\newcommand{\cyclic}[1]{\mathrel{ ( \mkern-3.85mu ( \mkern-6mu} {#1} \mathrel{\mkern-6mu ) \mkern-3.85mu)}}
\newcommand\restr[2]{{
		\left.\kern-\nulldelimiterspace 
		#1 
		\vphantom{\big|} 
		\right|_{#2} 
}}
\def\EE{\mathbb{E}} 
\def\CC{\mathbb{C}}
\def\HH{\mathbb{H}} 
\def\NN{\mathbb{N}} 
\def\PP{\mathbb{P}} 
\def\QQ{\mathbb{Q}} 
\def\RR{\mathbb{R}}
\def\ZZ{\mathbb{Z}} 
\def\SS{\mathbb{S}}
\def\TT{\mathbb{T}}
\def\CP{\CC\PP}
\DeclareMathOperator{\SL}{SL}
\DeclareMathOperator{\PGL}{PGL}
\DeclareMathOperator{\PU}{PU}
\def\D{\mathcal{D}}
\def\H{\mathcal{H}}
\def\N{\mathcal{N}}
\def\L{\mathcal{L}}
\def\M{\mathcal{M}}
\def\R{\mathcal{R}}
\def\T{\mathcal{T}}
\def\X{\mathcal{X}}
\def\Char{\mathfrak{X}}
\def\Charx{\Char^\times}
\def\ptorus{\TT_0}
\def\tri{\T}
\def\Ftess{F}
\def\Ftree{F^*}
\def\Hb{\overline{\H}}
\def\tetA{\mathfrak{T}_A}
\def\tetB{\mathfrak{T}_B}
\def\slab{\mathfrak{S}}
\def\cell{\D}
\def\sigmah{\hat{\sigma}}
\def\pih{\hat{\pi}}
\def\Cartan{\text{\AA}}
\DeclareMathOperator{\tr}{tr}
\DeclareMathOperator{\dev}{dev}
\DeclareMathOperator{\hol}{hol}
\DeclareMathOperator{\MCG}{MCG}
\def\bw{\mathbf{w}}
\def\n@te#1{\textsf{\boldmath \textbf{$\langle\!\langle$#1$\rangle\!\rangle$}}\leavevmode}
\def\note#1{\textcolor{red}{\n@te {#1}}}
\DeclareMathAlphabet{\mathcal}{OMS}{cmsy}{m}{n}
\theoremstyle{plain}
\newtheorem{thm}{Theorem}
\newtheorem*{thm*}{Theorem}
\newtheorem{lem}[thm]{Lemma}
\newtheorem*{lem*}{Lemma}
\newtheorem*{cor*}{Corollary}
\newtheorem*{cla*}{Claim}
\newtheorem*{pro*}{Proposition}
\newtheorem{rem}[thm]{Remark}
\newtheorem*{rem*}{Remark}
\newtheorem*{defn*}{Definition}
\newtheorem*{que*}{Question}
\begin{document}
\title{Branched Cauchy-Riemann Structures on Once-Punctured\\ Torus Bundles}
\author{Alex Casella}

\begin{abstract}
Unlike in hyperbolic geometry, the monodromy ideal triangulation of a hyperbolic once-punctured torus bundle $M_f$ has no natural geometric realisation in Cauchy-Riemann (CR) space. By introducing a new type of $3$--cell, we construct a different cell decomposition $\cell_f$ of $M_f$ that is always realisable in CR space. As a consequence, we show that every hyperbolic once-punctured torus bundle admits a branched CR structure, whose branch locus is the set of edges of $\cell_f$. Furthermore, we explicitly compute the ramification order around each component of the branch locus and analyse the corresponding holonomy representations.
\end{abstract}
\primaryclass{57M50; 32V05}
\keywords{Geometric Structures, Cauchy, Riemann, Torus Bundles, Ideal Triangulations, Branching}


\maketitle


\section{Introduction}
\label{sec:introduction}
A \emph{geometry} or \emph{geometric structure} $(G,X)$ is a homogeneous space $X$ together with a transitive action on $X$ by a Lie group $G$, which acts as the symmetry group of the geometry. This concept was originally introduced by Klein in his celebrated Erlangen program~\cite{Klein-1872}, and rapidly developed by Ehresmann~\cite{Ehresmann-1936} and many others afterwards. When $X$ and $G$ are chosen appropriately, one recovers many classical geometries like hyperbolic $(\text{SO}(1,n),\HH^n)$, Euclidean $(\RR^n \rtimes \text{O}(n),\EE^n)$ or spherical $(\text{O}(n+1), \SS^n)$ geometry. A $(G,X)$--manifold $M$ is a manifold endowed with a $(G,X)$--structure, namely an atlas of charts in the model space $X$, whose transition functions are restrictions of elements of $G$.

As more and more connections between topology and geometry were discovered, $(G,X)$--structures have become a central topic in the study of manifolds. Among many contributors, William Thurston is one of the most celebrated pioneers. In~\cite{Thurston-1979}, he develops a way to construct hyperbolic structures on cusped $3$--manifolds using \emph{ideal triangulations}, namely decompositions into tetrahedra whose vertices are removed. The strategy consists in realising these simple pieces as hyperbolic objects, that glue up coherently in the manifold $M$. Consistency of the gluings can be encoded in a system of complex valued equations, whose solutions correspond to hyperbolic structures on $M$. Since Thurston, many authors have studied and further developed his technique~(\cite{CFT-2017}, \cite{Danciger-2013}, \cite{Fock-Goncharov-2006}, \cite{Neumann-Zagier-1985}, \cite{Segerman-Tillmann-2011}, \cite{Yoshida-1991}, et al.). 

In two recent papers~(\cite{Falbel-2008}, \cite{Falbel-Wang-2014}), a similar strategy was employed to construct branched Cauchy-Riemann structures (CR in short) on the complement of the figure eight knot. CR geometry is modelled on the three-sphere $\SS^3 \subset \CC^2$, with the contact structure obtained by the intersection $Y = T\SS^3 \cap JT\SS^3$, where $J$ is the multiplication by $i$ in $\CC^2$ (see for example~\cite{Bland-1994}). The operator $J$ restricted to $Y$ defines the standard CR structure on $\SS^3$. Its group of CR automorphisms is $\PU(2,1)$, thus a manifold $M$ has a (spherical) CR structure when it is endowed with a geometric $(\PU(2,1), \SS^3)$--structure. The fact that every $3$--manifold admits a contact structure~\cite{Martinet-1971} suggests that CR geometry has the potential to play an important role in three dimensional topology. Nevertheless, only few examples of CR manifolds are known. Most of them are closed Seifert fibred manifolds~\cite{Kamishima-Tsuboi-1991} or obtained by Dehn surgery from the Whitehead link~\cite{Schwartz-2003,Schwartz-2007}. On the other hand, some examples of $3$--manifolds which have no CR structures are known~\cite{Goldman-1983}. 

Inspired by the work of Falbel in~\cite{Falbel-2008}, we deal with a more general notion of CR structures, by allowing branching. Charts are not diffeomorphisms anymore, but locally branched coverings. By relaxing this condition, one obtains a geometric structure whose developing map is locally injective everywhere except for a nowhere-dense set, the branch locus. The spaces we investigate here are \emph{once-punctured torus bundles}, orientable manifolds which are the interior of compact $3$--manifolds with boundary a torus. They are fiber bundles over the circle, with fiber space a once-punctured torus. The figure eight knot complement is one such example. Most of these manifolds are hyperbolic~\cite{Otal-1996}, and exhibit important combinatorial properties. In particular, Floyd and Hatcher showed that each hyperbolic once-punctured torus bundle admits a canonical realisation as an ideal triangulation, called the \emph{monodromy ideal triangulation}~\cite{Floyd-Hatcher-1982}. This type of triangulation is part of a larger class of fundamental triangulations called \emph{veering triangulations}, developed by Agol in~\cite{Agol-2011}. The importance of this decomposition relies on its rich combinatorial structure, but also on its geometric properties. For example, Lackenby showed it to be geometrically canonical in the sense of Epstein-Penner~\cite{Lackenby-2003}, while Gu\'eritaud used it to recover Thurston's hyperbolicity of once-punctured torus bundles~\cite{Futer-Gueritaud-2006}.

In this paper we modify the monodromy ideal triangulation of each once-punctured torus bundle to a new ideal cell decomposition, that is geometrically realisable in CR space, and whose set of edges constitutes the branch locus. This decomposition is made up of tetrahedra and $3$--cells that we call \emph{slabs}, CW~complexes obtained by deformation retracting the base of a square pyramid onto one of its sides. In the case of the figure eight knot complement, Falbel~\cite{Falbel-2008} uses one of these slabs implicitly, as part of a \emph{generalised tetrahedron}, but the CR structure thus constructed consists of charts that are not embeddings of the tetrahedra. In particular, there is a small neighbourhood of one edge in a tetrahedron that develops to a flat bigon. This is not an obstruction in Falbel's proof: he focuses on the union of the images of two specific charts and shows that its quotient by the face pairings is homeomorphic to the figure eight knot complement. This strategy is hard to generalise to other punctured torus bundles and it is somehow unnatural. For example, it is true only for the figure eight knot complement that the branch locus occurs precisely at the edges of the triangulation. This suggests the use of a more suitable cell decomposition, such that we can geometrically realise each ideal cell by embedding it in CR space. For this to work, six geometrically different types of slabs will be defined. Each construction is very explicit and calculations are done directly in coordinates in the CR sphere. A collection of the main results is summarised in the following theorem.

\begin{thm}
\label{thm:intro}
	Let $M_f$ be a hyperbolic once-punctured torus bundle. Then $M_f$ admits an ideal cell decomposition $\cell_f$ that is geometrically realisable in CR space. It corresponds to a branched CR structure, whose branch locus is the set of edges of $\cell_f$.
	
	Moreover, the ramification order around each edge $e$ only depends on the valence of $e$ in $\cell_f$, and it is explicitly computable.
\end{thm}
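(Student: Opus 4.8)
The plan is to build the cell decomposition $\cell_f$ by surgering the monodromy ideal triangulation, then realise each cell geometrically in $\SS^3$ and verify the gluings are CR. First I would recall the combinatorics of the monodromy ideal triangulation of $M_f$: writing the monodromy $f$ (up to conjugacy) as a word in the two standard generators $R,L$ of the mapping class group of the once-punctured torus, one gets a layered triangulation, with one tetrahedron per letter, glued in a cyclic ``fan'' pattern along the edges of successive Farey triangles. The key structural move is the following: whenever the word has a syllable $R^k$ or $L^k$ with $k\ge 2$ (equivalently, whenever consecutive letters agree), the stack of tetrahedra realising that syllable cannot be realised by CR-embedded tetrahedra, because the CR cross-ratio parameter is forced to a degenerate value; the remedy is to replace each such offending tetrahedron (or consecutive pair) by a slab — a square pyramid with its base collapsed onto one lateral face. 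I would make precise how many slabs appear and of which of the six types, purely in terms of the combinatorics of the word representing $f$, and check that the resulting $\cell_f$ is still an ideal cell decomposition of the same manifold $M_f$ (same underlying space, just a different CW structure obtained by an explicit subdivision/collapse), so that no topology is lost.

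Next I would carry out the geometric realisation, cell by cell, directly in coordinates on $\SS^3\subset\CC^2$ (using, say, the Heisenberg model or Siegel domain boundary with its standard contact structure and $\PU(2,1)$-action). For a tetrahedron the recipe is the usual one: pick four points on $\SS^3$ in general position, take the three CR-geodesic faces they span, and record the single complex CR parameter; I would exhibit, for each tetrahedron surviving in $\cell_f$, an explicit choice making the chart an embedding whose image has non-degenerate faces. For each of the six slab types I would write down explicit vertex data and the deformation-retracted square pyramid it bounds, and check that the retraction can be performed inside $\SS^3$ so the slab embeds with the prescribed face identifications on its boundary. Then comes the gluing: assemble all these pieces along the face-pairing pattern inherited from the monodromy triangulation (modified by the slab substitutions), and verify that on each shared face the two charts differ by an element of $\PU(2,1)$ — this is where the coordinates were chosen to make the matching transformations land in the group. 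Completing this gives a developing map that is an embedding on the interior of every cell and on the interior of every face, hence a local branched covering whose branch locus is contained in the $1$-skeleton.

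To finish the theorem I would compute, around each edge $e$ of $\cell_f$, the total cone angle swept out by the images of the cells incident to $e$ as one goes once around $e$. Because each incident $2$-cell/$3$-cell corner contributes a definite ``wedge'' whose angle is determined by the local model (a tetrahedron corner or a slab corner), the sum over the link of $e$ is $2\pi m_e$ for an integer $m_e$ that depends only on how many cells meet at $e$ and of what type — i.e.\ only on the valence of $e$ in $\cell_f$ — and the ramification order of the branch locus at $e$ is exactly this $m_e$. Since the valence of every edge of the monodromy triangulation, and the effect of the slab substitutions on it, are read off combinatorially from the word representing $f$, the ramification orders are explicitly computable. The holonomy representation $\rho\colon\pi_1(M_f)\to\PU(2,1)$ is then assembled from the face-pairing group elements, and its properties (image, boundary holonomy) follow by tracking these explicit matrices.

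The main obstacle I expect is the slab realisation and gluing compatibility: defining six geometrically distinct slab types so that (a) each embeds in $\SS^3$ as an honest CW cell with the claimed boundary combinatorics, and (b) every face-pairing required by the monodromy pattern — between two slabs, or a slab and a tetrahedron, in all the adjacency configurations that actually occur — is realised by a single element of $\PU(2,1)$. This is a finite but delicate coordinate bookkeeping problem, and getting a uniform choice that works for every word in $R$ and $L$ simultaneously (rather than case-by-case per bundle) is the crux; the edge-angle count and the holonomy analysis are comparatively mechanical once the realisation is in hand.
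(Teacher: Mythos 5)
Your proposal contains a genuine gap in the construction of the cell decomposition $\cell_f$, and the gap is fatal to the argument. You claim that slabs should \emph{replace} the ``offending'' tetrahedra coming from syllables $\R^k$ or $\L^k$ with $k\ge 2$, so that a flip word with no repeated letters would yield a $\cell_f$ consisting of tetrahedra only. The paper's figure-eight knot complement refutes this directly: its flip sequence is $\bw_8 = \R\L$ (no repeated letter, cyclically), yet the paper's $\cell_8$ contains a slab $\sigmah_0^S$, and in fact Falbel's original realisation by two tetrahedra alone fails to be a branched structure with branch locus on the edges precisely because one tetrahedron corner develops into a flat bigon. The paper's actual rule is uniform and applies to \emph{every} tetrahedron: each type-$\R$ tetrahedron $\sigma_j^\R$ is \emph{subdivided} into a tetrahedron $\sigmah_j^\R$ plus a slab $\sigmah_j^S$ (step (1) of \S\ref{subsec:general_case}), and each type-$\L$ tetrahedron $\sigma_j^\L$ is subdivided into a tetrahedron $\sigmah_j^\L$ plus a wedge $\sigmah_j^W$, after which the wedge is collapsed and removed (steps (2)--(3)). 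So $\cell_f$ has one more $3$--cell than $\tri_f$ for each $\R$ in the word, not one fewer, and the slabs appear \emph{alongside} the tetrahedra rather than in place of them. Your heuristic about the ``CR cross-ratio parameter forced to a degenerate value'' also does not match the paper's mechanism: the obstruction is the flattening of a wedge cell at each $\L$, not a degenerate shape parameter along repeated syllables.

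A second, smaller mismatch: you describe the geometric realisation of each tetrahedron as ``record the single complex CR parameter'' and exhibit ``an explicit choice making the chart an embedding,'' suggesting a continuous moduli of shapes. The paper instead uses two \emph{fixed} standard symmetric tetrahedra $\tetA$ and $\tetB$ (with prescribed Cartan angle $\pi/3$ on every face) and six fixed slabs $\slab_0,\dots,\slab_5$, with the choice among $\tetA/\tetB$ dictated by whether the cell below is of type $\L$ or $\R$, and the choice of slab $\slab_{k_j}$ dictated by the valence $k_j+4$ of $\pih(\sigmah_j^S(24))$. The six slab types arise because the face pairings $G_1,\dots,G_4$ all have order six, not because there are six distinct local ``offending configurations.'' Your description of the final angle-counting step is at the right level of generality and roughly matches \S\ref{subsec:branch_locus}, but it cannot be carried out on the incorrect $\cell_f$, so the proposal as written does not establish the theorem.
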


The construction presented in this paper has the potential to further extend to more general punctured surface bundles, as they also admit layered triangulations. Although the realisability of the cell decomposition $\cell_f$ seems to rely on the fact that the base surface is a once-punctured torus, we intend to address this problem in future work using the veering triangulations of Agol~\cite{Agol-2011}.\\

The content of this paper is organised as follows. In sections~\ref{sec:once_punct_torus_bundles} and \ref{sec:monodromy_ideal_triangulation}  we review background material on once-punctured torus bundles and monodromy ideal triangulations. They mostly serve to set notations and underline the most relevant properties. CR geometry is covered in \S\ref{sec:CR_geometry}. There we define CR tetrahedra and slabs, the two fundamental $3$--cells which will be the building blocks of the CR structures in \S\ref{sec:CR_structure_bundles}. Section~\ref{sec:CR_structure_bundles} is the core of the paper, where we introduce the notion of branched CR structures and prove Theorem~\ref{thm:intro}, first in the explicit case of the figure eight knot, then in the general case for all once-punctured torus bundles. We conclude by computing ramification orders of the branch locus and a brief analysis of the holonomy representations in \S\ref{sec:property_structures}. In particular, the very last section~\ref{subsec:holonomy} is a summary of some facts about the holonomy representations and the connection to the work of Fock and Goncharov on positive representations~\cite{Fock-Goncharov-2006}, mostly for experts.


\section{Once-punctured torus bundles}
\label{sec:once_punct_torus_bundles}

Let $\ptorus := \left(\RR^2 \setminus \ZZ^2\right) / \ZZ^2$ be the once-punctured torus endowed with its standard differential structure and standard orientation. The \emph{mapping class group} of $\ptorus$ is the group $\MCG = \MCG(\ptorus)$ of isotopy classes of orientation preserving diffeomorphisms $f : \ptorus \rightarrow \ptorus$. For $[f] \in \MCG$, the \emph{once-punctured torus bundle} $M_f$ is the differentiable oriented $3$--manifold
$$
M_f := \ptorus \times [0,1] / \sim,
$$
where $(x,0) \sim (f(x),1)$ for $x \in \ptorus$. The manifold $M_f$ is a special fiber bundle over the circle, with fiber space $\ptorus$, well-defined up to diffeomorphism.

The natural identification of $\ptorus$ with the square spanned by the standard basis of $\RR^2$ induces an isomorphism $\MCG \cong \SL(2,\ZZ)$, hence each map $[f] \in \MCG$ has well-defined eigenvalues in $\CC$ (cf.~\cite{Farb-Margalit-2012}). This characterisation is fundamental to study the geometry of $M_f$, as for example it helps discerning hyperbolic bundles.

\begin{thm}[Thurston, 1996~\cite{Otal-1996}]
	\label{thm:hyperbolic_torus_bundles}
	$M_f$ admits a finite volume, complete hyperbolic metric if and only if $[f]$ has two distinct real eigenvalues.
\end{thm}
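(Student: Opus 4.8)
The plan is to read this statement as the conjunction of the Nielsen--Thurston classification of mapping classes of $\ptorus$ with Thurston's hyperbolisation theorem for surface bundles over the circle. The first step is to translate the eigenvalue hypothesis into dynamical terms. Under the isomorphism $\MCG \cong \SL(2,\ZZ)$, a class $[f]$ of trace $t \in \ZZ$ has eigenvalues equal to the roots of $\lambda^2 - t\lambda + 1$; these are real and distinct precisely when $t^2 > 4$, i.e. $|t| \ge 3$, they coincide when $|t| = 2$, and they are non-real when $|t| \le 1$. Comparing with the classification (see~\cite{Farb-Margalit-2012}), $|t| \ge 3$ is exactly the condition that $[f]$ is pseudo-Anosov, $|t| = 2$ with $f$ of infinite order is the reducible (parabolic) case, and the periodic classes are those of finite order, namely $f = \pm I$ together with all cases $|t| \le 1$. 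Deleting the (linearly fixed) puncture of the torus does not affect this trichotomy.

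For the implication ``$\Leftarrow$'', assume $|t| \ge 3$, so $[f]$ is pseudo-Anosov. Then $M_f$ is the interior of a compact orientable $3$--manifold $\overline{M_f}$ whose boundary is the mapping torus of $f$ on a punctured neighbourhood of the cusp; since $f$ fixes the puncture and $M_f$ is orientable, this boundary is a single torus. The fibre $\ptorus$ has $\chi(\ptorus) = -1 < 0$, so Thurston's hyperbolisation theorem for fibred $3$--manifolds --- which is precisely the content of~\cite{Otal-1996} --- endows $M_f$ with a complete, finite-volume hyperbolic metric.

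For ``$\Rightarrow$'', suppose $|t| \le 2$, so $[f]$ is periodic or reducible, and rule out hyperbolicity in each case. If $[f]$ is periodic, choose $n \ge 1$ with $f^n = \identity$; then the mapping torus of $f^n$, namely $\ptorus \times \SS^1$, is an $n$--fold cover of $M_f$. A finite cover of a finite-volume hyperbolic $3$--manifold is again one, but $\ptorus \times \SS^1$ is not hyperbolic: its fundamental group $F_2 \times \ZZ$ contains a $\ZZ^2$ (e.g. $\langle a \rangle \times \ZZ$ for a free generator $a$) that is not conjugate into the peripheral subgroup $\langle [a,b] \rangle \times \ZZ$, whereas in a cusped hyperbolic $3$--manifold every $\ZZ^2$ is peripheral. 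If instead $[f]$ is reducible and of infinite order, it fixes the isotopy class of an essential simple closed curve $\gamma \subset \ptorus$; sweeping $\gamma$ once around the bundle yields an embedded torus in $M_f$ (the boundary of a regular neighbourhood of the resulting Klein bottle, if $\gamma$ is preserved orientation-reversingly), and one checks it is incompressible and not boundary-parallel. Thus $M_f$ is toroidal, hence not hyperbolic.

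The main obstacle is concentrated entirely in the ``$\Leftarrow$'' direction: it is Thurston's hyperbolisation of fibred $3$--manifolds, a deep theorem which I would simply invoke through~\cite{Otal-1996} rather than reprove. The ``$\Rightarrow$'' direction is comparatively elementary, the only external input being the structural fact --- following from Mostow--Prasad rigidity and the geometry of cusps --- that all $\ZZ^2$ subgroups of a finite-volume hyperbolic $3$--manifold group are peripheral; alternatively one could invoke geometrisation to say that a manifold which is finitely covered by a Seifert fibred space, or which contains an essential torus, carries no hyperbolic metric.
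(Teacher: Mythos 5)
Your proposal is correct and matches the paper's brief discussion following the theorem statement: both reduce the eigenvalue hypothesis to a trace trichotomy in $\SL(2,\ZZ)$, invoke Thurston's hyperbolisation of fibred $3$--manifolds (via Otal) for the pseudo-Anosov direction, and rule out hyperbolicity in the remaining cases by producing a Seifert fibred or toroidal obstruction. The only differences are cosmetic --- the paper asserts ``Seifert fibred'' directly for the finite-order case rather than passing through your finite-cover-of-$\ptorus \times \SS^1$ argument, and it additionally points to Futer--Gu\'eritaud for an elementary constructive alternative to invoking full hyperbolisation in the pseudo-Anosov case.
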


The element $[f]$ has distinct real eigenvalues if and only if $(\tr [f])^2 > 4$. If the trace is in $\{ -1,0,1\}$, then $[f]$ has finite order and $M_f$ is Seifert fibred. While if $\tr [f] = \pm 2$, then $f$ preserves a non-trivial simple closed curve in the punctured torus, which defines an incompressible torus or Klein bottle in $M_f$. In both cases we get an obstruction to the existence of the hyperbolic metric. An elementary and constructive proof of the other cases can be found in~\cite{Futer-Gueritaud-2006}.


\section{The monodromy ideal triangulation}
\label{sec:monodromy_ideal_triangulation}

In this section we recall the canonical realisation of a hyperbolic once-punctured torus bundle $M_f$ as an ideal triangulation, as described by Floyd and Hatcher in~\cite{Floyd-Hatcher-1982}, called the \emph{monodromy ideal triangulation} of $M_f$. For $M_f$ hyperbolic, Theorem~\ref{thm:hyperbolic_torus_bundles} implies that the eigenvalues of $[f]$ are distinct with the same sign. To simplify the construction, we are going to make the further assumption that the eigenvalues are positive. This will not cause any loss of generality: if $[f]$ has two negative eigenvalues, then $[-f]$ has positive eigenvalues, and the monodromy triangulation of $M_f$ can be easily deduced from the monodromy triangulation of $M_{-f}$. See Remark~\ref{rem:negative_eigenvalues_mon_tri} for more details.


\subsection{Flip sequence}
\label{subsec:flip_sequence}

An \emph{ideal triangulation} $\tri$ of $\ptorus$ is a maximal collection of pairwise disjoint and non-homotopic (relative the puncture) essential arcs. Every ideal triangulation of $\ptorus$ comprises three essential arcs, called \emph{ideal edges}, and divides the surface into two \emph{ideal triangles}. All of these ideal triangulations are combinatorially equivalent, but they can be distinguished by that they are not isotopic via an isotopy fixing the puncture.

Without loss of generality, one can assume that ideal triangulations of $\ptorus$ are \emph{straight}, in the sense that each ideal edge is the intersection with $\ptorus$ of the quotient of a straight line through the origin in $\RR^2$. In a straight triangulation $\tri$, the \emph{slope} of an edge is the slope of the corresponding straight line. Since edges start and terminate at the puncture, their slopes must be rational, hence there is a bijection between ideal edges and $\QQ \cup \{\infty\}$.

The set of isotopy classes of ideal triangulations can be encoded as the vertices of the \emph{Farey tree} $\Ftree$. This tree is dual to the \emph{Farey tessellation} $\Ftess$ (cf. Figure~\ref{fig:Farey_tessellation}), a tessellation of the hyperbolic plane by ideal triangles. The ideal vertices of this tessellation are the set of slopes of ideal edges $\QQ \cup \{\infty\}$ in the circle at infinity. In particular, the ideal vertices of a triangle in $\Ftess$ correspond to the slopes of three disjoint non-homotopic properly embedded arcs in $\ptorus$, and hence to an ideal triangulation. Thus, there is one vertex of the dual tree $\Ftree$ for each isotopy class of ideal triangulation of the once-punctured torus, and every such ideal triangulation is uniquely determined by a triplet of slopes satisfying the Farey sum. A beautiful treatment of this topic can be found in~\cite{Bonahon-2009}.

By adopting the convention that $0$ and $\infty$ are neither negative nor positive, we say that an ideal triangulation is \emph{positive} (resp. \emph{negative}) if at least one if its slopes is positive (resp. negative). The \emph{standard positive} (resp. \emph{negative}) \emph{ideal triangulation} of $\ptorus$ is the triangulation $\tri_+$ (resp. $\tri_-$) with slopes $\{0,1,\infty\}$ (resp. $\{0,-1,\infty\}$).

Two vertices of the dual tree $\Ftree$ are joined by an edge if and only if their corresponding ideal triangulations differ by a single slope. Passing from one triangulation to the other is usually called \emph{edge flipping}, as it involves removing one edge, resulting in a square with side identifications, and then inserting the other diagonal of the square. As $\Ftree$ is a tree, every two ideal triangulations of $\ptorus$ differ by a unique minimal sequence of edge flips.

Edge flips are of three types, depending on the slope we are flipping over. A \emph{right flip} $\R$ (resp. \emph{left flip} $\L$) is an edge flip of the largest (resp. smallest) slope. The remaining flip will be referred to as a \emph{middle flip $\M$}. For example, starting from the standard positive triangulation $\{0,1,\infty\}$ of $\ptorus$, a right flip produces the triangulation $\{0,\frac{1}{2},1\}$, a left flip gives $\{1,2,\infty\}$, and a middle flip gives $\{0,-1,\infty\}$.

One can visualise the dynamics of edge flips on the dual tree $\Ftree$ as follows. Let $\tri_{m}$ be a positive ideal triangulation (different from the standard one) and let $\tri_+,\tri_1,\dots,\tri_{m-1}$ be the sequence of triangulations along the unique shortest path between the standard positive triangulation and $\tri_m$. By definition, a middle flip kills the middle slope, hence it corresponds to a back-track towards $\tri_+$ and transforms $\tri_m$ into $\tri_{m-1}$, contradicting the minimality of the path. If you exclude back-tracking, one can move along $\Ftree$ in only two other ways, corresponding to a right or left flip. By orienting the hyperbolic plane with its standard positive orientation, a right (resp. left) flip corresponds exactly to turning right (resp. left) at $\tri_m$ (cf. Figure~\ref{fig:Farey_tessellation}). A perfectly analogous arguments works if we replace $\tri_{m}$ with a negative ideal triangulation.

\begin{figure}[ht]
	\centering
	\includegraphics[height=6.8cm]{./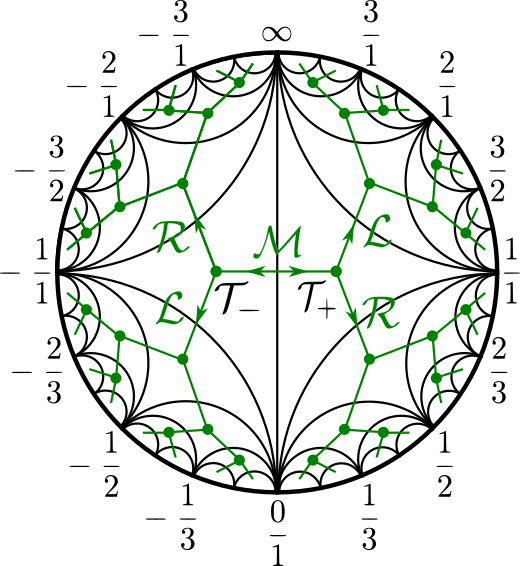}
	\caption{The Farey tree is dual to the Farey tessellation of the hyperbolic plane. Every vertex corresponds to an ideal triangulation of the once-punctured torus, and every edge corresponds to an edge flip.}
	\label{fig:Farey_tessellation}
\end{figure}

The following lemma is a direct consequence of the above discussion.

\begin{lem}
	\label{lem:solo_right_left_flips}
	Let $\tri$ be a positive (resp. negative) ideal triangulation different from the positive (resp. negative) standard one $\tri_0$. The unique sequence of edge flips from $\tri_0$ to $\tri_m$ does not contain any middle flips. Conversely, the sequence of flips from $\tri_m$ to $\tri_0$ only contains middle flips.
\end{lem}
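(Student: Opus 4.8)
The plan is to recast the statement in terms of the Farey tessellation $\Ftess$ and its dual tree $\Ftree$, and to pin down, at each vertex of $\Ftree$, which of the three flips moves \emph{towards} the standard triangulation $\tri_0$. I would first isolate two elementary facts about $\Ftess$: (i) no triangle of $\Ftess$ has one positive and one negative ideal vertex, since if $p/q$ and $-r/s$ are in lowest terms with $p,q,r,s\ge 1$ then $|p\,s-q(-r)| = p\,s + q\,r \ge 2$, so these two points are never joined by an edge; and (ii) the only edge of $\Ftess$ with both endpoints in $\{0,\infty\}$ is $\{0,\infty\}$ itself, whose two adjacent triangles are $\tri_+ = \{0,1,\infty\}$ and $\tri_- = \{0,-1,\infty\}$. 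From (i), the positive and the negative triangulations partition the vertices of $\Ftree$; from (i)--(ii), the unique edge of $\Ftree$ joining a positive triangulation to a negative one is the one realising the middle flip at $\tri_0$. Deleting it splits $\Ftree$ into the subtree of positive triangulations and the subtree of negative ones, so the geodesic in $\Ftree$ from $\tri_0$ to $\tri_m$ stays among triangulations of one sign, and I may work inside that subtree, regarded as rooted at $\tri_0$.

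Next I would match the three flips at a (say positive) triangulation $\tri' = \{a,b,c\}$ with the edge crossings of $\tri'$ in $\Ftess$. Since $\tri'$ lies in the closed arc from $0$ to $\infty$ through the positive rationals, its slopes are totally ordered; write $a < b < c$, with the convention that $\infty$ sits at the top of that arc. Flipping a slope of $\tri'$ replaces the corresponding edge of $\Ftess$ by the opposite diagonal of the square it bounds, i.e.\ crosses that edge into the neighbouring triangle; hence the right flip $\R$ (flip of the largest slope $c$) crosses the edge $\{a,b\}$, the left flip $\L$ (flip of the smallest slope $a$) crosses $\{b,c\}$, and the middle flip $\M$ (flip of $b$) crosses $\{a,c\}$. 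Because $a < b < c$, the triangle $\tri'$ lies over the interval $(a,c)$ relative to the geodesic $\{a,c\}$, whereas the edges $\{a,b\}$ and $\{b,c\}$ sit over the strictly smaller sub-intervals $(a,b)$ and $(b,c)$; by the usual nesting of the Farey subdivision, crossing $\{a,c\}$ therefore moves to the parent of $\tri'$ in the rooted subtree, while crossing $\{a,b\}$ or $\{b,c\}$ moves to one of its two children. Equivalently, the middle flip of any positive triangulation $\ne \tri_0$ decreases its $\Ftree$--distance to $\tri_0$ by one, while the right and left flips each increase it by one.

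With this the lemma follows at once. The geodesic $\tri_0, \tri_1, \dots, \tri_m$ is the descent from the root to $\tri_m$ in the rooted subtree, so each step $\tri_{j-1} \to \tri_j$ passes from a vertex to one of its children and is therefore a right or a left flip; no middle flip occurs. Reading the same path backwards, each step $\tri_j \to \tri_{j-1}$ passes from a vertex to its parent, which by the previous paragraph is precisely its middle flip, so the sequence of flips from $\tri_m$ to $\tri_0$ consists only of middle flips. The negative case is identical after swapping $\tri_+$ for $\tri_-$ and using the arc through the negative rationals, with $\infty$ now placed at the other end of that arc.

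The one genuinely fiddly step is the matching in the second paragraph: correctly pairing the flip of a \emph{slope} of a once-punctured-torus triangulation with the crossing of the \emph{right} edge of $\Ftess$, and handling the boundary cases $a = 0$ or $c = \infty$, where the total order on slopes must use the convention placing $\infty$ at the far end of the relevant arc. This is, however, just the standard dictionary between ideal triangulations of $\ptorus$, Farey triangles, and the Stern--Brocot structure of $\Ftree$, so it amounts to careful bookkeeping rather than a real obstacle.
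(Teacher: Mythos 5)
Your proof is correct and takes essentially the same route as the paper: pass to the dual Farey tree, split it into the positive and negative halves along the unique $\tri_+$--$\tri_-$ edge, and observe that at every non-root vertex the middle flip moves towards the root while the right and left flips move away. The only real difference is one of rigour — the paper simply asserts, with an appeal to its picture of the Farey tessellation, that ``a middle flip kills the middle slope, hence it corresponds to a back-track towards $\tri_+$,'' whereas you actually justify this step via the flip/edge-crossing dictionary and the nesting of Farey intervals, which is precisely the point the paper treats as self-evident.
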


Let $f : \ptorus \rightarrow \ptorus$ be a diffeomorphism of the once-punctured torus. The map $f$ acts transitively on the set of ideal triangulations of $\ptorus$, inducting an isomorphism of the Farey tree $\Ftree$. Every isomorphism of a simplicial tree has either a fixed point, or leaves invariant a unique copy of $\RR$, called \emph{axis}. The former case happens when $\tr([f]) \in \{-1,0,1\}$ and the action is periodic. In the latter case, let $V_0$ be a vertex on the axis. The unique shortest path in $\Ftree$ from $V_0$ to $f(V_0)$ runs along the axis, and naturally corresponds to a sequence of edge flips. When $\tr([f])^2 = 4$, the axis has a unique endpoint on the boundary of the hyperbolic plane, and the action is parabolic. Finally we observe that $-f$ acts on $\Ftree$ as $f$, hence we will only consider automorphisms with distinct positive real eigenvalues. 

After conjugating $f$, one can assume that $V_0$ corresponds to the standard positive ideal triangulation $\tri_0$ and the axis does not run through any negative triangulation. It follows from Lemma~\ref{lem:solo_right_left_flips} that $f(\tri_0)$ differs from $\tri_0$ by a unique sequence $\bw_f$ of right $\R$ and left $\L$ flips. Furthermore, when the eigenvalues of $f$ are distinct, $\bw_f$ always contains at least one right flip and one left flip. In other words, there exist $a_j,b_j,k \in \NN$ and $c \in \NN \cup \{0\}$ such that
$$
\bw_f = \R^{a_0} \L^{b_0} \dots \R^{a_k} \L^{b_k} \R^c \qquad \text{ or } \qquad \bw_f = \L^{a_0} \R^{b_0} \dots \L^{a_k} \R^{b_k} \L^c.
$$
We say that $\bw_f$ is the \emph{flip sequence} of $f$ or of $M_f$. Its \emph{length} is the total number of edge flips, namely $c + \sum_{j=0}^{k} (a_j+b_j)$. Under the canonical isomorphism $\MCG(\ptorus) \cong \SL_2(\ZZ)$, a right flip and a left flip correspond to the matrices
$$
[f_\R] = 
\begin{pmatrix}
1 & 1\\
0 & 1
\end{pmatrix} \qquad \text{ and } \qquad
[f_\L] = 
\begin{pmatrix}
1 & 0\\
1 & 1
\end{pmatrix}.
$$


\subsection{The triangulation}
\label{subsec:the_triangulation}

The following description of the monodromy ideal triangulation is adapted from~\cite{Futer-Gueritaud-2006}.\\

The \emph{standard ideal tetrahedron} $\sigma$ is, topologically, a compact tetrahedron with its vertices removed. One can picture $\sigma$ as a square with its two diagonals, as in Figure~\ref{fig:standard_tetrahedron}. Oriented simplices of $\sigma$ are determined by an ordering of the vertices, hence we refer to them by the notation $\sigma(i),\sigma(ij),\sigma(ijk),\sigma(ijkl)$. Sometimes we use the same notation for the unoriented counterparts, but only when it is clear from the context that we ignore the orientation. By identifying the pair of opposite edges $\sigma(13),\sigma(24)$ and $\sigma(12),\sigma(34)$, the exterior of $\sigma$ becomes the union of two \emph{pleated surfaces}, homeomorphic to the once-punctured torus $\ptorus$. The \emph{top} pleated surface $\sigma(\ptorus)_+$ is made up of the two ideal triangles $\sigma(143), \sigma(124)$, while the \emph{bottom} pleated surface $\sigma(\ptorus)_-$ is made up of the two ideal triangles $\sigma(123), \sigma(324)$. Thus the ideal triangulation of $\sigma(\ptorus)_+$ is obtained from $\sigma(\ptorus)_-$ by an edge flip along $\sigma(23)$.

Suppose $\ptorus$ is endowed with some ideal triangulation $\tri$. We say that the tetrahedron $\sigma$ \emph{layers on} $\ptorus$ if the bottom pleated surface of $\sigma$ is glued to $\ptorus$ via an orientation-preserving combinatorial isomorphism, called the \emph{layering}. Let $e$ be an oriented edge of $\tri$. We say that $\sigma$ layers on $\ptorus$ \emph{along} $e$ if the chosen layering identifies $e$ with the edge $\sigma(23)$. In general, there are six possible ways to layer $\sigma$ on $\ptorus$, one for each oriented edge of $\tri$. To simplify the notation we make a further distinction. We say that a layering of $\sigma$ is a (\emph{right}) $\R$ \emph{layering} (resp. (\emph{left}) $\L$ \emph{layering}) if $\sigma$ layers along the edge with largest (resp. smallest) slope, oriented towards (resp. away from) the origin in $\ptorus$. The motivation behind this notation is clear: if $\sigma$ right layers (resp. left layers) on $\ptorus$, the ideal triangulations of $\sigma(\ptorus)_+$ is obtained from $\sigma(\ptorus)_-$ by a right flip (resp. left flip).\\

{\noindent
	\begin{minipage}[t]{0.62\textwidth}
		\centering
		\includegraphics[height=3cm]{./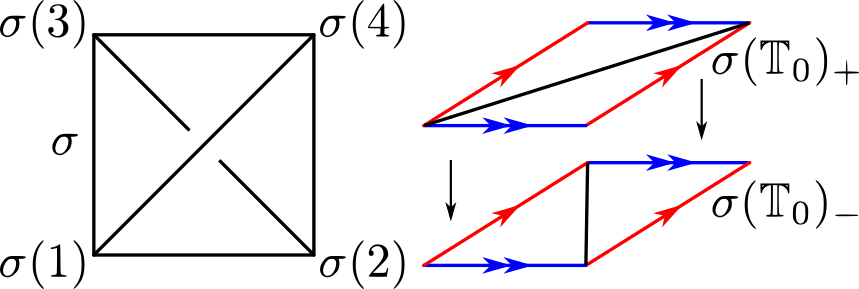}
		\captionof{figure}{The standard ideal tetrahedron $\sigma$ and the two pleated surfaces $\sigma(\ptorus)_+$ and $\sigma(\ptorus)_-$.}
		\label{fig:standard_tetrahedron}
	\end{minipage}\hspace{0.3cm}
	\begin{minipage}[t]{0.30\textwidth}
		\centering
		\includegraphics[height=3.3cm]{./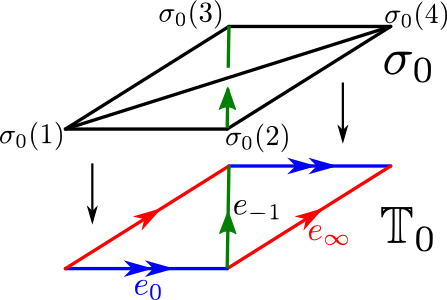}
		\captionof{figure}{A layering of the standard ideal tetrahedron along the edge $e_{-1}$.}
		\label{fig:tetrahedron_layering}
\end{minipage} }\vspace{0.5cm}

Let $f$ be an element of $\SL_2(\ZZ)$ with two distinct positive real eigenvalues and let $\bw_f$ be the flip sequence of $f$. Suppose $\bw_f$ has length $m$. Now we describe how to construct the monodromy triangulation of the hyperbolic once-punctured torus bundle $M_f$. Suppose $\ptorus$ is endowed with its negative standard ideal triangulation $\{0,-1,\infty\}$. Let $\sigma_0$ be a copy of the standard ideal tetrahedron layered on $\ptorus$ along the edge of slope $-1$, oriented as in Figure~\ref{fig:tetrahedron_layering}. Then the top pleated surface $\sigma_0(\ptorus)_+$ is triangulated as the positive standard ideal triangulation $\tri_0$. For each letter $\X_j$ in $\bw_f$, $j = 1,\dots,m$, reading from left to right, we perform an $\X_j$ layering of a copy of the standard ideal tetrahedron $\sigma_j$ on $\sigma_{j-1}(\ptorus)_+$. The space obtained by stacking these tetrahedra is naturally homeomorphic to $\ptorus \times I$. The last top pleated surface is $\sigma_{m}(\ptorus)_+$. Its triangulation $\tri_m$ is obtained from $\tri_0$ by performing the sequence of edge flips $\bw_f$. It follows that $\tri_m = f(\tri_0)$, and $f$ induces an identification between $\sigma_0$ and $\sigma_m$ which makes $\ptorus \times I$ into $M_f$. The \emph{monodromy triangulation} of $M_f$ is the ideal triangulation consisting of the tetrahedra $\sigma_0,\dots,\sigma_{m-1}$ and the face pairings inherited from the layering construction. As an example, see the monodromy ideal triangulation of the figure eight knot complement in \S\ref{subsec:figure_eight_complement}.

\begin{rem}
	\label{rem:negative_eigenvalues_mon_tri}
	We remark that $f$ and $-f$ act in the same way on the Farey tree, hence they share the same flip sequence. It follows that the monodromy triangulation of $M_{-f}$ differs from the one of $M_f$ only in the way $\sigma_0$ and $\sigma_m$ are identified. More precisely, one can construct $M_{-f}$ by composing the identification $f$ between $\sigma_0$ and $\sigma_m$ with a rotation by the angle $\pi$.
\end{rem}

The layering construction induces a natural cyclic ordering of the tetrahedra, thus they will often be indexed modulo $m$. Similarly, one should think of the flip sequence $\bw_f$ as a cyclic word, with a preferred starting point. For future reference, we introduce the following notation. A tetrahedron $\sigma_j$ of the monodromy triangulation is said to be of \emph{type} $\R$ (resp. \emph{type} $\L$) if the \emph{next} tetrahedron $\sigma_{j+1}$ is layered on top of it by a right (resp. left) layering. We will sometimes record the type of $\sigma_j$ by writing $\sigma_j^\R$ or $\sigma_j^\L$.


\subsection{Combinatorics around the edges}
\label{subsec:combinatorics_edges}

Let $\tri$ be the monodromy ideal triangulation of the once-punctured torus bundle $M_f$, and let $m$ be the length of its flip sequence $\bw_f$. Then $\tri$ is made up of $m$ tetrahedra $\sigma_0,\dots,\sigma_{m-1}$, glued together by the layering construction. We denote by $\pi$ the natural quotient map $\pi : \sqcup_j \sigma_j \rightarrow \tri \cong M_f$, defined by the face pairings. The space $M_f$ is the interior of a compact $3$--manifold with torus boundary, so its Euler characteristic is zero. It follows that $\tri$ has as many edges as tetrahedra, namely $m$. Nevertheless, each edge may be represented by multiple edges in each tetrahedron. The \emph{valence} of an edge is the size of its inverse image under $\pi$.

We are now going to describe the local structure of the edges in $\tri$. This will be useful in the analysis of the geometry around the edges in~\S\ref{sec:CR_structure_bundles}. We recall that each tetrahedron $\sigma_j$ is a copy of the standard ideal tetrahedron $\sigma$ via a canonical identification, hence it inherits labels at the vertices from $\sigma$.

Consider the edge $\sigma_0(14)$ of $\sigma_0$, and let $e_0 := \pi(\sigma_0(14) )$ in $\tri$. Suppose that $\sigma_0 = \sigma_0^\L$ is of type $\L$. Let $\sigma_1^\R,\dots,\sigma_{n_0}^\R$, $n_0 \geq 0$, be the (possibly empty) sequence of tetrahedra of type $\R$ layered on top of $\sigma_0^\L$, such that $\sigma_{n_0+1}^\L$ is of type $\L$. This sequence corresponds to a subsequence $\L \R^{n_0} \L$ in the word $\bw_f$ (thought of as a cyclic word). By definition, $\sigma_1$ left layers on $\sigma_0$, thus $\sigma_1(12),\sigma_1(34) \in \pi^{-1}(e_0)$. For every $2 \leq j \leq n_0+1$, the simplex $\sigma_j$ right layers on $\sigma_{j-1}$, therefore $\sigma_j(12),\sigma_j(34) \in \pi^{-1}(e_0)$. Finally, $\sigma_{n_0+2}$ left layers on $\sigma_{n_0+1}$, closing up the sequence of tetrahedra around $e_0$ with the edge $\sigma_{n_0+2}(23)$. Locally around $e_0$, the tetrahedra $\sigma_0,\dots,\sigma_{n_0+2}$ glue to form a \emph{ribbon}, where $\sigma_0$ and $\sigma_{n_0+2}$ appear once, while every other tetrahedron appears twice. See Figure~\ref{fig:edge_section_L} for a cross section of a neighbourhood of $e_0$. The simplex $\sigma_0$ (resp.~$\sigma_{n_0+2}$) is the \emph{bottom} (resp. \emph{top}) of the ribbon, and every other simplex $\sigma_j$ constitutes a \emph{loop} on each side. We deduce that the valence of $e_0$ is $2n_0 + 4$.

An analogous picture arises when we assume that $\sigma_0$ is of type $\R$, with the difference that every tetrahedron of type $\R$ is now of type $\L$, and vice versa (cf. Figure~\ref{fig:edge_section_R}). Furthermore, one may replace $\sigma_0$ with any other tetrahedron in $\tri$ and make the same definitions. For future reference, we summarise all of the above in the following Lemma.

\begin{lem}
	\label{lem:edge_ribbon}
	Every edge $e_j$ in $\tri$ corresponds to a unique subsequence $\L \R^{n_j} \L$ or $\R \L^{n_j} \R$ in $\bw_f$, $n_j \geq 0$, and a unique ribbon of tetrahedra $\sigma_j,\dots,\sigma_{j+n_j+2}$. The simplex $\sigma_j$ is the bottom of the ribbon, while $\sigma_{j+n_j+2}$ is the top of the ribbon, and every other tetrahedron in between constitutes a loop on each side. Hence the valence of $e_j$ is $2n_j + 4$.
\end{lem}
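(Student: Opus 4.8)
The plan is to read the statement off the analysis carried out just before it. That discussion shows that for $e_0 = \pi(\sigma_0(14))$ with $\sigma_0$ of type $\L$, a neighbourhood of $e_0$ is a ribbon $\sigma_0,\dots,\sigma_{n_0+2}$ recorded by a subsequence $\L\R^{n_0}\L$ of $\bw_f$; since the layering construction is local and unchanged by cyclic relabelling of the tetrahedra, and since the case $\sigma_0$ of type $\R$ is the same argument with $\R$ and $\L$ interchanged, one obtains for \emph{every} $j$ an edge $e_j := \pi(\sigma_j(14))$, a ribbon $\sigma_j,\dots,\sigma_{j+n_j+2}$ with bottom $\sigma_j$ and top $\sigma_{j+n_j+2}$, and a subsequence $\L\R^{n_j}\L$ or $\R\L^{n_j}\R$. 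So what remains is: (a) to check that $j\mapsto e_j$ is a bijection from $\{0,\dots,m-1\}$ onto the set of edges of $\tri$, so that each edge really does ``correspond'' to a unique such ribbon and window; and (b) to record the valence, which is then a count.

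For (a), I would first invoke the fact, established earlier in the section from $\chi(M_f)=0$, that $\tri$ has exactly $m$ edges; so it is enough to see that $e_0,\dots,e_{m-1}$ are pairwise distinct. Here I would use the ribbon description: $\pi^{-1}(e_j)$ consists of $\sigma_j(14)$ (the bottom), $\sigma_{j+n_j+2}(23)$ (the top), and, for each of the $n_j+1$ intermediate tetrahedra, its pair of opposite ``loop'' edges --- none of which is an edge of the form $\sigma_i(14)$. Consequently $\sigma_j(14)$ is the unique member of $\pi^{-1}(e_j)$ of the form $\sigma_i(14)$, so $e_j=e_{j'}$ forces $\sigma_j(14)=\sigma_{j'}(14)$ and $j=j'$. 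Thus $\{e_0,\dots,e_{m-1}\}$ is the whole edge set of $\tri$, and the assignment of a ribbon and a window $\L\R^{n_j}\L$ or $\R\L^{n_j}\R$ to each edge is well defined and injective --- this is the uniqueness in the statement. For (b), $|\pi^{-1}(e_j)| = 2 + 2(n_j+1) = 2n_j+4$.

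The hard part is not this bookkeeping but the input it rests on, namely the stability behind the ribbon description: once the class of $\sigma_j(14)$ has become the smallest (resp.\ largest) slope of a top pleated surface, it remains so through every subsequent right (resp.\ left) layering and is destroyed exactly by the next left (resp.\ right) layering. I would establish this from the slope combinatorics: the edge created by a layering, $\sigma_i(14)$, is the Farey mediant of the two slopes of $\sigma_i(\ptorus)_+$ that survive that layering, hence lies strictly between them, so it is removed neither by a following left layering (which removes the smallest slope) nor by a following right layering (which removes the largest); and an induction on the mediant relation shows that a slope that is currently extremal stays extremal under flips of the opposite extreme, up to the first flip of its own extreme. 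In the language of the Farey tree this is the familiar fact that the axis of $f$ exits the star of the ideal vertex labelled $e_j$ through the edge opposite the one by which it entered, yielding the turning word $\L\R^{n_j}\L$ or $\R\L^{n_j}\R$. This Farey-combinatorial claim is where I would spend the care; everything else is the cyclic bookkeeping already set up before the statement.
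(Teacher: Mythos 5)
Your proposal is correct and takes essentially the same route as the paper. The paper gives no displayed proof of this lemma (it calls it ``a direct consequence of the above discussion''), and what it actually supplies is exactly your bookkeeping: the ribbon description for a fixed $\sigma_0$, its cyclic transport to an arbitrary $j$, the observation (recorded in the remark immediately after the statement) that $\sigma_j(14)$ is the unique element of $\pi^{-1}(e_j)$ carrying label $(14)$ and $\sigma_{j+n_j+2}(23)$ the unique one with label $(23)$, the count $m$ of edges from $\chi(M_f)=0$ that upgrades injectivity of $j\mapsto e_j$ to a bijection, and the valence count $1+2(n_j+1)+1=2n_j+4$. The one place you go beyond the paper is your final paragraph: the paper reads the identifications $\sigma_j(12),\sigma_j(34)\in\pi^{-1}(e_0)$ directly off the layering construction (its Figures 3, 5, 6), whereas you repackage the same combinatorics in Farey-slope language --- the new edge $\sigma_i(14)$ is the mediant of the two surviving slopes, hence the middle slope, becomes extremal after the next flip, persists under flips of the opposite extreme, and is killed by the first flip of its own extreme. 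This reformulation is sound and equivalent; two small points only: ``stays extremal under flips of the opposite extreme'' is a one-line observation (flipping $\{a<b<c\}$ over $c$ yields $\{a,a\oplus b,b\}$, so $a$ remains smallest) rather than an induction, and the closing Farey-tree gloss about exiting ``through the edge opposite the one by which it entered'' is imprecise and best dropped, though it does not affect the argument.
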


We remark that uniqueness of the ribbon follows from the fact that the bottom of the ribbon is the only tetrahedron in $\tri$ whose edge $(14)$ is a representative of $e_j$. Similarly, the top of the ribbon is the only tetrahedron whose edge $(23)$ belongs to $\pi^{-1}(e_j)$. A simple counting argument shows that there is a bijection between the set of  tetrahedra and the set of edges, thus associating every edge to its unique ribbon.\\

{ \noindent
	\begin{minipage}[t]{0.45\textwidth}
		\centering
		\includegraphics[width=4.5cm]{./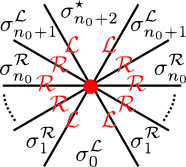}
		\captionof{figure}{A cross section of the ribbon around $e_0$ for $\sigma_0 = \sigma_0^\L$.}
		\label{fig:edge_section_L}
	\end{minipage} \hspace{1cm}
	\begin{minipage}[t]{0.45\textwidth}
		\centering
		\includegraphics[width=4.5cm]{./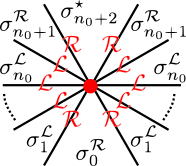}
		\captionof{figure}{A cross section of the ribbon around $e_0$ for $\sigma_0 = \sigma_0^\R$.}
		\label{fig:edge_section_R}
\end{minipage}}


\section{CR Geometry}
\label{sec:CR_geometry}

The \emph{spherical Cauchy-Riemann geometry} is modelled on the \emph{CR sphere}, namely the three-sphere $\SS^3$ equipped with a natural $\mathrm{PU}(2,1)$ action. Unlike what we mentioned in the introduction, here we work with a definition of CR space that does not explicitly make use of contact geometry, but it underlines more clearly the action of $\mathrm{PU}(2,1)$. This point of view is going to be more suitable and relevant to our context. More details on the connection between CR geometry and contact geometry can be found in~\cite{Bland-1994}. For more background material and proofs of the following Lemmas we refer the reader to \S$4.3$~\cite{Goldman-1999} or \S$8$~\cite{Jacobowitz-1990}.

The matrix group $\text{U}(2,1)$ preserves the following Hermitian form defined on the complex space $\CC^3$:
$$
\langle z,w \rangle := \overline{w}^t  Jz, \qquad \text{where} \qquad 
J :=
\begin{pmatrix}
0   & 0  &    1 \\
0    & 1  &    0\\
1    & 0  &    0
\end{pmatrix}.
$$
Let $\pi :\CC^3\setminus\{ 0\} \rightarrow \CP^2$ be the
canonical projection, and consider the following cones in $\CC^3$,
$$
V_0 := \left\{ z\in { \CC}^{3} \setminus\{0\}\ \ |\ \
\langle z,z\rangle = 0 \ \right\}, \qquad V_-  := \left\{ z\in { \CC}^{3}\ \ |\ \ \langle z,z\rangle < 0
\ \right\}.
$$
Then $\HH_{\CC}^2 := \pi(V_-)$ is the Siegel domain model of the \emph{complex hyperbolic plane} and its boundary is
$$
\partial{\HH}_{ \CC}^{2} := \pi(V_0)=\{ [x,y,z]\in \CP^2\ |\ x\bar z+ |y|^2+z\bar x=0\ \}.
$$
As a topological space, $\partial{\HH}_{ \CC}^{2}$ is homeomorphic to the three-sphere $\SS^3.$ It is the \emph{spherical model} of the CR sphere. The projective group $\PU(2,1) := \text{U}(2,1) / \lambda I$ is the group of its biholomorphic transformations. The action of $\PU(2,1)$ on $\partial{\HH}_{ \CC}^{2}$ is by \emph{CR transformations}.

We are now going to describe a model for $\partial{\HH}_{ \CC}^{2}$ which is particularly suitable for our framework. The \emph{Heisenberg group} $\H$ is the space $\CC\times \RR$, equipped with the group law
$$
(z_1,t_1) \cdot (z_2,t_2) := (z_1 + z_2,\  t_1 + t_2 + 2 \Im(z_1 \overline{z_2})), \qquad z_1,z_2 \in \CC, \quad t_1,t_2 \in \RR.
$$
In the formula above, $\Im(z)$ is the imaginary part of the complex number $z$. Using stereographic projection $\Lambda$, one can identify $\partial{\HH}_{ \CC}^{2}$ with the one-point compactification $\Hb$ of $\H$, thus obtaining the \emph{Heisenberg model} of the CR sphere. In coordinates,
$$
\Lambda : \begin{bmatrix}
x \\ y \\ 1
\end{bmatrix} \mapsto\left( y, \frac{2x + |y|^2}{i} \right), \quad
\Lambda^{-1} : (z,t) \mapsto \begin{bmatrix}
\frac{i t - |z|^2}{2} \\ z \\ 1
\end{bmatrix}
\qquad \text{ and } \qquad \Lambda :  \begin{bmatrix}
1 \\ 0 \\ 0
\end{bmatrix} \mapsto \infty.
$$
The action of $\PU(2,1)$ on $\Hb$ is by defined by conjugating with $\Lambda$.

Complex geodesics in $\HH_{ \CC}^{2}$ are totally geodesic submanifolds of real dimension two. Their boundaries in $\partial{\HH}_{ \CC}^{2}$ are topological circles, called \emph{$\CC$--circles}. A $\CC$--circles in $\Hb$ is the image under $\Lambda$ of a $\CC$--circles in $\partial{\HH}_{ \CC}^{2}$.

\begin{lem}
	\label{lem:C_circles_Heisenberg_space}
	In the Heisenberg model $\overline{\H}$, $\CC$--circles are either vertical lines or ellipses whose projections onto the $z$--plane are circles.
\end{lem}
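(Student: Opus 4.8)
The plan is to describe every $\CC$-circle by its \emph{polar vector} and then transport the defining equation to the Heisenberg model via $\Lambda$. Recall (see~\cite{Goldman-1999}) that a complex geodesic of $\HH^2_\CC$ has the form $\pi(\tilde W \cap V_-)$ for a complex $2$-dimensional subspace $\tilde W \subset \CC^3$ meeting $V_-$, and that for the Hermitian form of signature $(2,1)$ such a $\tilde W$ is exactly the orthogonal complement $n^\perp$ of a \emph{positive} vector $n = (n_1,n_2,n_3)^t$, meaning $\langle n,n\rangle = 2\,\Re(\overline{n_1}n_3) + |n_2|^2 > 0$. Consequently a $\CC$-circle in $\partial\HH^2_\CC$ is $\{[p] : \langle p,n\rangle = 0\}$ for such an $n$, and a $\CC$-circle in $\Hb$ is the $\Lambda$-image of one of these.

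First I would substitute $p = \Lambda^{-1}(z,t) = \bigl(\tfrac{it-|z|^2}{2},\,z,\,1\bigr)^t$ into $\langle p,n\rangle = \overline{n}^{\,t} J p$ and use the given matrix $J$ to rewrite the equation $\langle p,n\rangle = 0$ as
$$
\overline{n_1} + \overline{n_2}\,z + \overline{n_3}\,\frac{it-|z|^2}{2} = 0 .
$$
I would also record that $\infty = \Lambda([1:0:0])$ lies on this $\CC$-circle if and only if $\langle (1,0,0)^t, n\rangle = \overline{n_3} = 0$, which sorts the two cases of the lemma.

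If $n_3 = 0$, positivity gives $|n_2|^2 = \langle n,n\rangle > 0$, so $n_2 \neq 0$; the equation collapses to $z = -\overline{n_1}/\overline{n_2}$ with $t$ free, and together with the point $\infty$ this is a vertical line in $\Hb$. If $n_3 \neq 0$, multiply through by $2/\overline{n_3}$ and set $A := 2\overline{n_1}/\overline{n_3}$, $B := 2\overline{n_2}/\overline{n_3}$, so that the equation becomes $|z|^2 - Bz - A - it = 0$. Separating real and imaginary parts (using $t \in \RR$) gives
$$
|z|^2 - \Re(Bz) - \Re(A) = 0, \qquad t = -\Im(Bz) - \Im(A) .
$$
Completing the square in the first equation produces $\bigl|z - \tfrac{\overline{B}}{2}\bigr|^2 = \Re(A) + \tfrac{|B|^2}{4}$, and a short computation identifies the right-hand side with $\langle n,n\rangle/|n_3|^2$, which is strictly positive; hence the first equation cuts out a genuine circle of positive radius in the $z$-plane. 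The second equation then exhibits $t$ as an affine function of $z$ along that circle, so the $\CC$-circle is the graph of an affine map over a circle: it lies in an affine plane of $\RR^3$, is the image of a circle under an affine isomorphism onto that plane (hence an ellipse), and projects vertically onto the circle just described; it avoids $\infty$ because $\overline{n_3} \neq 0$.

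The computation is routine; the points that need care are the polar-vector description of $\CC$-circles — needed so that we capture \emph{all} of them rather than a single $\PU(2,1)$-orbit — the identification of the constant $\Re(A) + |B|^2/4$ with $\langle n,n\rangle/|n_3|^2$, which is exactly what rules out the degenerate possibilities of a point or the empty set, and the elementary fact that the graph of an affine function over a circle is an ellipse with the stated projection. I expect that last identification of the positive constant to be the small technical heart of the argument.
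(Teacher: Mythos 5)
The paper does not prove this lemma; it is stated as background material, with the reader referred to Goldman~\cite{Goldman-1999}~\S4.3 and Jacobowitz~\cite{Jacobowitz-1990}~\S8 for proofs. Your argument is correct and complete, and it is in fact the argument implicit in the cited reference: parametrise $\CC$--circles by polar vectors $n$ with $\langle n,n\rangle>0$, push the incidence relation $\langle p,n\rangle=0$ through $\Lambda^{-1}$, and split on whether $\infty$ lies on the circle, i.e.\ on whether $n_3=0$. The coordinate computation is accurate, the identification $\Re(A)+|B|^2/4=\langle n,n\rangle/|n_3|^2$ is the right way to see that the projection is a genuine circle of positive radius (this is the one step that actually uses positivity of $n$, and you correctly flag it as the technical heart), and the final observation that the graph of a real-affine function over a circle is an ellipse with that circle as its vertical projection cleanly finishes the bounded case. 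Nothing is missing.
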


We remark that a complex geodesic in $\HH_{ \CC}^{2}$ is naturally endowed with a positive orientation given by its complex structure, hence every $\CC$--circle also inherits an orientation.

\begin{lem}
	\label{lem:CR_transformations_preserve_circles}
	CR transformations map $\CC$--circles to $\CC$--circles, preserving their orientations.
\end{lem}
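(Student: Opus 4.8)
The plan is to reduce the statement to two standard facts: that $\PU(2,1)$ acts by projective-linear transformations, hence preserves the family of projective lines in $\CP^2$; and that these transformations are biholomorphic, hence preserve complex orientations.

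First I would recall the projective description of complex geodesics: a complex geodesic of $\HH_{\CC}^2$ is a set of the form $\pi(W \cap V_-)$, where $W \subset \CC^3$ is a $2$--dimensional complex linear subspace with $W \cap V_- \neq \emptyset$; equivalently, it is the intersection of $\HH_{\CC}^2$ with the projective line $\pi(W \setminus \{0\}) \subset \CP^2$ (see \S$4.3$~\cite{Goldman-1999}). Its closure in the closed ball $\overline{\HH_{\CC}^2} = \HH_{\CC}^2 \cup \partial{\HH}_{\CC}^2$ is an embedded closed $2$--disk $\overline{C} := \pi(W \setminus \{0\}) \cap \overline{\HH_{\CC}^2}$, with interior $C$ and boundary circle the associated $\CC$--circle $\partial C = \pi(W \cap V_0)$. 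So $\CC$--circles are exactly the sets $\pi(W \cap V_0)$ for such $W$.

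Then, given $g \in \PU(2,1)$, I would pick a lift $\tilde g \in \text{U}(2,1) \subset \GL(3,\CC)$. Being linear, $\tilde g$ sends $2$--dimensional subspaces to $2$--dimensional subspaces; preserving the Hermitian form $\langle\cdot,\cdot\rangle$, it sends $V_-$ into $V_-$ and $V_0$ into $V_0$. Hence $g$ permutes the complex geodesics of $\HH_{\CC}^2$ and, taking boundaries at infinity, permutes the $\CC$--circles of $\partial{\HH}_{\CC}^2$ (equivalently of $\overline{\H}$): indeed $g(\pi(W\cap V_0)) = \pi(\tilde g(W)\cap V_0)$ is the $\CC$--circle of the complex geodesic $g(\pi(W\cap V_-))$. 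This gives the first assertion. For orientations, I would use that $g$ acts as a homeomorphism of $\CP^2$ preserving $\overline{\HH_{\CC}^2}$, hence restricts to a homeomorphism $\overline{C} \to \overline{g(C)}$ of closed $2$--disks which on interiors is the biholomorphism $g|_C : C \to g(C)$, in particular orientation-preserving for the complex orientations; since the orientation of a $\CC$--circle is by definition the boundary orientation of the complex geodesic with its complex orientation, and an orientation-preserving homeomorphism of disks induces one of their boundary circles, $g$ preserves $\CC$--circle orientations.

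The main point to be careful about is bookkeeping with definitions rather than any genuine difficulty: one must invoke that $\PU(2,1)$ consists of \emph{holomorphic} (not merely smooth) automorphisms — this is precisely what rules out orientation reversal, which would otherwise occur for an anti-holomorphic map such as $[x,y,z]\mapsto[\bar x,\bar y,\bar z]$ — and one must check that the ambient projective action of $g$ genuinely restricts to a disk homeomorphism of $\overline{C}$, which holds because $\tilde g$ is linear and preserves both $V_-$ and $V_0$. One could instead verify invariance directly in Heisenberg coordinates using Lemma~\ref{lem:C_circles_Heisenberg_space}, but that would be considerably less transparent.
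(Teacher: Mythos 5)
The paper does not prove this lemma itself; it defers to \S$4.3$ of Goldman and \S$8$ of Jacobowitz for Lemmas~\ref{lem:C_circles_Heisenberg_space}--\ref{lem:cartan_angle}. Your argument is the standard one found in those references and is correct: lift $g$ to $\tilde g \in \text{U}(2,1)$, use linearity and preservation of the form to see that $\tilde g$ permutes $2$--planes $W$ while fixing $V_-$ and $V_0$ setwise (so $g$ permutes projective lines and hence complex geodesics together with their boundary $\CC$--circles), and then use holomorphy of $g$ to see that the complex orientations of the geodesic disks, and therefore the induced boundary orientations of the $\CC$--circles, are preserved. Your parenthetical remark that it is precisely holomorphy (as opposed to the anti-holomorphic extension of $\PU(2,1)$) that rules out orientation reversal is the right point to flag.
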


Given two distinct points in Heisenberg space $\Hb$, there is a unique $\CC$--circle between them. We say that $m$ points of $\Hb$ are in \emph{general position} if no three are contained in the same $\CC$--circle. The group of CR transformations acts transitively on pairs of distinct points, while generic configurations of triples of points are parametrised by a real number. Given a cyclically ordered triple of points $\cyclic{P_1,P_2,P_3}$ in $\Hb$, its \emph{Cartan angle} $\Cartan$ is
$$
\Cartan(P_1,P_2,P_3) := \arg( - \langle P'_1, P'_2 \rangle \langle P'_2, P'_3 \rangle \langle P'_3, P'_1 \rangle  ) \in \RR, \qquad \text{ where } \quad P'_j = \Lambda^{-1}(P_j).
$$

\begin{lem}
	\label{lem:cartan_angle}
	Three points in $\Hb$ are not in general position if and only if their Cartan angle is $0$. Moreover, the group $\PU(2,1)$ is simply transitive on ordered triples of points in general position with the same Cartan angle.
\end{lem}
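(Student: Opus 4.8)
The plan is to work entirely in the $\CC^3$ model with the Hermitian form $\langle z, w\rangle = \overline{w}^t J z$, lifting points of $\partial\HH^2_\CC$ to null vectors, and to reduce everything to the classical Cartan invariant (the ``angular invariant'' of Cartan). First I would record that for a null vector $P'$ the Hermitian triple product $\langle P'_1, P'_2\rangle\langle P'_2, P'_3\rangle\langle P'_3, P'_1\rangle$ is independent of the choice of null lifts up to a positive real scalar: rescaling $P'_j \mapsto \lambda_j P'_j$ multiplies the product by $|\lambda_1|^2|\lambda_2|^2|\lambda_3|^2 > 0$, so the argument $\Cartan(P_1,P_2,P_3) = \arg(-\langle P'_1,P'_2\rangle\langle P'_2,P'_3\rangle\langle P'_3,P'_1\rangle)$ is well defined on $\Hb$ (and one checks $\infty$ causes no trouble since the triple product of three distinct points, one of which is $\infty$, is still a nonzero complex number). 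Since every $g \in \PU(2,1)$ lifts to $\tilde g \in \U(2,1)$ with $\langle \tilde g v, \tilde g w\rangle = \langle v, w\rangle$, the triple product — and hence $\Cartan$ — is $\PU(2,1)$--invariant. Here I would cite \S4.3 of~\cite{Goldman-1999} for the standard facts.

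Next, the first assertion: three points are \emph{not} in general position iff $\Cartan = 0$. For the easy direction, if $P_1, P_2, P_3$ lie on a common $\CC$--circle, then by Lemma~\ref{lem:C_circles_Heisenberg_space} (or directly: a $\CC$--circle is the boundary of a complex geodesic, i.e. the projectivization of a $J$--nondegenerate complex $2$--plane $W \subset \CC^3$) the null lifts $P'_1, P'_2, P'_3$ all lie in $W$, where the restricted form has signature $(1,1)$; a direct computation — or the fact that the Cartan invariant of a complex hyperbolic line is identically zero — gives that $\langle P'_1,P'_2\rangle\langle P'_2,P'_3\rangle\langle P'_3,P'_1\rangle$ is a negative real number, so its argument after negation is $0$. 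Conversely, if $\Cartan(P_1,P_2,P_3) = 0$, the triple product is a negative real; I would then invoke the standard classification (e.g. Goldman, Cor.~7.1.6 or thereabouts) that the Cartan invariant takes values in $[-\pi/2,\pi/2]$ and vanishes exactly on degenerate (collinear-on-a-$\CC$-circle) triples, or reprove it by exhibiting, for any value $\theta\in(-\pi/2,\pi/2)$, an explicit non-degenerate model triple and computing its invariant, then noting degeneracy forces the product real negative.

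For the simple transitivity statement I would proceed by a normal-form argument. Given a non-degenerate ordered triple with Cartan angle $\theta$, use the transitivity of $\PU(2,1)$ on ordered pairs of distinct points (stated in the excerpt) to move $P_1 \mapsto \infty$ and $P_2 \mapsto (0,0)$ in the Heisenberg model; the stabilizer of this pair is the group of Heisenberg similarities $(z,t)\mapsto (\lambda z, |\lambda|^2 t)$ composed with the rotation $(z,t)\mapsto(e^{i\alpha}z, t)$, i.e. a group of real dimension two acting on the remaining point. One checks this residual action moves $P_3 = (z_3, t_3)$ (with $z_3\neq 0$ by non-degeneracy, since $P_3$ must avoid the $\CC$--circle through $\infty$ and $0$, which is the $t$--axis) to a standard representative depending only on one parameter, which one identifies with $\theta$ via the triple-product formula — the scaling $\lambda$ normalizes $|z_3|$, the rotation $\alpha$ kills $\arg z_3$, and the leftover phase of the triple product is exactly $\Cartan$. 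Injectivity (freeness): if $g$ fixes three points in general position, then it fixes the two $\CC$--circles through the pairs, hence (a CR map fixing a $\CC$--circle pointwise plus another point) it is the identity — or, more concretely, in the normal form above $g$ fixes $\infty, 0$ and a point with $z_3\neq 0$, forcing $\lambda = 1$ and $\alpha = 0$.

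The main obstacle I anticipate is the conversely-direction of the first claim together with pinning down the image of $\Cartan$: showing that $\arg$ of the (negated) triple product is genuinely constrained to $[-\pi/2,\pi/2]$ — equivalently, that $\mathrm{Re}$ of the unnegated product is $\leq 0$ — requires the Cauchy–Schwarz-type inequality for the signature-$(2,1)$ form (Goldman's Hermitian triple-product inequality), without which $\Cartan=0$ would not characterize degeneracy. Once that inequality is in hand, both the characterization and the fact that the one-parameter normal form exhausts exactly the non-degenerate orbits follow. I would simply quote this inequality from~\cite{Goldman-1999} rather than reprove it.
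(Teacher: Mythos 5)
The paper does not actually prove this lemma; it refers the reader to Goldman~\cite{Goldman-1999} and Jacobowitz~\cite{Jacobowitz-1990}. Your overall strategy---lift to null vectors, invoke $\PU(2,1)$--invariance, reduce transitivity to a normal form---is the standard approach from those references, and the simple--transitivity half of your sketch is essentially sound (on ordered pairs use transitivity, normalize the third point by the Heisenberg similarity stabilizer, check freeness via the nondegenerate Gram matrix).

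There is, however, a genuine error in the degeneracy characterization, and it invalidates that half of the argument. You assert that when $P_1',P_2',P_3'$ are null lifts spanning a $2$--plane $W$ on which the form has signature $(1,1)$, the triple product $\langle P_1',P_2'\rangle\langle P_2',P_3'\rangle\langle P_3',P_1'\rangle$ is a negative real number, so $\Cartan=0$. That is false. Choosing a basis of $W$ in which the restricted form is $|u|^2-|v|^2$ and null lifts $P_j'=(e^{i\theta_j},1)$, one has $\langle P_i',P_j'\rangle = e^{i(\theta_i-\theta_j)}-1$, and the triple product collapses to
\[
8\,i\,\sin\tfrac{\theta_1-\theta_2}{2}\,\sin\tfrac{\theta_2-\theta_3}{2}\,\sin\tfrac{\theta_1-\theta_3}{2},
\]
which is purely imaginary; hence $\Cartan=\pm\pi/2$, not $0$, for a triple on a $\CC$--circle. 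Concretely, the vertical $\CC$--circle through $(0,0)$, $(0,1)$, $\infty$ has lifts $(0,0,1)$, $(i/2,0,1)$, $(1,0,0)$, triple product $-i/2$, and $\Cartan=\pi/2$. The ``fact'' you cite---that the Cartan invariant of a complex hyperbolic line is identically zero---is exactly backwards: in Goldman's Theorem~7.1.1, $|\Cartan|=\pi/2$ characterizes triples on a $\CC$--circle, while $\Cartan=0$ characterizes triples on an $\RR$--circle. You should also note that the lemma as printed is therefore misstated under the paper's own conventions (``not in general position'' $\iff \Cartan=\pm\pi/2$, not $\Cartan=0$), so no proof of the sentence as written can succeed; the slip is harmless downstream because the lemma is only invoked, together with simple transitivity, to produce unique face pairings at the fixed angle $\pi/3$, which satisfies the correct threshold $|\Cartan|<\pi/2$ just as well.
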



\subsection{CR Edges}
\label{subsec:CR_edges}

Given two distinct points $P_1,P_2 \in \Hb$, the \emph{oriented edge} $[P_1,P_2]$ is the segment of the $\CC$--circle between $P_1$ and $P_2$, oriented towards $P_2$. For example, the oriented edge $[(0,0),\infty]$ is the segment $\{ (0,t) \in \overline{\H} \ | \ t > 0 \}$, oriented towards $\infty$. Then $[P_1,P_2] \cup [P_2,P_1]$ is the whole $\CC$--circle through $P_1$ and $P_2$. A disk bounded by the loop $[P_1,P_2] \cup [P_2,P_1]$ will be referred to as a \emph{bigon}.


\subsection{CR Triangles}
\label{subsec:CR_triangles}

Suppose $P_1,P_2,P_3 \in \overline{\H}$ are three points in general position. For each pair, there are two possible oriented edges, for a total of eight choices of $1$--skeletons defining a \emph{triangle}. As $\overline{\H}$ is simply connected, we can always extend the $1$--skeleton of a triangle to an embedded $2$--cell, with boundary defined by that $1$--skeleton. This can be done in many different ways, all equivalent up to isotopy. Inspired by the work of Falbel~\cite{Falbel-2008}, we define the \emph{marked triangles} $[{P_1}^+,P_2,P_3]$ and $[{P_1}^-,P_2,P_3]$ as foliations of oriented edges  (cf.~Figure~\ref{fig:CR_triangles}):
\begin{align*}
[{P_1}^+,P_2,P_3] &:= \{ P \in \overline{\H} \ | \ P \in [P_1,P_t] \ \text{ for } \ P_t \in [P_2,P_3]  \},\\
[{P_1}^-,P_2,P_3] &:= \{ P \in \overline{\H} \ | \ P \in [P_t,P_1] \ \text{ for } \ P_t \in [P_2,P_3]  \}.
\end{align*}

\begin{figure}[!h]
	\centering
	\includegraphics*[height=4cm]{./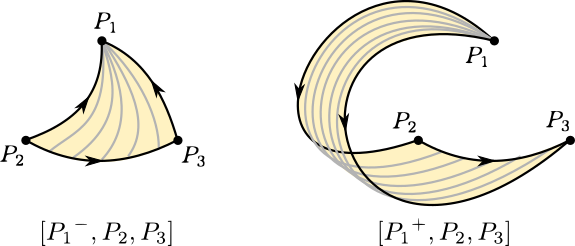}
	\caption{Marked triangles are foliated by oriented edge.}
	\label{fig:CR_triangles}
\end{figure}

By fixing $P_1$ to be at infinity, a marked triangle is half a cylinder with base part of a finite $\CC$--circle. One of the advantages of using marked triangles is that they are uniquely determined by their vertices. The following result is a direct consequence of Lemma~\ref{lem:CR_transformations_preserve_circles}.

\begin{lem}
	\label{lem:CR_transformations_triangles}
	Let $P_1,P_2,P_3$ and $Q_1,Q_2,Q_3$ be two triples of points of $\overline{\H}$ in general position. Suppose there exists $G \in \PU(2,1)$ such that $G(P_j) = Q_j$, for all $j \in \{1,2,3\}$. Then
	$$
	G([{P_1}^\star,P_2,P_3]) = [{Q_1}^\star,Q_2,Q_3], \qquad \star \in \{+,-\}.
	$$
\end{lem}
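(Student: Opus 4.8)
The statement asserts that a CR transformation $G$ carrying a triple of points in general position to another such triple automatically carries the associated marked triangle $[P_1^\star, P_2, P_3]$ onto $[Q_1^\star, Q_2, Q_3]$. The plan is to reduce everything to the action on oriented edges, where Lemma~\ref{lem:CR_transformations_preserve_circles} does all the work, and then to quote the definition of a marked triangle as a foliation by oriented edges. First I would recall that $[P_1^+, P_2, P_3]$ is by definition the union of the oriented edges $[P_1, P_t]$ as $P_t$ ranges over the oriented edge $[P_2, P_3]$. Since $G$ is a biholomorphism of $\Hb$ (an element of $\PU(2,1)$), it is in particular a homeomorphism, so $G$ maps the union of these edges onto the union of their images. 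The key point is then to identify each image $G([P_1, P_t])$ as an oriented edge of the target configuration.

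The heart of the argument is the following observation: for any two distinct points $A, B \in \Hb$, the oriented edge $[A, B]$ is the segment of the \emph{unique} $\CC$--circle through $A$ and $B$ running from $A$ to $B$ in the direction compatible with that circle's orientation. By Lemma~\ref{lem:CR_transformations_preserve_circles}, $G$ sends this $\CC$--circle to the unique $\CC$--circle through $G(A)$ and $G(B)$, preserving orientation; hence $G$ sends the segment from $A$ to $B$ to the segment from $G(A)$ to $G(B)$, i.e.\ $G([A,B]) = [G(A), G(B)]$. Applying this with $A = P_1$, $B = P_t$, and with $A = P_2$, $B = P_3$, gives $G([P_1, P_t]) = [Q_1, G(P_t)]$ and $G([P_2, P_3]) = [Q_2, Q_3]$. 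Therefore, as $P_t$ ranges over $[P_2, P_3]$, the point $G(P_t)$ ranges over $[Q_2, Q_3]$, and the edges $[Q_1, G(P_t)]$ sweep out exactly $[Q_1^+, Q_2, Q_3]$ by definition. This handles the case $\star = +$; the case $\star = -$ is identical, replacing $[P_1, P_t]$ by $[P_t, P_1]$ throughout, and using $G([P_t, P_1]) = [G(P_t), Q_1]$.

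One small point to address cleanly is that the triangles are well-defined in the first place, i.e.\ that general position of $P_1, P_2, P_3$ (and hence of $Q_1, Q_2, Q_3$, since $G$ is a CR transformation and preserves the Cartan angle by Lemma~\ref{lem:cartan_angle}, or simply because $G$ preserves $\CC$--circles) guarantees that each $P_t \in [P_2, P_3]$ is distinct from $P_1$, so that $[P_1, P_t]$ is a genuine oriented edge. This is immediate: if some $P_t$ on the $\CC$--circle through $P_2, P_3$ equalled $P_1$, then $P_1, P_2, P_3$ would lie on a common $\CC$--circle, contradicting general position. I do not anticipate any real obstacle here; the statement is essentially a formal consequence of the definition of marked triangles together with Lemma~\ref{lem:CR_transformations_preserve_circles}, and the only thing requiring care is the bookkeeping of orientations of the $\CC$--circles, which is exactly what that lemma supplies.
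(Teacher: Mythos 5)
Your proof is correct and follows exactly the route the paper intends: the paper simply declares the lemma to be ``a direct consequence of Lemma~\ref{lem:CR_transformations_preserve_circles}'' without writing it out, and your argument (CR transformations preserve oriented $\CC$--circles, hence oriented edges, hence the foliation by oriented edges that defines a marked triangle) is precisely that consequence spelled out, including the correct use of orientation preservation to pin down which arc between two points is the oriented edge.
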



\subsection{CR Tetrahedra and Slabs}
\label{subsec:CR_tetrahedra_slabs}

Given four points of $\Hb$ in general position, a choice of a marked triangle for each triple will not always patch up to form the boundary of a $3$--simplex. On one hand, the faces might not be compatible at the edges and have gaps between them. On the other hand, they could intersect away from the edges. One quickly finds that there is not a canonical choice of marked triangles which always works, thus three dimensional simplices need to be checked on a case by case basis.

Here we are going to describe two fundamental $3$--cells, which will be the building blocks of the CR structures in~\S\ref{sec:CR_structure_bundles}. They are subsets of the Heisenberg space, both topologically homeomorphic to the $3$--ball, but equipped with different simplicial structures. These spaces are defined to be especially symmetric, in the sense that several of their faces can be glued pairwise with monotone maps (cf. Lemma~\ref{lem:face_pairings_are_monotone}). That is not always the case for generic triangles, as previously underlined in Lemma~\ref{lem:cartan_angle}.

\textbf{The standard symmetric tetrahedron.} Let $\omega$ be the cube root of unity $\omega = -\frac{1}{2}\left( 1 + i \sqrt{3}\right)$. We consider the following $4$--tuple of points in general position in Heisenberg space:
$$
P_1 := (1,\sqrt{3}), \qquad P_2 := (-\omega,\sqrt{3} ), \qquad P_3  := (0,0), \qquad P_4 := \infty.
$$

For each triple of points, we consider the following marked triangles:
\begin{enumerate}
	\item $[{P_4}^-,P_1,P_2]$: the oriented segment $[P_1,P_2]$ is the shortest arc of the circle $(e^{i \theta},\sqrt{3})$, oriented from $P_1$ to $P_2$. The triangle $[{P_4}^-,P_1,P_2]$ is part of a cylinder, foliated by vertical segments above $[P_1,P_2]$.
	\item $[{P_4}^-,P_3,P_1]$: the edge $[P_3,P_1]$ is an arc of ellipse which projects onto the $z$--coordinate of the Heisenberg space as an arc of the unit circle with centre $-\omega$. It is given by the parametrisation
	$$
	[P_3,P_1] := \left( -\omega + e^{is}, \sqrt{3} \cos(s) - \sin(s) \right), \qquad s : -\frac{2\pi}{3} \mapsto -\frac{\pi}{3}.
	$$
	Hence $[{P_4}^-,P_3,P_1]$ is foliated by the vertical rays from $[P_3,P_1]$ to $P_4$.
	\item $[{P_4}^-,P_3,P_2]$: this marked triangle is obtained by a $\frac{\pi}{3}$ clockwise rotation of the previous triangle $[{P_4}^-,P_3,P_1]$.
	\item $[{P_2}^-,P_3,P_1]$ and $[{P_3}^+,P_1,P_2]$: the first marked triangle is foliated by oriented edges from $[P_3,P_1]$ to $P_2$. For $\varphi(t,s) := t+s+\frac{\pi}{3}$, we have
	{\fontsize{11}{11}\selectfont 
		\begin{align*}
		[{P_2}^-&,P_3,P_1] := \left( e^{i \varphi(t,s)} + e^{i(s-\frac{\pi}{3})} -\omega  \right. ,\\
		& \left. -\sin(\varphi(t,s)) - \sin(\varphi(t,0)) + \sin(s) + \sqrt{3}\left( \cos(\varphi(t,s)) - \cos(t,0) + \cos(s) + 1 \right) \right),
		\end{align*}
	}
	where $s : -\frac{2\pi}{3} \mapsto -\frac{\pi}{3}$ and $t : 0 \mapsto \frac{\pi}{3}$. The latter one instead, is foliated by oriented edges from $P_3$ to $[P_1,P_2]$. It can be parametrised as
	$$
	[{P_3}^+,P_1,P_2] := \left( e^{it} \left(-\omega + e^{is}\right), \sqrt{3} \cos(s) - \sin(s) \right), \quad s : -\frac{2\pi}{3} \mapsto -\frac{\pi}{3}, \quad t : 0 \mapsto \frac{\pi}{3}.
	$$
\end{enumerate}

\begin{lem}(\cite{Falbel-2008})
	\label{lem:faces_of_tetrahedra}
	The spaces
	\begin{align}
	&[{P_4}^-,P_1,P_2] \cup [{P_4}^-,P_3,P_1] \cup [{P_4}^-,P_3,P_2] \cup [{P_2}^-,P_3,P_1],  \label{eq:tetrahedron_type_A}\\
	&[{P_4}^-,P_1,P_2] \cup [{P_4}^-,P_3,P_1] \cup [{P_4}^-,P_3,P_2] \cup [{P_3}^+,P_1,P_2],  \label{eq:tetrahedron_type_B}
	\end{align}
	are combinatorially isomorphic to a $3$--simplex. In particular, they bound a $3$--ball on each side in $\overline{\H}$.
\end{lem}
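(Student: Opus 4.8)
The plan is to verify directly, in Heisenberg coordinates, that each of the two unions of four marked triangles is the boundary of an embedded $3$--ball, and that the combinatorial pattern of how the four triangles meet along edges and vertices is that of the boundary of a $3$--simplex. Since the paper has already provided explicit parametrisations of every marked triangle involved (items (1)--(4) preceding the statement), the argument is essentially bookkeeping plus a transversality/embeddedness check, so I would organise it as follows.

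First I would record the combinatorial data. For the union \eqref{eq:tetrahedron_type_A}, the four triangles are $[{P_4}^-,P_1,P_2]$, $[{P_4}^-,P_3,P_1]$, $[{P_4}^-,P_3,P_2]$, $[{P_2}^-,P_3,P_1]$; I would list the six edges $[P_1,P_2]$, $[P_3,P_1]$, $[P_3,P_2]$ (with whatever orientations the marked-triangle definitions assign) and check that each of these six edges is a face of exactly two of the four triangles, and that the four vertices $P_1,\dots,P_4$ each lie on exactly three triangles, matching the incidence structure of $\partial\sigma$. This is immediate from the definitions: e.g.\ $[P_3,P_1]$ occurs in $[{P_4}^-,P_3,P_1]$ and in $[{P_2}^-,P_3,P_1]$, $[P_1,P_2]$ occurs in $[{P_4}^-,P_1,P_2]$ and as the base arc of $[{P_2}^-,P_3,P_1]$ (through the edge $[P_1,P_2]$ lying in that triangle's boundary), and so on. The same for \eqref{eq:tetrahedron_type_B}, where $[{P_2}^-,P_3,P_1]$ is replaced by $[{P_3}^+,P_1,P_2]$, which shares the edges $[P_1,P_2]$, $[P_3,P_1]$, $[P_3,P_2]$. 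At this stage I would also observe that along each shared edge the two incident triangles actually agree as arcs (same parametrised curve), so the four triangles glue up to a closed surface $S$ with no gaps; combinatorially $S$ is a triangulated $2$--sphere with the face lattice of $\partial\sigma$.

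The second and main step is to show $S$ is embedded, i.e.\ the only intersections between distinct triangles are along the shared edges listed above. Here I would use the specific geometry: three of the four faces in each case are ``vertical'' triangles with apex $P_4=\infty$ (foliated by vertical segments above a base arc in a horizontal plane $\{t = \text{const}\}$ or above an arc of an ellipse projecting to a circular arc), so their mutual intersections are controlled by intersecting their base curves in the $z$--plane, which are arcs of the circle $|z|=1$ centred at $0$ and arcs of unit circles centred at $-\omega$ — a finite explicit check. The only genuinely three-dimensional face is the fourth one ($[{P_2}^-,P_3,P_1]$ or $[{P_3}^+,P_1,P_2]$), and I would show it lies on one side of the ``tripod'' formed by the three vertical faces by checking, along its foliating edges, that the $z$--projection stays inside the region bounded by the three circular base arcs and that the $t$--coordinate does not re-enter the vertical walls; the parametrisations given (with $\varphi(t,s)=t+s+\tfrac\pi3$, $s\in[-\tfrac{2\pi}{3},-\tfrac\pi3]$, $t\in[0,\tfrac\pi3]$) are designed to make this monotone, which is exactly what one computes. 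Having established that $S$ is an embedded $2$--sphere in $\overline{\H}\cong S^3$, the Jordan--Brouwer / Schoenflies theorem in $S^3$ gives that $S$ bounds a $3$--ball on each side, and the combinatorial identification from step one upgrades ``$3$--ball'' to ``combinatorially a $3$--simplex''.

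The hard part will be the embeddedness verification for the slanted face — ruling out that the arc-of-ellipse edge $[P_3,P_1]$, or the $2$--cell $[{P_2}^-,P_3,P_1]$ it foliates, pokes back through one of the three vertical walls away from the prescribed gluing edges. I expect this to reduce to showing a handful of coordinate functions built from $e^{i\varphi(t,s)}$, $e^{i(s-\pi/3)}$ and the corresponding heights are monotone (or at least injective) on the parameter rectangle, together with the observation that at the endpoints $s=-\tfrac{2\pi}{3}$ and $s=-\tfrac\pi3$ one recovers exactly the edges $[P_3,P_2]$-side and $[P_3,P_1]$ respectively, and at $t=0$, $t=\tfrac\pi3$ one recovers $[P_3,P_1]$ and $[P_2,P_1]$; these are the calculations behind the cited result of Falbel~\cite{Falbel-2008}, so I would carry them out explicitly in coordinates exactly as he does, and for \eqref{eq:tetrahedron_type_B} note that $[{P_3}^+,P_1,P_2]$ is obtained from simple rotations ($e^{it}$ acting on the $z$--coordinate) of already-understood pieces, so its embeddedness follows by the same estimates.
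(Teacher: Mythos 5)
The paper does not supply a proof of this lemma; it is cited directly to Falbel~\cite{Falbel-2008}, and the author explicitly defers the analogous ``tedious'' coordinate checks (see the proof of Lemma~\ref{lem:face_pairings_are_monotone} below) to that source. Your proposal --- matching the edge/vertex incidence pattern of $\partial\sigma$, verifying that the four marked triangles glue to an embedded piecewise-smooth $2$--sphere by controlling the $z$--projections of the three vertical faces and the monotonicity of the $(s,t)$--parametrisation of the slanted fourth face, then invoking Schoenflies in $S^3$ --- is a faithful reconstruction of Falbel's strategy, and you correctly identify the slanted face's non-re-entry into the vertical walls as the technical heart of the argument. One small terminological slip: in $[{P_2}^-,P_3,P_1]$ the base arc of the foliation is $[P_3,P_1]$, not $[P_1,P_2]$; the edge $[P_1,P_2]$ arises as the extremal leaf corresponding to $P_t = P_1$, but this does not affect the incidence count.
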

The \emph{standard (symmetric) tetrahedron $\tetA$ of type $A$} is the closure of the $3$--ball bounded by the $3$--simplex in~(\ref{eq:tetrahedron_type_A}), which is contained in the upper half of $\overline{\H}$. Similarly, the $3$--simplex in~(\ref{eq:tetrahedron_type_B}) is the boundary of the \emph{standard (symmetric) tetrahedron $\tetB$ of type $B$}. Figure~\ref{fig:CR_tetrahedra} shows $\tetA$ and $\tetB$ in the Heisenberg model.

\begin{figure}[ht]
	\centering
	\includegraphics[height=6cm]{./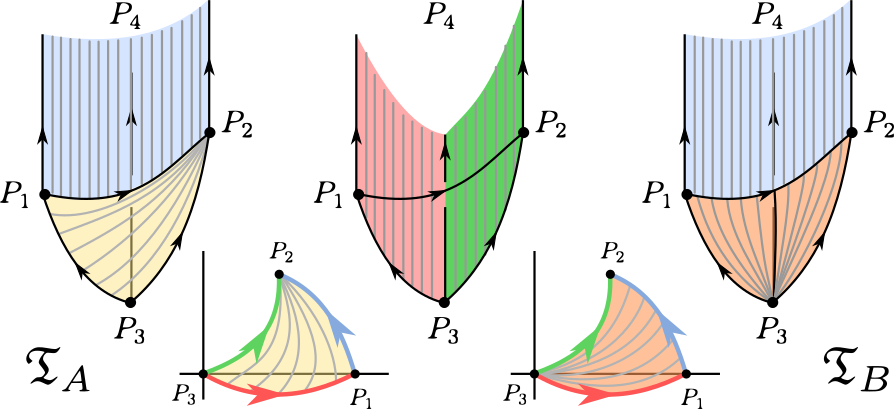}
	\caption{The standard symmetric tetrahedra $\tetA$ and $\tetB$ only differ along the face with vertices $\left\{ P_1, P_2, P_3 \right\}$. Their standard embeddings in Heisenberg space and their $\CC$--projections are displayed here.}
	\label{fig:CR_tetrahedra}
\end{figure}

These tetrahedra exhibit various symmetries, for example an anti-holomorphic involution swapping the vertices $P_1$ with $P_2$, and $P_3$ with $P_4$ (cf.~\cite{Will-2006}). Furthermore, the vertices of each face (taken with the correct cyclic order) have the same Cartan angle,
$$
\Cartan(P_2,P_3,P_1) = \Cartan(P_4,P_1,P_2) = \Cartan(P_4,P_3,P_2) = \Cartan(P_4,P_3,P_1) = \frac{\pi}{3}.
$$
As a consequence of Lemma~\ref{lem:cartan_angle} and Lemma~\ref{lem:CR_transformations_triangles}, we can glue faces of $\tetA$ and $\tetB$ pairwise by (unique) CR transformations. Consider the following matrices of $\PU(2,1)$,
$$
G_1 := \begin{bmatrix}
-\omega & 0 & 0\\
1 & 1 & 0\\
-\overline{\omega} & \omega & -\omega
\end{bmatrix}, \qquad
G_2 := \begin{bmatrix}
1 & 1 & \omega\\
0 & -\overline{\omega} & \overline{\omega}\\
0 & 0 & 1
\end{bmatrix}, \qquad
G_3 := \begin{bmatrix}
1 & 0 & 0\\
0 & -\omega& 0\\
0 & 0 & 1
\end{bmatrix}.
$$
These are the unique CR transformations mapping:
\begin{alignat*}{3}
G_1 : \quad &P_4 \mapsto P_2 \quad P_3 \mapsto P_3 \quad P_1 \mapsto P_1 \quad \text{ hence } \quad &&[{P_4}^-,P_3,P_1] \mapsto [{P_2}^-,P_3,P_1],\\
G_2 : \quad &P_4 \mapsto P_4 \quad P_1 \mapsto P_3 \quad P_2 \mapsto P_2 \quad \text{ hence } \quad &&[{P_4}^-,P_1,P_2] \mapsto [{P_4}^-,P_3,P_2],\\
G_3 : \quad &P_4 \mapsto P_4 \quad P_3 \mapsto P_3 \quad P_1 \mapsto P_2 \quad \text{ hence } \quad &&[{P_4}^-,P_3,P_1] \mapsto [{P_4}^-,P_3,P_2].
\end{alignat*}
We remark that $G_2$ and $G_3$ are face pairings between two standard tetrahedra of any types, while $G_1$ necessarily glues onto a face of the standard tetrahedron of type $A$. Furthermore, $G_2$ and $G_3$ can be described quite nicely in Heisenberg coordinates:
\begin{align*}
G_2([z,t]) &= \left[-\overline{\omega}(z-1) \ , \ \sqrt{3}\overline{\omega}(z+\overline{\omega}-1)(z+\omega) + t \right],\\
G_3([z,t]) &= \left[-\omega z \ , \  t \right].
\end{align*}
The transformation $G_2$ preserves vertical $\CC$--circles and it restricts on the $z$--plane to a $\frac{\pi}{3}$ clockwise rotation around the point $-\omega$. The transformation $G_3$ is a $\frac{\pi}{3}$ anticlockwise rotation of $\overline{\H}$ around the vertical $\CC$--circle through $[0,0]$.

\textbf{The slabs.} The next fundamental piece that we are going to define is of the combinatorial type of the CW~complex obtained by deformation retracting the base of a square pyramid onto one of its sides. In particular, it is a $3$--cell bounded by two triangular faces and two bigons. It contains a total of five $1$--cells and three $0$--cells.

We define the following bigons of $\overline{\H}$:
\begin{alignat*}{4}
&B' := &&\left( 1 + t e^{-i\frac{\pi}{6}}, s \right), \qquad &&t \in \RR_{>0} \cup \{\infty\}, \quad s \in \RR \cup \{\infty\},\\
&B_k := &&\left( -\omega + t e^{i\frac{\pi}{6}(1-2k)}, s \right), \qquad &&t \in \RR_{>0} \cup \{\infty\}, \quad s \in \RR \cup \{\infty\}, \quad k \in \ZZ.
\end{alignat*}
We remark that both $B'$ and $B_k$ are foliated by vertical $\CC$--circles. In particular, $B' \cap B_k = \infty$ for all $k$. Moreover,
$$
B_{k_1} = B_{k_2} \iff k_1 = k_2 \mod 6.
$$
The CW~complex obtained by attaching
$$
[{P_4}^+,P_1,P_2] \ \cup \ [{P_4}^-,P_1,P_2]  \ \cup \  B'  \ \cup \  B_k,
$$
is topologically a $2$--sphere. For all $k$, it bounds a $3$--ball containing the point $(2,\sqrt{3}) \in \overline{\H}$. We define the \emph{slab} $\slab_k$ to be the closure of such $3$--ball. The slabs $\slab_{k_1}$ and $\slab_{k_2}$ are geometrically equivalent if and only if $k_1 = k_2 \mod 6$, in the sense that there is $G \in \PU(2,1)$ such that $G (\slab_{k_1}) = \slab_{k_2}$. This is due to the fact that the $2$--skeletons of $\slab_{k_1}$ and $\slab_{k_2}$ only differ along one face. Whence we defined a total of six different slabs. Two examples $\slab_1$ and $\slab_4$ are depicted in Figure~\ref{fig:slabs}.

\begin{figure}[!t]
	\centering
	\includegraphics[height=5cm]{./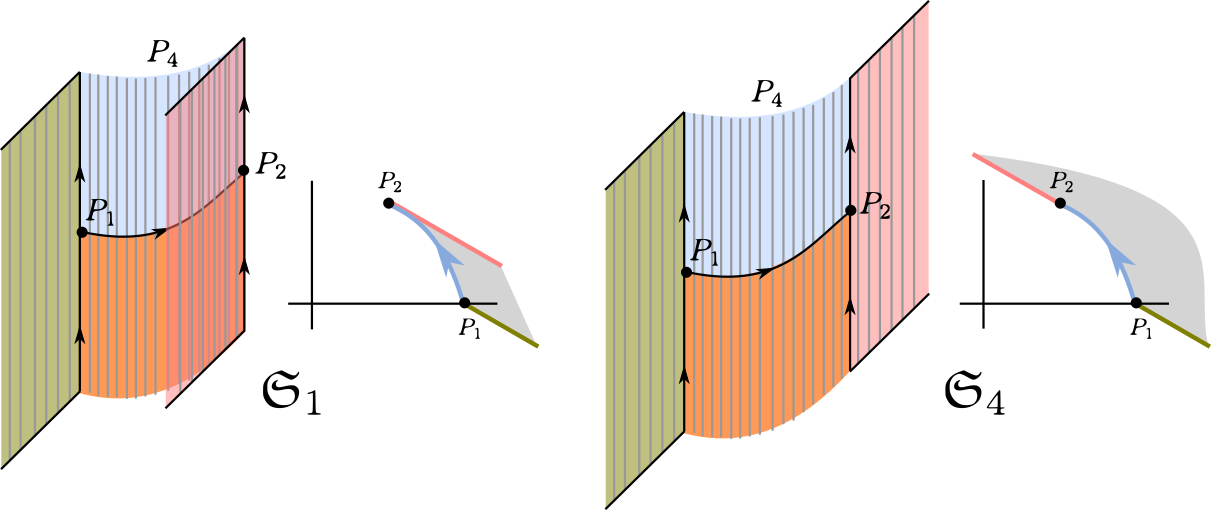}
	\caption{Standard embeddings in Heisenberg space and $\CC$--projections of the slabs $\slab_1$ and $\slab_4$.}
	\label{fig:slabs}
\end{figure}

As we mentioned earlier, $\Cartan(P_4,P_1,P_2) = \Cartan(P_3,P_1,P_2)$, hence let $G_4$ be the (unique) element of $\PU(2,1)$: \vspace*{0.4cm}

\begin{minipage}[0.0\textwidth]{0.4\textwidth}
	$$
	G_4 := \begin{bmatrix}
	0 & 0 & -\omega\\
	0 & -\overline{\omega} & 0\\
	-\omega & 0 & 1-\overline{\omega}
	\end{bmatrix},
	$$
\end{minipage}
\begin{minipage}[0.5\textwidth]{0.3\textwidth}
	\vspace*{-0.5cm}
	\begin{alignat*}{1}
	G_4 : P_4 \mapsto P_3 \quad P_1 &\mapsto P_1 \quad P_2 \mapsto P_2,\\
	[{P_4}^+,P_1,P_2] &\mapsto [{P_3}^+,P_1,P_2].
	\end{alignat*}
\end{minipage} \vspace*{0.5cm}

For all $k$, the CR transformation $G_4$ is a face pairing between the slab $\slab_k$ and the standard tetrahedron of type $B$.

The use of six different slabs turns out to be necessary in the general construction of~\S\ref{subsec:general_case}. The reason for the number six is due to the fact that the CR transformations $G_1, G_2, G_3$ and $G_4$ are all of order six. The connection between them and the slabs is revealed in Theorem~\ref{thm:geometric_realisation_is_structure}.\\

We conclude this section with a definition and an observation. Let $W_1$ and $W_2$ be two CW~complexes embedded in $\overline{\H}$, and let $G \in \PU(2,1)$ be a face pairing between the faces $F_1 \subset W_1$ and $F_2 \subset W_2$. Then $G(W_1)$ and $W_2$ might intersect away from $G(F_1) = F_2$. We say that the face pairing $G$ is \emph{monotone} if there are neighbourhoods $\N_1,\N_2$ of $F_1,F_2$ in $W_1,W_2$ respectively such that $\N_2 \cap G(W_1) = G(\N_1) \cap W_2   = F_2$. The following result generalises an observation by Falbel~\cite{Falbel-2008}.

\begin{lem}
	\label{lem:face_pairings_are_monotone}
	The transformations $G_1,G_2,G_3$ are monotone face pairings of the standard symmetric tetrahedra $\tetA$ and $\tetB$, while $G_4$ is a monotone face pairing between the slab and the standard tetrahedron of type $B$.
\end{lem}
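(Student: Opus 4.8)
The plan is to reduce monotonicity to an explicit local check along the common face of each pairing, exploiting a convenient fixed-point structure: for each $i$ the two faces that $G_i$ identifies share an edge $E_i$, and $G_i$ fixes $E_i$ pointwise. (Concretely $G_1$ fixes $[P_3,P_1]$, $G_2$ fixes $[P_2,P_4]$, $G_3$ fixes $[P_3,P_4]$ and $G_4$ fixes $[P_1,P_2]$: in each case $\Lambda^{-1}$ of the two endpoints of that edge span a complex line on which the given matrix acts by a scalar, so the $\CC$--circle through them — hence the edge — is fixed pointwise.) Granting this, monotonicity of $G_i$ can be read off along $F_2$ and along $E_i$ alone. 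Near an interior point $q$ of $F_2$ one computes the direction transverse to $F_2$ pointing into $W_2$ (it points toward the vertex of $W_2$ not lying on $F_2$) together with the image under $DG_i$ of the transverse direction pointing into $W_1$ at $G_i^{-1}(q)$, and checks that these lie on opposite sides of $F_2$. Near $E_i$, since $G_i$ preserves the orientation of $\overline{\H}\cong\SS^3$ and fixes $E_i$ pointwise, the linear isomorphism induced by $DG_i$ on the $2$--plane transverse to $E_i$ is orientation-preserving and carries the ray in which $F_1$ leaves $E_i$ onto the ray in which $F_2$ leaves $E_i$; one then checks that it carries the side of that ray occupied by $W_1$ onto the side of the image ray \emph{opposite} to the one occupied by $W_2$. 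Each of these is a finite computation with the explicit matrix $G_i$ and the explicit parametrisations of the faces.

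For $G_2$ and $G_3$ these checks collapse to the plane. As recorded after their definition, $G_2$ and $G_3$ preserve the foliation of $\overline{\H}$ by vertical $\CC$--circles and descend to the rotations $\overline{G}_2$ (clockwise by $\frac{\pi}{3}$ about $-\omega$) and $\overline{G}_3$ (anticlockwise by $\frac{\pi}{3}$ about $0$) of the $z$--plane. All the faces they identify — $[{P_4}^-,P_1,P_2]$, $[{P_4}^-,P_3,P_1]$, $[{P_4}^-,P_3,P_2]$ — are unions of vertical rays, and both $\tetA$ and $\tetB$ $\CC$--project onto the same curvilinear triangle $\triangle$ with vertices $1,-\omega,0$, whose sides are the corresponding arcs of unit circles. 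Hence the side of such a vertical face occupied by $\tetA$ or $\tetB$ is detected by the $z$--projection, and monotonicity reduces to the elementary planar statement that $\overline{G}_2$ and $\overline{G}_3$ carry one side of $\triangle$ onto an adjacent side while moving $\triangle$ onto the side of that adjacent arc which does not contain $\triangle$; this is visible from the explicit formulas for $\overline{G}_2,\overline{G}_3$, or directly from the $\CC$--projections drawn in Figure~\ref{fig:CR_tetrahedra}.

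The substantial cases are $G_1$ and $G_4$, which do not preserve the vertical foliation: $G_1$ sends $P_4=\infty$ to the finite point $P_2$, and $G_4$ sends $\infty$ to $P_3$. Here the common face is genuinely curved — it is $[{P_2}^-,P_3,P_1]$ for $G_1$ and $[{P_3}^+,P_1,P_2]$ for $G_4$ — so one must differentiate its explicit parametrisation to obtain its tangent plane and the transverse picture along the fixed edge ($[P_3,P_1]$, resp.\ $[P_1,P_2]$), and then run the transverse-direction checks above with $DG_1$, resp.\ $DG_4$. I expect the main obstacle to be exactly this bookkeeping: the parametrisations of the two curved faces are unwieldy, so extracting the local co-orientations requires care, and in addition the pointwise-along-$F_2$ check must be upgraded to the genuine neighbourhood statement $\N_2\cap G_i(W_1)=F_2$ — one must rule out that some other face of $G_i(W_1)$ re-enters $W_2$ before leaving the chosen collar of $F_2$ — which is where the fully explicit, symmetric description of the remaining faces of $\tetA$, $\tetB$ and $\slab_k$ is used. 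No single step is conceptually deep; the work lies in carrying out these explicit computations carefully, the cases $G_1,G_2,G_3$ being a mild extension of the observation of Falbel~\cite{Falbel-2008} recalled above.
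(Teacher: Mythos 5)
Your plan agrees with the paper for $G_2$ and $G_3$ (both reduce to rotations of the $z$--plane carrying the curvilinear triangle to an adjacent copy), but for the hard cases $G_1$ and $G_4$ you take a genuinely different route, and the comparison is worth making. Your fixed-edge observation is correct and is not something the paper states: in each case $\Lambda^{-1}$ of the two fixed vertices does span a complex $2$--plane on which the given lift acts by a scalar ($-\omega$ for $G_1$, $1$ for $G_2,G_3$, $-\overline{\omega}$ for $G_4$), so the shared edge $E_i$ is indeed fixed pointwise. Given that, you propose a local differential argument: compare transverse directions into the two cells at interior points of $F_2$ and along $E_i$, then upgrade to a neighbourhood statement. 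The paper argues differently for $G_4$: it pulls $\tetB$ back by $G_4^{-1}$ so that the common face $[{P_4}^+,P_1,P_2]$ becomes a vertical cylinder over the arc $[P_1,P_2]$, then exhibits a bounded planar region $R$ (bounded by $[0,1]$, the projection of $[P_1,P_2]$, and the projection of $G_4^{-1}([P_4,P_2])$) such that $G_4^{-1}(\tetB)$ lies inside the cylinder over $R$, while $\slab_k$ projects to the opposite side of that arc. That single global containment immediately \emph{is} the neighbourhood statement $\N_2\cap G(W_1)=F_2$; no separate local-to-global upgrade is required. (For $G_1$ the paper just cites Falbel.)

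The soft spot in your plan is exactly the step you flag at the end. A pointwise transversality check at interior points of $F_2$ and along the fixed edge $E_i$ does not by itself rule out re-entry: you still need to control what happens along the two non-fixed edges of $F_2$ and at the ideal vertices $P_1,P_2,P_3$, where the transverse picture can degenerate and where several faces of both cells meet, and only then does compactness turn the pointwise statements into a collar. You acknowledge this but leave it as "the fully explicit, symmetric description of the remaining faces is used," which is not yet an argument. The paper's projection-into-a-region trick handles local transversality and the collar statement in one stroke, so it is the cleaner route for $G_4$; if you prefer to keep your local approach, you would need to extend the transversality computation to a full neighbourhood of $\partial F_2$, not just $E_i$. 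Alternatively, your fixed-edge observation could be used to streamline a containment argument in the paper's style, since it pins the common $\CC$--circle and hence the base arc of the separating cylinder.
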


\begin{proof}
	The transformations $G_2$ and $G_3$ are simple to check. They preserve vertical $\CC$--circles, therefore one only needs to check the intersection of the projections of the tetrahedra on the $z$--plane.
	
	On the other hand, $G_1$ and $G_4$ are more tedious. We give a summary of the argument for $G_4$, and refer to~\cite{Falbel-2008} for $G_1$. Consider the slab $\slab_k$ and the tetrahedron $\tetB$. The transformation $G_4^{-1}$ glues $\tetB$ to $\slab_k$ along the face $[{P_4}^+,P_1,P_2] = G_4^{-1}([{P_3}^+,P_1,P_2])$. The remaining vertex of $\tetB$ is mapped to the point $G_4^{-1}(P_4) = [0,2\sqrt{3}]$ in Heisenberg space. The projection of the $1$--skeleton of $G_4^{-1}(\tetB)$ is displayed next to the projection of $\slab_1$ in Figure~\ref{fig:projection_monotonicity}.
	
	\begin{figure}[!h]
		\centering
		\includegraphics[height=3.5cm]{./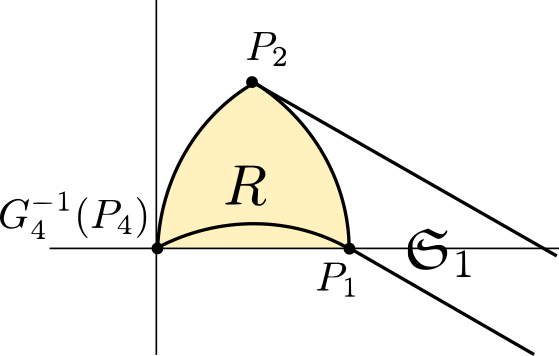}
		\caption{The projection of the $1$--skeleton of $G_4^{-1}(\tetB)$ next to the projection of $\slab_1$.}
		\label{fig:projection_monotonicity}
	\end{figure}
	
	Let $R$ be the region of $\CC$--plane bounded by the straight segment from $0$ to $1$, and the projections of the edges $[P_1,P_2]$ and $G_4^{-1}([P_4,P_2])$. Then $G_4^{-1}(\tetB)$ is completely contained in the vertical cylinder of Heisenberg space with base $R$. In particular, there is a neighbourhood of the common face where $G_4^{-1}(\tetB)$ and $\slab_k$ only intersect along the face, and therefore $G_4^{-1}$ is a monotone face pairing between $\slab_k$ and $\tetB$. By symmetry of the definition, we conclude that $G_4$ is also monotone.
\end{proof}


\section{Branched CR structures on once-punctured torus bundles}
\label{sec:CR_structure_bundles}

Let $M_f$ be a hyperbolic once-punctured torus bundle. In this section we prove the main result of this paper, that $M_f$ admits a branched CR structure (cf. Theorem~\ref{thm:geometric_realisation_is_structure}). We start by formalising the notion of a branched CR structure on $M_f$. Definitions and terminology are inspired by the work on branched analytic structures on Riemann surfaces in~\cite{Mandelbaum-1972}. Then we describe CR structures as finite geometric realisations of ideal decompositions. Finally, we give the construction for the figure eight knot~\S\ref{subsec:figure_eight_complement} and in the general case~\S\ref{subsec:general_case}.\\

A \emph{branched covering} between two manifolds is a covering map everywhere except for a nowhere-dense set, called the \emph{branch locus}. For example, the \emph{standard CR branching map} $\xi : \overline{\H} \rightarrow \overline{\H}$ defined by $\xi(z,t) := (z^N,t)$ is a branched map of ramification order $N \in \ZZ \setminus \{0\}$. In particular, $\xi$ is locally injective everywhere except at the branch locus, namely the Heisenberg $t$--axis, where the total angle is $2N\pi$.

A \emph{CR branched coordinate covering} $\{ U_j, \phi_j  \}$ of $M_f$ consists of an open covering $\{ U_j \}$ of $M_f$ together with branched coverings $\phi_j : U_j \rightarrow V_j$ into open subsets $V_j$ of the CR space $\overline{\H}$, that are locally modelled on the standard CR branching map $\xi$. A \emph{branched CR cover} is a coordinate covering $\{ U_j, \phi_j  \}$ such that, on each non-empty intersection $U_i \cap U_j$, there are homeomorphisms called \emph{coordinate transition functions}
$$
G_{ij} : \phi_i(U_i \cap U_j) \rightarrow \phi_j(U_i \cap U_j),
$$
that are restrictions of elements in $\PU(2,1)$. In particular they satisfy $G_{ij} \circ \phi_i = \phi_j$. A \emph{branched CR structure} on $M_f$ is an equivalence class of branched CR covers, where two branched CR covers are equivalent if their union is a branched CR cover. As a brief example of a natural branched structure, we mention the hypersurface $\Sigma \subset \CC^2$ defined by
$$
\Sigma := \{ (z_1,z_2) \in \CC^2 \ | \ |z_1|^{2N} + |z_2|^2 = 1 \}.
$$
We observe that the map $\xi' : \Sigma \rightarrow \overline{\H}$ defined by $\xi'(z_1,z_2) = (z_1^N,z_2)$ is a branched covering, branched along the curve $z_2=0$.

Let $\{ U_j, \phi_j  \}$ be a branched CR structure on $M_f$. When the ramification order of each chart $\phi_j$ is one, they are homeomorphisms and one recovers the usual definitions of coordinate covering, CR cover and CR structure~\cite{Thurston-1979}. We recall that every CR structure admits a \emph{developing map} and a \emph{holonomy representation},
$$
\dev : \widetilde{M_f} \rightarrow \overline{\H} \qquad \text{ and } \qquad \hol : \pi_1(M_f) \rightarrow \PU(2,1),
$$
such that
\begin{equation}
\label{eq:dev_hol_equivariancy}
\hol(\gamma) \cdot \dev(x) = \dev( \gamma \cdot x), \qquad \gamma \in \pi_1(M_f), \quad x \in \widetilde{M_f}.
\end{equation}
The developing map is considered up to deck transformation invariant isotopy, and the pair $(\dev,\hol)$ is uniquely determined up to the following action of $\PU(2,1)$:
$$
G \cdot (\dev,\hol) := (G \cdot \dev, \ G \cdot \hol \cdot G^{-1}), \qquad G \in \PU(2,1).
$$
Developing maps thus obtained are locally injective, as the charts $\phi_j$ are homeomorphisms. Vice versa, a locally injective developing map together with a holonomy representation satisfying the equivariancy condition (\ref{eq:dev_hol_equivariancy}), always defines a CR structure. We refer the reader to~\cite{Thurston-1997} for a full treatment in the wider context of geometric $(G,X)$--structures.

In a similar fashion, one may construct developing maps and holonomy representations for branched CR structures. From the motivational point of view, given only a representation into $\PU(2,1)$, it is not clear that it occurs as the holonomy representation of a spherical CR structure. In that sense, it is useful to consider the more general definition of a branched structure, in the hope that any given representation might be understood in a geometric way. The only difference being that developing maps are not locally injective but locally branched coverings. In particular, the holonomy around each connected component of the branch locus is a rotation by an integer multiple of $2\pi$, and therefore trivial, ensuring a well defined representation of $\pi_1(M_f)$.\\


\subsection{Finite geometric realisations}
\label{subsec:finite_geometric_realisations}

In~\S\ref{subsec:general_case} we construct special branched CR structures on $M_f$, whose branch locus is a disjoint union of curves. The strategy is to use an \emph{ideal cell decomposition} $\cell_f$ of $M_f$, modelled on its monodromy ideal triangulation $\tri_f$, whose edge set is the branch locus. We are going to realise each ideal cell as a geometric object in Heisenberg space and each face pairing as an element of $\PU(2,1)$, in a compatible fashion. More precisely, suppose $\cell_f$ is made up of the ideal $3$--cells $\sigma_i$, with face pairings $g_j$. We recall that a face pairing is called monotone when the paired cells only intersect along the common face in a neighbourhood of such face (cf. end of~\S\ref{subsec:CR_tetrahedra_slabs}). A \emph{geometric realisation} $\{ \phi_i, G_j \}$ of $\cell_f$ in $\overline{\H}$ consists of embeddings $\phi_i: \sigma_i \rightarrow \overline{\H}$ and CR transformations $G_j \in \PU(2,1)$, satisfying the following condition: if $g_j$ is the gluing map between the faces $F_i$ and $F_k$ of the ideal $3$--cells $\sigma_i$ and $\sigma_k$ respectively, then $G_j$ is a monotone CR transformation pairing $\phi_i(F_i)$ and $\phi_k(F_k)$ in the same combinatorial way. Then we say that $\phi_i$ and $G_j$ are \emph{geometric realisations} of $\sigma_i$ and $g_j$ respectively.

A geometric realisation differs from a branched CR structure only at the edges. For each edge $e$, consider a small oriented loop $\gamma_e$ around $e$, with prescribed starting point $x \in \gamma_e$ contained in the interior of some cell. Let $F_0^e \dots F_{N_e}^e$ be the sequence of faces in $\cell_f$ containing $e$, ordered as they are crossed by $\gamma_e$, starting from $x$. As $\gamma_e$ travels through a face $F^e_j$, it leaves an ideal cell $\sigma$ to enter another ideal cell $\sigma'$ (possibly equal to $\sigma$). Let $g_j^e$ be the face pairing gluing $\sigma$ to $\sigma'$ along $F^e_j$, and let $G_j^e$ be its corresponding geometric realisation. Then the \emph{geometric holonomy} of $\{ \phi_i, G_j \}$ along $\gamma_e$ is the product $\prod_{j=0}^{N_e} G_{N_e - j}^e$. We remark that a different choice of $\gamma_e$ only changes the geometric holonomy by conjugation or by inverse, hence whether the geometric holonomy around an edge $e$ is trivial (namely equal to the identity) or not, does not depend on the choice of $\gamma_e$.

In general, it is not guaranteed that the geometric holonomy is trivial because a geometric realisation does not enforce any conditions on the local structure around the edges. However, when that is the case for every edge of the cell decomposition, then a geometric realisation can be extended to a branched CR structure. More precisely, there is a branched CR structure on $M_f$ whose set of charts include the embeddings $\phi_i$, and the coordinate transition functions along the faces are the CR transformations $G_j$. In particular, it is important that the maps $G_j$ are monotone to ensure local injectivity at the faces. Furthermore, the fact that the geometric holonomy around an edge $e$ is trivial allows the construction of a chart containing $e$ which is a branched covering (with branch locus $e$) and which agrees with $\phi_i$ around $e$. An example of this construction can be found in~\cite{Yoshida-1991}, in the particular case of triangulations and hyperbolic structures.

For future reference, we summarise the above discussion in the following result.

\begin{lem}
	\label{lem:geometric_realisation_CR_structure}
	Let $\{ \phi_i, G_j \}$ be a geometric realisation of $\cell_f$ in $\overline{\H}$. If the geometric holonomy around each edge is trivial, then $\{ \phi_i, G_j \}$ defines a branched CR structure on $M_f$.
\end{lem}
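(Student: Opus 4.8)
The plan is to reconstruct the branched CR structure directly from the geometric realisation by gluing together the embedded cells and showing the result is a branched CR cover of $M_f$. First I would form the disjoint union $\bigsqcup_i \phi_i(\sigma_i) \subset \overline{\H}$ of the embedded cells and quotient by the face identifications $G_j$; since each $\phi_i$ is an embedding and the $G_j$ are homeomorphisms between faces, the quotient is homeomorphic to $M_f$ away from the $1$--skeleton. On the interior of each cell $\phi_i$ is already a CR chart (in fact an embedding, so ramification order one), and across each face the coordinate transition function is the CR transformation $G_j$, which satisfies $G_j \circ \phi_i = \phi_k$ on the common face. The monotonicity hypothesis on the $G_j$ is exactly what guarantees that, after passing to a neighbourhood of each face, the two embedded cells meet only along that face, so the union $\phi_i(\sigma_i) \cup G_j^{-1}(\phi_k(\sigma_k))$ is an embedded neighbourhood of a point on the face and furnishes an honest (unramified) CR chart there.

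The work is then concentrated at the edges, and this is the step I expect to be the main obstacle. Fix an edge $e$ with valence $N_e + 1$, with the cyclically ordered faces $F_0^e, \dots, F_{N_e}^e$ around it and the corresponding realised face pairings $G_0^e, \dots, G_{N_e}^e$. Travelling once around $e$ along the loop $\gamma_e$, the successive charts $\phi_i$ are related on overlaps by the $G_j^e$; composing them produces the geometric holonomy $\prod_{j=0}^{N_e} G_{N_e-j}^e$, which by hypothesis is the identity. I would argue that this allows one to develop a neighbourhood of (a point of the interior of) $e$ into $\overline{\H}$ by successively applying the inverse transition maps: starting from the chart on one cell adjacent to $e$, extend across $F_0^e$ using $(G_0^e)^{-1}$, then across $F_1^e$, and so on; triviality of the geometric holonomy means that after going all the way around we land back on the original chart, so the developed images of the cells around $e$ fit together consistently to cover a full punctured neighbourhood of the image of $e$. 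The image of $e$ itself lies on a common edge of all the realised cells — a segment of a $\CC$--circle — and the total angle swept out around it is some integer multiple $2N\pi$ of $2\pi$; after conjugating by a CR transformation sending that $\CC$--circle to the Heisenberg $t$--axis, the local picture is exactly that of the standard CR branching map $\xi(z,t) = (z^N,t)$. This produces a chart $\phi_e$ on a neighbourhood of $e$ which is a branched covering with branch locus $e$ and which agrees with each $\phi_i$ on the overlap, up to the transition functions $G_j^e$.

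Finally I would assemble everything: the collection consisting of the interior charts $\phi_i$, the across-face charts built from monotonicity, and the edge charts $\phi_e$ forms an open cover of $M_f$; on every pairwise overlap the transition function is, by construction, a composition of the $G_j$'s and hence an element of $\PU(2,1)$, and it intertwines the two charts. This is precisely a branched CR cover in the sense defined above, so its equivalence class is a branched CR structure on $M_f$, completing the proof. The one point demanding genuine care is the edge argument: one must check that the developed images of the cells around $e$ really do tile a neighbourhood of $e$ without overlap (again using monotonicity of the face pairings, now propagated around the whole ribbon of cells) and that the resulting total angle is a positive integer multiple of $2\pi$, so that the local model $\xi$ with $N \in \ZZ\setminus\{0\}$ genuinely applies; the existence of the developing map and holonomy representation, and the precise value of $N$ in terms of the valence, are then formal consequences, to be taken up in the explicit constructions of \S\ref{sec:CR_structure_bundles} and the ramification computations of \S\ref{sec:property_structures}.
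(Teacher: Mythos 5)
Your proposal is correct and takes the same approach as the paper, which sketches the argument in the discussion immediately preceding the lemma (interior charts from the embeddings $\phi_i$, face charts from monotonicity of the $G_j$, and edge charts from triviality of the geometric holonomy, modelled on the standard branching map $\xi$) and defers the technical details of the edge construction to Yoshida's analogous argument for hyperbolic structures on triangulated manifolds. Your write-up is somewhat more explicit and correctly isolates the one delicate point --- verifying that the developed cells around an edge genuinely tile a punctured neighbourhood so that the local model is a branched covering of positive integer degree --- which the paper handles by citation.
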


In a similar fashion to ideal triangulations, the ideal cell decomposition $\cell_f$ we are going to construct is the complement of the $0$--skeleton of a CW complex, which is also called $\cell_f$. This CW complex is topologically homeomorphic to the \emph{end-compactification} of $M_f$. It has a single vertex, which is the only non-manifold point. When talking about (ideal) cells in $\cell_f$, it will be convenient to consider the $0$--skeleton as a point of reference, but we will not always underline that it is not actually part of the decomposition of $M_f$. Moreover, we are often going to drop the word ``ideal'' when it is clear from the context.\\

A \emph{finite geometric realisation} of $\cell_f$ in $\overline{\H}$ is a geometric realisation $\{ \phi_i, G_j \}$ whose embeddings $\phi_i : \sigma_i \rightarrow \overline{\H}$ extend to the $0$--skeleton. Finite geometric realisations are slightly easier to deal with, as we can use the image of the $0$--skeleton as reference points for the cells. Let $\widetilde{\cell_f}$ be the ideal cell decomposition of the universal cover $\widetilde{M_f}$ induced by $\cell_f$. If $\{ \phi_i, G_j \}$ is a finite geometric realisation with trivial geometric holonomy around each edge, then it defines a branched CR structure, represented by some pair $(\dev,\hol)$ of developing map and holonomy representation. By finiteness, the developing map $\dev : \widetilde{\cell_f} \rightarrow \overline{\H}$ extends equivariantly to the $0$--skeleton $\widetilde{\cell_f}^{(0)}$. More precisely, if $\dev^{(0)}$ is the restriction of $\dev$ to $\widetilde{\cell_f}^{(0)}$, then
\begin{equation*}
\hol(\gamma) \cdot \dev^{(0)}(x) = \dev^{(0)}( \gamma \cdot x), \qquad \gamma \in \pi_1(M_f), \quad x \in \widetilde{\cell_f}^{(0)}.
\end{equation*}


\subsection{The figure eight knot complement}
\label{subsec:figure_eight_complement}

The \emph{figure eight knot complement} $K_8$ is the $3$--manifolds obtained by removing a closed tubular neighbourhood of the figure eight knot from the three-sphere. Topologically, it is homeomorphic to the once-punctured torus bundle associated to the flip sequence $\bw_8 = \R \L$. The corresponding monodromy ideal triangulation $\tri_8$ has two tetrahedra: $\sigma_0^{\R}$ of type $\R$ and $\sigma_1^{\L}$ of type $\L$ (see Figure~\ref{fig:m004_RL}). As a cyclic word, $\bw_8$ has a subsequence $\L \R \L$ and a subsequence $\R \L \R$, corresponding to the two edges $e_{\R}$ and $e_{\L}$ of $\tri_8$ respectively (cf. Lemma~\ref{lem:edge_ribbon}). Both edges have valence six. The ribbon of tetrahedra around $e_{\R}$ is $\sigma_0^{\R}, \sigma_1^{\L}, \sigma_0^{\R}, \sigma_1^{\L}$, as depicted in Figure~\ref{fig:m004_around_edge}.\\

\begin{minipage}[ht]{0.55\textwidth}
	\centering
	\includegraphics[width=\textwidth]{./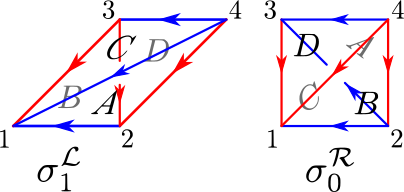}
	\captionof{figure}{The monodromy ideal triangulation of the figure eight knot complement $K_8$.}
	\label{fig:m004_RL}
\end{minipage}\hspace{1cm}
\begin{minipage}[ht]{0.35\textwidth}
	\centering
	\includegraphics[width=0.7\textwidth]{./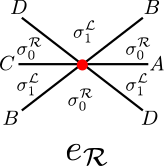}
	\captionof{figure}{The ribbon of tetrahedra around the red edge $e_{\R}$, viewed from the vertex $\sigma_0^{\R}(4)$.}
	\label{fig:m004_around_edge}
\end{minipage}

Now we construct a branched CR structure on $K_8$, as a preliminary example for the general case in~\S\ref{subsec:general_case}. The structure we are going to describe here was first discovered by Falbel~\cite{Falbel-2008}.\\

Let $\cell_8$ be the cell decomposition obtained from the following manipulations on the triangulation $\tri_8$.
\begin{enumerate}[label=\textnormal{(\arabic*)}]
	\item \label{item:decomp_type_R} (Figure~\ref{fig:decomposition_type_R}) We subdivide the face $\sigma_0^{\R}(134)$ of the tetrahedron $\sigma_0^{\R}$ into two $2$--cells, by introducing a $1$--cell with endpoints $\left\{\sigma_0^{\R}(1), \sigma_0^{\R}(4) \right\}$. The two $2$--cells thus obtained are combinatorially a triangle and a bigon. Similarly, we subdivide $\sigma_0^{\R}(234)$ by placing a $1$--cell with endpoints $\left\{\sigma_0^{\R}(2), \sigma_0^{\R}(4)\right\}$. Finally, we split the tetrahedron $\sigma_0^{\R}$ into two $3$--cells, by introducing a triangular $2$--cell with endpoints $\left\{\sigma_0^{\R}(1), \sigma_0^{\R}(2), \sigma_0^{\R}(4)\right\}$. Whence $\sigma_0^{\R}$ is subdivided into two $3$--cells:  $\sigmah_0^{\R}$ with vertices $\left\{\sigmah_0^{\R}(1), \sigmah_0^{\R}(2), \sigmah_0^{\R}(3), \sigmah_0^{\R}(4)\right\}$ is combinatorially isomorphic to a simplex, and $\sigmah_0^S$ with vertices $\left\{ \sigmah_0^S(1), \sigmah_0^S(2), \sigmah_0^S(4) \right\}$ is of the combinatorial type of a slab (cf.~\S\ref{subsec:CR_tetrahedra_slabs}).
	
	\item \label{item:decomp_type_L_1} (Figure~\ref{fig:decomposition_type_L}) Similar to above, we subdivide $\sigma_1^{\L}$ into two $3$--cells by introducing
	a $2$--cell inside the tetrahedron bounded by two $1$--cells with endpoints $\left\{\sigma_1^{\L}(2), \sigma_1^{\L}(4)\right\}$. They are embedded in the faces $\sigma_1^{\L}(124)$ and $\sigma_1^{\L}(234)$ respectively. Thus $\sigma_1^{\L}$ is decomposed into two $3$--cells $\sigmah_1^{\L} \cup \sigmah_1^W$. The former, $\sigmah_1^{\L}$ has four triangular faces and a bigon. The latter $\sigmah_1^W$ is of the combinatorial type of a \emph{wedge}, the CW complex obtained by quotienting a face of a $3$--simplex to a point. Its set of vertices is $\left\{\sigmah_0^W(2), \sigmah_0^W(4)\right\}$.
	
	\item \label{item:decomp_type_L_2} (Figure~\ref{fig:decomposition_type_L}) We deformation retract the wedge $\sigmah_1^W$ onto the bigonal face bounded by the red and the black edge. Simultaneously, we collapse the bigonal face of $\sigmah_1^{\L}$ into the black edge, transforming $\sigmah_1^{\L}$ back into a $3$--simplex. Finally, we remove the retracted wedge from the decomposition. As a consequence, the green edge and the black edge of $\sigmah_0^S$ are now identified (cf.~Figure~\ref{fig:decomposition_type_R} and Figure~\ref{fig:m004_cell_decomposition}).
\end{enumerate}

A few remarks are in order. Up to step~\ref{item:decomp_type_L_1}, the subdivisions of $\sigma_0^{\R}$ and $\sigma_1^{\L}$ agree along the faces, hence they form a well defined cell decomposition of $\tri_8$. The importance of this step relies on the fact that the new cell decomposition has more edges than $\tri_8$, hence a larger set where we can possibly branch on. On step~\ref{item:decomp_type_L_2}, we flatten the $3$--cell $\sigmah_1^W$ and remove it. This does not change the topology of the complex because a neighbourhood of the red edge $e_{\R}$ contains other $3$--cells other than $\sigmah_1^W$. In the end we have three $3$--cells $\sigmah_0^{\R}, \sigmah_0^S, \sigmah_1^{\L}$, two of which are of the combinatorial type of a tetrahedron and one of which is a slab (see Figure~\ref{fig:m004_cell_decomposition}). They glue to form a CW complex $\cell_8$, which is a cell decomposition of $K_8$. Step~\ref{item:decomp_type_L_2} is crucial because, by removing the wedge $\sigmah_1^W$ from the decomposition, we avoid the problem of having to geometrically realise it in CR space by an embedding. We remark that in~\cite{Falbel-2008}, Falbel develops this wedge into a flat bigon.\\

\begin{figure}[!t]
	\centering
	\includegraphics[height=3cm]{./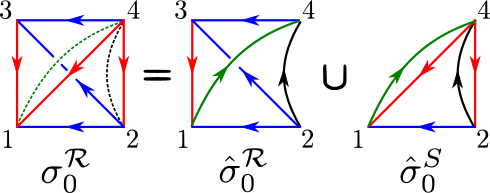}
	\captionof{figure}{The tetrahedron $\sigma_0^{\R}$ is subdivided into two $3$--dimensional cells, of the combinatorial type of a tetrahedron $\sigmah_0^{\R}$ and a slab $\sigmah_0^S$.}
	\label{fig:decomposition_type_R}
\end{figure}

\begin{figure}[!t]
	\centering
	\includegraphics[height=3cm]{./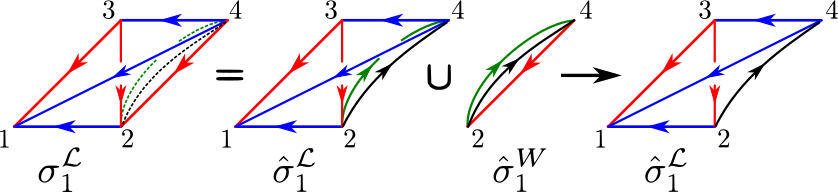}
	\captionof{figure}{The tetrahedron $\sigma_1^{\L}$ is decomposed into two $3$--cells, one of wich is a wedge $\sigmah_1^W$. The wedge is collapse and removed, while the other $3$--cell is deformed back into a tetrahedron $\sigmah_1^{\L}$.}
	\label{fig:decomposition_type_L}
\end{figure}

\begin{figure}[!t]
	\centering
	\includegraphics[height=3cm]{./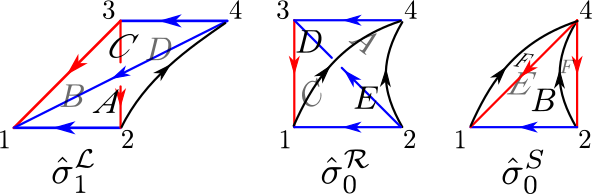}
	\captionof{figure}{The cell decomposition $\cell_8$ of the figure eight knot complement $K_8$.}
	\label{fig:m004_cell_decomposition}
\end{figure}

The slab $\sigmah_0^S$ has two bigonal faces, with endpoints $\left\{ \sigmah_0^S(1), \sigmah_0^S(2) \right\}$ and $\left\{ \sigmah_0^S(2), \sigmah_0^S(4) \right\}$. Since it would be ambiguous to refer to the edges of $\sigmah_0^S$ by their vertices, we fix the convention that $\sigmah_0^S(14)$ and $\sigmah_0^S(24)$ are the edges belonging to the face shared with $\sigmah_0^{\R}$, while $\sigmah_0^S(41)$ and $\sigmah_0^S(42)$ are the others. We will say more about these choices below.\\

We consider the following finite geometric realisation of $\cell_8$ in $\overline{\H}$. Let $\tetA, \tetB$ and $\slab_k$ be the two standard symmetric tetrahedra and the slab defined in~\S\ref{subsec:CR_tetrahedra_slabs}. The geometric realisations of the ideal cells are the combinatorial isomorphisms defined by
\begin{alignat*}{3}
& \phi_0^{\R} : \sigmah_0^{\R} \rightarrow \tetA, \qquad \qquad && \phi_0^S :  \sigmah_0^S \rightarrow \slab_1 \qquad \qquad \text{ and } \qquad && \phi_1^{\L} : \sigmah_1^{\L} \rightarrow \tetB\\[0.25cm]
& \phi_0^{\R}\left( \sigmah_0^{\R}(1) \right) := P_1, && \phi_0^S\left( \sigmah_0^S(1) \right) := P_1, && \phi_1^{\L}\left( \sigmah_1^{\L}(1) \right) := P_1,\\
& \phi_0^{\R}\left( \sigmah_0^{\R}(2) \right) := P_2, && \phi_0^S\left( \sigmah_0^S(2) \right) := P_2, && \phi_1^{\L}\left( \sigmah_1^{\L}(2) \right) := P_2,\\
& \phi_0^{\R}\left( \sigmah_0^{\R}(3) \right) := P_3, && \phi_0^S\left( \sigmah_0^S(4) \right) := P_4, && \phi_1^{\L}\left( \sigmah_1^{\L}(3) \right) := P_3,\\
& \phi_0^{\R}\left( \sigmah_0^{\R}(4) \right) := P_4, &&  && \phi_1^{\L}\left( \sigmah_1^{\L}(4) \right) := P_4.\\
\end{alignat*}
We remark that $\phi_0^S$ maps the edges $\sigmah_0^S(14)$ and $\sigmah_0^S(24)$ to the segments of $\CC$--circles going from $P_1$ and $P_2$, respectively, to $P_4$. Similarly, $\sigmah_0^S(41)$ and $\sigmah_0^S(42)$ are mapped to the segments of $\CC$--circles going from $P_4$ to $P_1$ and $P_2$, respectively.\\

The geometric realisations of the face pairings depicted in Figure~\ref{fig:m004_cell_decomposition} are the matrices $G_j$ defined in~\S\ref{subsec:CR_tetrahedra_slabs}, the identity matrix $I$ and a combination thereof. More precisely,
\begin{alignat*}{3}
& A : \sigmah_1^{\L}(124) \rightarrow \sigmah_0^{\R}(324) \quad && \text{ is realised by } \quad &&G_2 : \tetB \rightarrow \tetA, \\
& B : \sigmah_0^S(124) \rightarrow \sigmah_1^{\L}(123) && \text{ is realised by } && G_4 : \slab_1 \rightarrow \tetB, \\
& C : \sigmah_1^{\L}(134) \rightarrow \sigmah_0^{\R}(132) && \text{ is realised by } && G_1 : \tetB \rightarrow \tetA, \\
& D : \sigmah_0^{\R}(134) \rightarrow \sigmah_1^{\L}(234) && \text{ is realised by } && G_3 : \tetA \rightarrow \tetB, \\
& E : \sigmah_0^{\R}(124) \rightarrow \sigmah_0^S(124) && \text{ is realised by } && I : \tetA \rightarrow \slab_1, \\
& F :
\begin{array}{c}
\sigmah_0^S(14) \rightarrow \sigmah_0^S(24)\\
\sigmah_0^S(41) \rightarrow \sigmah_0^S(42)
\end{array}
&& \text{ is realised by } && G_2 G_3 : \slab_1 \rightarrow \slab_1. \\
\end{alignat*}
The product $G_2 G_3$, namely the geometric realisation of $F$, maps the bigonal face $B'$ of $\slab_1$ to its other bigonal face $B_1$. The combinatorics of $\cell_8$ around the red $e_{\R}$, black $e_{\R}'$ and blue $e_{\L}$ edges are displayed in Figure~\ref{fig:m004_edge_holonomy}. One computes that the geometric holonomies are trivial:
$$
e_{\R}: \ \left( G_2G_3 \right)^{-1} G_4^{-1}G_3G_1G_4 = I, \qquad e_{\R}' : \ G_3I^{-1}\left( G_2G_3 \right)^{-1} I G_2 = I,
$$ \vspace{-0.5cm}
$$
e_{\L} : \ G_1^{-1}IG_4^{-1}G_2^{-1}G_1G_3G_2 = I.
$$

\begin{figure}[ht]
	\centering
	\includegraphics[height=3cm]{./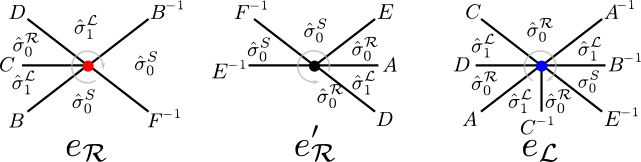}
	\captionof{figure}{The combinatorics around the red $e_{\R}$, black $e_{\R}'$ and blue $e_{\L}$ edges. The view is from the vertices $\sigmah_0^S(4), \sigmah_0^{\R}(4)$ and $\sigmah_0^{\L}(4)$ respectively.}
	\label{fig:m004_edge_holonomy}
\end{figure}

As per Lemma~\ref{lem:geometric_realisation_CR_structure}, this finite geometric realisation of $\cell_8$ in $\overline{\H}$ corresponds to a branched CR structure on $K_8$. By developing the cells in $\overline{\H}$, one finds that the order of the branching around the edges $e_{\R}$ and $e_{\R}'$ is one, while it is two around $e_{\L}$. These ramification orders were stated incorrectly in~\cite{Falbel-2008}, and corrected later in~\cite[Remark~6.1]{Falbel-Wang-2014}.\\


\subsection{General case}
\label{subsec:general_case}

Now we focus on the general case, to show that every hyperbolic once-punctured torus bundle $M_f$ admits a branched CR structure. In particular, we construct an ideal cell decomposition $\cell_f$ of $M_f$, and a finite geometric realisation of it in $\overline{\H}$, with trivial geometric holonomy around each edge.

\textbf{The ideal cell decomposition.} Let $f$ be an automorphism of the once punctured torus with two distinct positive real eigenvalues, and let $M_f$ be the corresponding hyperbolic once-punctured torus bundle. Suppose the flip sequence $\bw_f$ of $M_f$ has length $m$. Then the monodromy ideal triangulation $\tri_f$ of $M_f$ is made up of $m$ ideal tetrahedra $\sigma_0,\dots,\sigma_{m-1}$. The ideal cell decomposition $\cell_f$ of $M_f$ is obtained from $\tri_f$ by performing the three manipulations described in~\S\ref{subsec:figure_eight_complement} to each tetrahedron. We recall from~\S\ref{subsec:the_triangulation} that a tetrahedron is said to be of type $\R$ (resp. type $\L$) if the next tetrahedron is layered by a right (resp. left) layering. Thus we modify every tetrahedron of type $\R$ as in step~\ref{item:decomp_type_R}, and every tetrahedron of type $\L$ as in~\ref{item:decomp_type_L_1} and~\ref{item:decomp_type_L_2}. We provide a synthesis of those operations to refresh the notation.

\begin{enumerate}[label=\textnormal{(\arabic*)}]
	\item \label{item:summary_decomp_type_R} Every tetrahedron $\sigma_j^{\R}$ of type $\R$ is subdivided into two $3$--cells, along a newly introduced triangular $2$--cell with vertices $\left\{\sigma_j^{\R}(1), \sigma_j^{\R}(2), \sigma_j^{\R}(4)\right\}$. They are a tetrahedron $\sigmah_j^{\R}$ and a slab $\sigmah_j^S$.
	
	\item \label{item:summary_decomp_type_L_1} Every tetrahedron $\sigma_j^{\L}$ of type $\L$ is decomposed into two $3$--cells $\sigmah_j^{\L} \cup \sigmah_j^W$. The former $\sigmah_j^{\L}$ has four triangular faces, and a bigon where the wedge $\sigmah_j^W$ glues to.
	
	\item \label{item:summary_decomp_type_L_2} We deformation retract the wedge $\sigmah_j^W$ onto a bigonal face, then remove it. Simultaneously, we collapse the bigonal face of $\sigmah_j^{\L}$ into one edge, transforming $\sigmah_j^{\L}$ back into a $3$--simplex.
\end{enumerate}

Up to step~\ref{item:summary_decomp_type_L_1}, it is easy to check that the performed subdivisions agree along the faces of $\tri_f$, hence they form a well defined cell decomposition of $M_f$. 

Now consider the wedge $\sigma_j^W$. We claim that around each of its edges there is always at least one $3$--cell that is not a wedge. This is clear for two of its edges, as it glues to the tetrahedron $\sigma_j^T$. Call $e$ the remaining edge of $\sigma_j^W$. Let $\sigma_j^{\L}$ be the simplex of $\tri_f$ from which $\sigma_j^W$ is obtained, and let $\sigma_{j+1}^{\star}$ be the next tetrahedron that left layers on top of $\sigma_j^{\L}$. If $\sigma_{j+1}^{\star} = \sigma_{j+1}^{\L}$ is of type $\L$, then $\sigma_j^W$ glues to the wedge $\sigma_{j+1}^W$ around $e$. On the other hand, if $\sigma_{j+1}^{\star} = \sigma_{j+1}^{\R}$ is of type $\R$, then $\sigma_j^W$ glues to the slab $\sigma_{j+1}^S$ around $e$. Because $f$ has two distinct real eigenvalues, its flip sequence always contains at least one $\R$ and one $\L$ (cf.~\S\ref{subsec:flip_sequence}). It follows that around $e$ there is always at least one slab. This ends the proof of the claim.

On step~\ref{item:summary_decomp_type_L_2}, we flatten the wedges and remove them. It is a consequence of the claim that this does not change the topology of the complex. Thus in the end we have a CW complex $\cell_f$, consisting of three types of $3$--cells, two of which are of the combinatorial type of a tetrahedron and one of which is a slab. The complement of the $0$--skeleton is an ideal cell decomposition of $M_f$. We remark that $M_f$ has a more edges than $\cell_f$, and they are all going to be (non--trivially) branched (cf. \S\ref{subsec:branch_locus}).\\

To avoid introducing new terminology, we are going to make the following abuse of notation. Cells of $\cell_f$ coming from tetrahedra of $\tri_f$ of type $\R$ (resp. type $\L$) will also be referred to as cells of type $\R$ (resp. type $\L$). Moreover, if a tetrahedron $\sigma_j$ right layers (resp. left layers) on a tetrahedron $\sigma_{j-1}$ in $\tri_f$, then also the $3$--cells of $\cell_f$ obtained from $\sigma_j$ right layer (resp. left layer) on the cells obtained from $\sigma_{j-1}$.

\textbf{Combinatorics around the edges.} As mentioned in the example of figure eight knot complement, a slab $\sigmah_j^S$ has two bigonal faces, therefore it is ambiguous to refer to its edges by the $0$--skeleton. We avoid that by fixing the convention that $\sigmah_j^S(14)$ and $\sigmah_j^S(24)$ are the edges belonging to the face shared with the tetrahedron $\sigmah_j^\star$, $\star \in \{\L, \R\}$, while $\sigmah_j^S(41)$ and $\sigmah_j^S(42)$ are the others. The notation is motivated by the natural orientations of the edges of a geometric slab $\slab_k \subset \overline{\H}$.\\

Recall that $\pi$ is the natural quotient map from the disjoint union of the $m$ simplices of $\tri_f$ into $\tri_f$, defined by the face pairings. Let $\pih$ be the corresponding map for $\cell_f$. Then the \emph{valence} of an edge in $\cell_f$ is the size of its inverse image under $\pih$.

\begin{thm}
	\label{thm:edge_bijection_with_cells}
	Let $\cell_f^{(1)}$ be the set of $1$--cells in $\cell_f$. Let $A \subset \{0,\dots,m-1\}$ be the subset of indices such that $\sigmah_j^S$ is a slab of $\cell_f$. Then the quotient map $\pih$ restricts to a bijection
	$$
	\pih_r : \left\{ \sigmah_j^\star(14) \right\}_{j \in \{0\dots m-1\}} \cup \left\{ \sigmah_j^S(41) \right\}_{j \in A} \longrightarrow \cell_f^{(1)}, \qquad \star \in \{\L,\R \}.
	$$
\end{thm}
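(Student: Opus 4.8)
The plan is to count and match. The key invariant is that $M_f$, being the interior of a compact $3$–manifold with torus boundary, has zero Euler characteristic, so in any ideal CW decomposition the alternating sum of cell counts vanishes. First I would tally the cells of $\cell_f$ by type. For each of the $m$ tetrahedra of $\tri_f$ of type $\R$, manipulation \ref{item:summary_decomp_type_R} produces one tetrahedron $\sigmah_j^{\R}$ and one slab $\sigmah_j^S$; for each of type $\L$, manipulations \ref{item:summary_decomp_type_L_1}--\ref{item:summary_decomp_type_L_2} produce a single tetrahedron $\sigmah_j^{\L}$ (the wedge having been flattened and deleted). Writing $a = |A|$ for the number of $\R$–tetrahedra, there are $m$ three–cells of tetrahedral type and $a$ slabs, for $m + a$ three–cells total. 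Since $\cell_f$ has a single vertex, $\chi = 0$ forces $\#\{1\text{–cells}\} = \#\{2\text{–cells}\} - (m+a) + 1$, and a parallel count of the $2$–cells introduced by the manipulations (each $\R$–manipulation adds the triangular face $\{\sigma_j^{\R}(1),\sigma_j^{\R}(2),\sigma_j^{\R}(4)\}$ and subdivides two triangular faces, etc.) will show $\#\cell_f^{(1)} = m + a$. Thus the source set of $\pih_r$, which has cardinality $m + a$ by construction (one edge $\sigmah_j^\star(14)$ for each $j$, plus one edge $\sigmah_j^S(41)$ for each $j \in A$), has exactly the right size; it remains to show $\pih_r$ is injective, and surjectivity follows for free.

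For injectivity and to certify that $\pih_r$ lands in $\cell_f^{(1)}$ correctly, I would adapt the ribbon analysis of \S\ref{subsec:combinatorics_edges} to $\cell_f$. Lemma~\ref{lem:edge_ribbon} identifies each edge $e_j$ of $\tri_f$ with a unique ribbon of tetrahedra $\sigma_j, \dots, \sigma_{j+n_j+2}$, where $\sigma_j$ is the unique tetrahedron whose edge $(14)$ represents $e_j$. The manipulations only subdivide or flatten cells \emph{within} each $\sigma_j$, so one can track what happens to the edge $(14)$ and to the newly created interior edges ribbon by ribbon. I would argue: (i) for the \emph{bottom} tetrahedron $\sigma_j$ of a ribbon, the class of $\sigma_j(14)$ in $\cell_f$ is still represented by the edge $\sigmah_j^\star(14)$, and no two distinct bottoms give the same edge, because the defining property ``$(14)$ is a representative'' passes through the subdivision; (ii) the edges $\sigmah_j^S(41)$ for $j \in A$ are exactly the ``new'' edges created by step \ref{item:summary_decomp_type_R} inside $\R$–cells — after the wedge collapses of step \ref{item:summary_decomp_type_L_2} these get identified with certain collapsed edges coming from the adjacent $\L$–cells, so one must check that this identification does not merge two different $\sigmah_j^S(41)$'s, nor merge a $\sigmah_j^S(41)$ with some $\sigmah_k^\star(14)$. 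The bookkeeping here mirrors the figure–eight computation in \S\ref{subsec:figure_eight_complement}, where $\cell_8$ has two edges $e_\R, e_\L$ from the bottoms and one more edge $e_\R'$ — precisely $\sigmah_0^S(41)$ after the green/black identification of step \ref{item:decomp_type_L_2} — matching $m + a = 2 + 1 = 3$.

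The main obstacle is step (ii): controlling the edge identifications forced by collapsing the wedges. Flattening $\sigmah_j^W$ onto a bigonal face simultaneously collapses the bigonal face of $\sigmah_j^{\L}$ to an edge, so several a priori distinct $1$–cells of the pre–collapse complex are glued together, and one must verify that the resulting identification of edges is ``just right'' — enough to bring the edge count down to $m + a$, but not so much as to make two source elements of $\pih_r$ coincide. I would handle this by following a small loop around each collapsed edge, using the claim already proved in \S\ref{subsec:general_case} that every wedge edge $e$ has a slab $\sigmah_{j+1}^S$ around it (since $\bw_f$ contains both $\R$ and $\L$); that slab's edge $\sigmah_{j+1}^S(41)$ is the canonical representative, and the $\L$–subsequence $\R\L^{n}\R$ of $\bw_f$ from Lemma~\ref{lem:edge_ribbon} determines which wedges feed into it, giving a clean dual bookkeeping of ribbons for $\L$–type edges that parallels the $\R$–type ribbons. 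Once each edge of $\cell_f$ is shown to have a unique canonical representative in the source set of $\pih_r$, injectivity is immediate, and the cardinality count closes the argument.
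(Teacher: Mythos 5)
Your approach matches the paper's: establish injectivity via a ribbon-by-ribbon analysis of edge classes in $\cell_f$ (the paper packages exactly this content as Lemmas~\ref{lem:edge_bijection_L} and~\ref{lem:edge_bijection_R}, which write out the full equivalence class of each canonical representative $\sigmah_j^{\L}(14)$, $\sigmah_j^{\R}(14)$, $\sigmah_j^S(41)$ and verify they are pairwise distinct), and then close the argument by an Euler-characteristic count showing the source and target of $\pih_r$ have the same cardinality $m + |A|$. One numerical slip worth flagging: you write that $\cell_f$ has a single vertex and $\chi = 0$, which forces $E = F - C + 1$; combined with the face count $F = 2(m+|A|)$ (each $3$--cell has four faces, each face shared by exactly two $3$--cell face-slots) this gives $E = m + |A| + 1$, off by one from what you need. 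The correct accounting is that the end-compactification $\widehat{M_f}$ (the complex \emph{with} the cone vertex) has $\chi = 1$, or equivalently the ideal decomposition of $M_f$ itself (no vertex) has $\chi(M_f) = 0$; either gives $E = F - C = m + |A|$, which is what closes the cardinality comparison.
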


Theorem~\ref{thm:edge_bijection_with_cells} allows us to canonically pick a representative for each edge in $\cell_f$. For example, in the case of the figure eight knot complement in~\S\ref{subsec:figure_eight_complement}, the chosen representatives are $\sigmah_1^\L(14),\sigmah_0^\R(14)$ and $\sigmah_0^S(41)$ (respectively the blue, black and red edge in Figure~\ref{fig:m004_cell_decomposition}). Its proof is a consequence of the following two Lemmas, where we deduce the valence of edges in $\cell_f$ from their counterparts in $\tri_f$.

\begin{lem}
	\label{lem:edge_bijection_L}
	Let $\sigmah_j^{\L}$ be a $3$--cell of type $\L$ in $\cell_f$, corresponding to a tetrahedron $\sigma_j^\L$ in $\tri_f$. Let $2n_j +4$ be the valence of $\pi \left(\sigma_j^\L(14)\right)$. Then the equivalence class of $\sigmah_j^{\L}(14)$ in $\cell_f$ is
	{\fontsize{10}{11}\selectfont
		$$
		\left\{ \sigmah_j^{\L}(14), \ \left\{ \sigmah_{j+k}^{\R}(12), \ \sigma_{j+k}^{S}(12), \ \sigmah_{j+k}^{\R}(34)\right\}_ {k = 1,\dots,n_j}, \ \sigma_{j+n_j+1}^{\L}(12), \ \sigma_{j+n_j+1}^{\L}(34), \ \sigma_{j+n_j+2}^{\star}(23) \right\},
		$$	
	}
	where $\star \in \{\L,\R \}$. In particular $\pih \left(\sigmah_j^{\L}(14)\right)$ has valence $3n_j + 4$.
\end{lem}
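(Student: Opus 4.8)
The plan is a direct bookkeeping argument: starting from the ribbon of tetrahedra around the edge $e_j:=\pi(\sigma_j^{\L}(14))$ in $\tri_f$ provided by Lemma~\ref{lem:edge_ribbon}, I would track what each of the $2n_j+4$ edge-representatives in $\pi^{-1}(e_j)$ becomes after performing the three decomposition moves~\ref{item:summary_decomp_type_R}, \ref{item:summary_decomp_type_L_1}, \ref{item:summary_decomp_type_L_2} of \S\ref{subsec:figure_eight_complement} inside each tetrahedron of the ribbon. Recall that the subsequence $\L\R^{n_j}\L$ corresponds to a ribbon $\sigma_j^{\L},\sigma_{j+1}^{\R},\dots,\sigma_{j+n_j}^{\R},\sigma_{j+n_j+1}^{\L},\sigma_{j+n_j+2}^{\star}$ (indices mod $m$), with $\sigma_j^{\L}$ the bottom and $\sigma_{j+n_j+2}^{\star}$ the top, so the middle tetrahedra $\sigma_{j+1},\dots,\sigma_{j+n_j+1}$ each appear twice in a cross section of the ribbon and
$$
\pi^{-1}(e_j)=\left\{\sigma_j^{\L}(14)\right\}\ \cup\ \left\{\sigma_{j+k}(12),\ \sigma_{j+k}(34)\right\}_{k=1,\dots,n_j+1}\ \cup\ \left\{\sigma_{j+n_j+2}^{\star}(23)\right\}.
$$

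The heart of the argument consists of three local observations. First, every move performed inside an $\L$--tetrahedron (moves~\ref{item:summary_decomp_type_L_1}--\ref{item:summary_decomp_type_L_2}) only introduces, collapses or removes cells incident to $(24)$--type edges and faces; in particular it leaves the edges $(12)$, $(14)$, $(23)$, $(34)$ untouched, so these survive verbatim in the associated cell. Second, in an $\R$--tetrahedron (move~\ref{item:summary_decomp_type_R}) the separating triangular $2$--cell has vertex set $\{1,2,4\}$, hence it contains the edge $(12)$ but neither $(34)$ nor $(23)$; consequently $(34)$ and $(23)$ survive verbatim in the tetrahedral piece $\sigmah^{\R}$, while $(12)$ acquires exactly two representatives, one in $\sigmah^{\R}$ and one in the slab piece $\sigmah^{S}$, identified with each other along the separating face (compare the face pairing $E$, realised by the identity, in \S\ref{subsec:figure_eight_complement}). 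Third, each wedge $\sigmah_i^W$ that is deformation--retracted and removed in move~\ref{item:summary_decomp_type_L_2} carries only $(24)$--type $1$--cells (its two newly introduced edges and the old edge $(24)$), so its removal neither deletes nor merges any $(14)$--type edge representative; since moreover the moves are performed cell by cell, no cell lying outside the ribbon of $e_j$ becomes incident to $\pih(\sigmah_j^{\L}(14))$. Together these show that the link of $\pih(\sigmah_j^{\L}(14))$ in $\cell_f$ is obtained from the link of $e_j$ in $\tri_f$ merely by slicing off the slab inside each $\R$--tetrahedron sector.

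Assembling the contributions: the bottom $\sigma_j^{\L}$ yields $\sigmah_j^{\L}(14)$; each middle $\R$--tetrahedron $\sigma_{j+k}^{\R}$ ($k=1,\dots,n_j$) yields $\sigmah_{j+k}^{\R}(12)$, $\sigma_{j+k}^{S}(12)$ and $\sigmah_{j+k}^{\R}(34)$; the middle $\L$--tetrahedron $\sigma_{j+n_j+1}^{\L}$ yields $\sigma_{j+n_j+1}^{\L}(12)$ and $\sigma_{j+n_j+1}^{\L}(34)$; and the top $\sigma_{j+n_j+2}^{\star}$ yields $\sigma_{j+n_j+2}^{\star}(23)$. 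This is precisely the list in the statement, and it has $1+3n_j+2+1=3n_j+4$ elements, which is the asserted valence. I expect the genuinely delicate step to be the third observation, i.e.\ excluding spurious identifications created by the collapses in move~\ref{item:summary_decomp_type_L_2}: those collapses really do identify previously distinct $1$--cells (as with the ``green'' and ``black'' edges of the slab in the figure-eight case), so one must check carefully, and consistently with the convention distinguishing $\sigmah_j^{S}(14)$ from $\sigmah_j^{S}(41)$, that the edges being merged are always $(24)$--type representatives and hence never lie in $\pih^{-1}\!\big(\pih(\sigmah_j^{\L}(14))\big)$.
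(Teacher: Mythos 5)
Your proof is correct and takes essentially the same approach as the paper's: both start from the ribbon furnished by Lemma~\ref{lem:edge_ribbon}, list the $2n_j+4$ representatives of $e_j$ in $\tri_f$, and observe that the subdivision introduces a slab representative only at the $(12)$--edges of the $\R$--tetrahedra while leaving the neighbourhoods of the remaining representatives untouched. The paper's proof is terser (it defers the third of your observations to Figure~\ref{fig:edge_cell_decomp_L}), whereas you spell out explicitly why the collapses in step~(3) only merge $(24)$--type $1$--cells and hence never touch $\pi^{-1}(e_j)$ --- which is exactly the point the paper elides.
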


\begin{proof}
	By Lemma~\ref{lem:edge_ribbon}, the edge $\sigma_j^\L(14)$ corresponds to a unique subsequence $\L \R^{n_j} \L$ of $\bw_f$, for $n_j \geq 0$. In particular, $\sigma_j^\L$ is the bottom of a unique ribbon of tetrahedra
	$$
	\sigma_j^{\L} \ \sigma_{j+1}^{\R} \  \cdots  \ \sigma_{j+n_j}^{\R}  \ \sigma_{j+n_j+1}^{\L} \ \sigma_{j+n_j+2}^{\star} \ ,
	$$
	where $\star \in \{ \L,\R \}$ is undetermined. Whence $\sigma_j^\L(14)$ is identified with the edges
	$$
	\sigma_j^{\L}(14), \ \left\{ \sigma_{j+k}^{\R}(12), \ \sigma_{j+k}^{\R}(34)\right\}_ {k = 1,\dots,n_j}, \ \sigma_{j+n_j+1}^{\L}(12), \ \sigma_{j+n_j+1}^{\L}(34), \ \sigma_{j+n_j+2}^{\star}(23).
	$$
	The valence of its equivalence class in $\tri_f$ is $2n_j + 4$. In $\cell_f$, we introduce a slab around each edge $\sigma_{j+k}^{\R}(12)$, while neighbourhoods of the other edges glued to $e_j$ are unchanged (cf.~Figure~\ref{fig:edge_cell_decomp_L} for $j=0$). The statement of the Lemma follows.
\end{proof}

\begin{lem}
	\label{lem:edge_bijection_R}
	Let $\sigmah_j^{\R}$ and $\sigmah_j^S$ be $3$--cells of type $\R$ in $\cell_f$, corresponding to a tetrahedron $\sigma_j^\R$ in $\tri_f$. Let $2n_j +4$ be the valence of $\pi \left(\sigma_j^\R(14)\right)$. Then the equivalence class of $\sigmah_j^{\R}(14)$ in $\cell_f$ is
	{\fontsize{11}{11}\selectfont
		$$
		\left\{\sigmah_j^{S}(14), \ \sigmah_j^{\R}(14), \ \left\{ \sigmah_{j+k}^{\L}(24)\right\}_ {k = 1,\dots,n_j}, \ \sigmah_{j+n_j+1}^{\R}(24), \ \sigmah_{j+n_j+1}^{S}(24)\right\}.
		$$
	}
	Similarly, the equivalence class of $\sigmah_j^S(41)$ in $\cell_f$ is
	{\fontsize{11}{11}\selectfont
		$$
		\left\{ \sigmah_j^{S}(41), \ \left\{ \sigmah_{j+k}^{\L}(13)\right\}_ {k = 1,\dots,n_j}, \ \sigmah_{j+n_j+1}^{\R}(42), \ \sigmah_{j+n_j+2}^{\star}(23), \ \sigmah_{j+n_j+1}^{S}(42) \right\}.
		$$
	}
	In particular, both $\pih \left(\sigmah_j^{\R}(14)\right)$ and $\pih \left(\sigmah_j^{\R}(14)\right)$ have valence $n_j + 4$.
\end{lem}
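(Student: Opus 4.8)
The plan is to carry out the $\R$-mirror of the proof of Lemma~\ref{lem:edge_bijection_L}: first recover the ribbon of tetrahedra around $\sigma_j^\R(14)$ in the monodromy triangulation $\tri_f$, and then follow the three subdivision steps \ref{item:summary_decomp_type_R}--\ref{item:summary_decomp_type_L_2} to see how the edge class of $\pi(\sigma_j^\R(14))$ breaks up in $\cell_f$. Concretely, I would apply Lemma~\ref{lem:edge_ribbon} in the case where the bottom of the ribbon has type $\R$ (the mirror picture of \S\ref{subsec:combinatorics_edges}, cf.~Figure~\ref{fig:edge_section_R}): the edge $\sigma_j^\R(14)$ determines the subsequence $\R\L^{n_j}\R$ of $\bw_f$ and the unique ribbon
$$
\sigma_j^\R \ \sigma_{j+1}^\L \ \cdots \ \sigma_{j+n_j}^\L \ \sigma_{j+n_j+1}^\R \ \sigma_{j+n_j+2}^\star,
$$
where $\star\in\{\L,\R\}$, so that in $\tri_f$ the equivalence class of $\pi(\sigma_j^\R(14))$ consists of $\sigma_j^\R(14)$, the two ``diagonal'' edges $\sigma_{j+k}^\L(13),\sigma_{j+k}^\L(24)$ of each loop tetrahedron for $1\le k\le n_j$, the two diagonal edges $\sigma_{j+n_j+1}^\R(13),\sigma_{j+n_j+1}^\R(24)$, and the top edge $\sigma_{j+n_j+2}^\star(23)$ --- altogether $2n_j+4$ edges, in agreement with Lemma~\ref{lem:edge_ribbon}.

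Next I would record the effect of the subdivision on these edges. By step~\ref{item:summary_decomp_type_R}, a type-$\R$ cell $\sigma^\R$ acquires a new $1$--cell between its vertices $1$ and $4$ and a new one between $2$ and $4$: the old edge $\sigma^\R(14)$ survives as the bigonal edge $\sigmah^S(41)$, the new one is $\sigmah^\R(14)=\sigmah^S(14)$, the old $\sigma^\R(24)$ survives as $\sigmah^S(42)$ and the new one is $\sigmah^\R(24)=\sigmah^S(24)$, while the other four edges of $\sigmah^\R$ are the old ones. By steps~\ref{item:summary_decomp_type_L_1}--\ref{item:summary_decomp_type_L_2}, each type-$\L$ cell $\sigma^\L$ of the ribbon is turned back into a simplex $\sigmah^\L$, the wedge-collapse gluing one pair of its edges. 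The decisive observation is that the two new edges in the bottom cell $\sigma_j^\R$ cut the old $2n_j+4$ edge class into exactly two pieces, each of which meets every loop tetrahedron once rather than twice. Following the new edge $\sigmah_j^\R(14)=\sigmah_j^S(14)$ around, one passes in cyclic order through the cells $\sigmah_j^S,\ \sigmah_j^\R,\ \sigmah_{j+1}^\L,\ \dots,\ \sigmah_{j+n_j}^\L,\ \sigmah_{j+n_j+1}^\R,\ \sigmah_{j+n_j+1}^S$, picking up precisely the first list of the statement; following the surviving old edge $\sigmah_j^S(41)$ around, one passes through the complementary cells and recovers the old far end $\sigmah_{j+n_j+2}^\star(23)$, picking up the second list. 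Each class then has $n_j+4$ entries, which is the assertion; combined with Lemma~\ref{lem:edge_bijection_L} this also establishes the bijection of Theorem~\ref{thm:edge_bijection_with_cells}.

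The bookkeeping itself is routine; the part requiring genuine care --- and the main obstacle --- is getting the conventions right in two places. First, one must transcribe the mirror (type-$\R$) ribbon from Figure~\ref{fig:edge_section_R} correctly, i.e.\ that the diagonal edges of the loop tetrahedra in this case are the $(13)$- and $(24)$-edges rather than the $(12)$- and $(34)$-edges appearing in Lemma~\ref{lem:edge_bijection_L}. Second, one must track how each face pairing of $\tri_f$ splits once its faces are subdivided, and in particular how the wedge-collapse of step~\ref{item:summary_decomp_type_L_2} on the intermediate type-$\L$ cells re-glues their edges; for this I would invoke the fact proved in \S\ref{subsec:general_case} that around the free edge of every removed wedge there is always a slab, so that the collapse does not change the topology and the resulting identifications are exactly those inherited from $\tri_f$ together with the new gluings introduced by step~\ref{item:summary_decomp_type_R}. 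A cross-section picture analogous to the one used in the proof of Lemma~\ref{lem:edge_bijection_L} (cf.~Figure~\ref{fig:edge_cell_decomp_L}) makes both chases transparent.
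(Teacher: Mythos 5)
Your proposal is correct and follows essentially the same route as the paper: invoke Lemma~\ref{lem:edge_ribbon} to recover the type-$\R$ ribbon and the $2n_j+4$ representatives of $\pi(\sigma_j^\R(14))$ in $\tri_f$ (with the loop tetrahedra contributing their $(13)$- and $(24)$-edges), then observe that the subdivision of the bottom $3$--cell splits this class into the two bigonal edges $\sigmah_j^S(41)$ and $\sigmah_j^S(14)=\sigmah_j^\R(14)$, with the two sides of each loop tetrahedron equidistributed between them. The paper's proof is terser --- it states the splitting and equidistribution and appeals to the cross-section Figure~\ref{fig:edge_cell_decomp_R} --- whereas you spell out the per-step bookkeeping, but the underlying argument is identical.
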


\begin{proof}
	As in the proof of Lemma~\ref{lem:edge_bijection_L}, the edge $\sigma_j^\R(14)$ corresponds to a unique  subsequence $\R \L^{n_j} \R$ in $\bw_f$, for $n_j \geq 0$. The ribbon of tetrahedra around its edge class in $\tri_f$ is
	$$
	\sigma_j^{\R} \ \sigma_{j+1}^{\L} \  \cdots  \ \sigma_{j+n_j}^{\L}  \ \sigma_{j+n_j+1}^{\R} \ \sigma_{j+n_j+2}^{\star}.
	$$
	In particular $\sigma_j^\R(14)$ is glued to the $2n_j + 4$ edges
	$$
	\sigma_j^{\R}(14), \ \left\{ \sigma_{j+k}^{\L}(13), \ \sigma_{j+k}^{\L}(24)\right\}_ {k = 1,\dots,n_j}, \ \sigma_{j+n_j+1}^{\R}(13), \ \sigma_{j+n_j+1}^{\R}(24), \ \sigma_{j+n_j+2}^{\star}(23).
	$$
	In $\cell_f$, the cell $\sigma_j^\R(14)$ splits into the bigon with boundary $\sigmah_j^S(14)$ and $\sigmah_j^S(41)$. The two loops of the ribbon of tetrahedra around $\sigma_j^\R(14)$ are split and equidistributed around those two edges (cf.~Figure~\ref{fig:edge_cell_decomp_R} for $j=0$). The statement of the Lemma follows.
\end{proof}

\begin{figure}[!t]
	\begin{minipage}[t]{0.45\textwidth}
		\centering
		\includegraphics[height=4cm]{./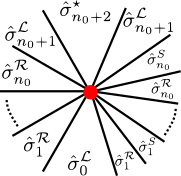}
		\captionof{figure}{The cross section of a neighbourhood of $\pih\left(\sigmah_0^{\L}(14)\right)$ in $\cell_f$, viewed from the vertex $\sigmah_0^{\L}(4)$.}
		\label{fig:edge_cell_decomp_L}
	\end{minipage} \hspace{1cm}
	\begin{minipage}[t]{0.45\textwidth}
		\centering
		\includegraphics[height=4cm]{./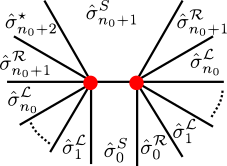}
		\captionof{figure}{The edge splits into two edges, $\pih \left(\sigmah_0^S(41)\right)$ on the left and $\pih \left(\sigmah_0^S(14)\right)$ on the right. The view is from the vertex $\sigmah_0^S(4)$}
		\label{fig:edge_cell_decomp_R}
	\end{minipage}
\end{figure}

\begin{proof}[Proof of Theorem~\ref{thm:edge_bijection_with_cells}]
	First we notice that $\pih_r$ is well defined, as it is the restriction of the natural quotient map $\pih$. Injectivity follows from Lemma~\ref{lem:edge_bijection_L} and Lemma~\ref{lem:edge_bijection_R}, because the equivalence classes of $\sigmah_j^{\L}(14),\sigmah_j^{\R}(14)$ and $\sigmah_j^S(41)$ are distinct.
	
	By a topological argument, we deduce that the Euler characteristic of $\cell_f$ is zero. Therefore $\cell_f$ has as many $3$--cells as $1$--cells. It follows that $\pih_r$ is an injective map between finite sets with the same sizes, thus it is a bijection.
\end{proof}

\textbf{The finite geometric realisation in $\overline{\H}$.} A finite geometric realisation of $\cell_f$ consists of embeddings $\phi_i^{\L},\phi_i^{\R}, \phi_i^S$ of the $3$--cells into $\overline{\H}$, and geometric realisations $G_j \in \PU(2,1)$ of the face pairings.\\

Let $\sigmah_j^\star$ be a tetrahedron of $\cell_f$, $\star \in \{ \L,\R \}$. The development of $\sigmah_j^\star$ depends on the tetrahedron it layers on. More precisely, let $\sigmah_{j-1}$ be the tetrahedron in $\cell_f$ on top of which $\sigmah_j^\star$ layers. Then the geometric realisation $\phi_j^\star$ of $\sigmah_j^\star$ is the combinatorial isomorphism
$$
\phi_j^\star :
\begin{cases}
\sigmah_j^\star \rightarrow \tetB  & \text{if } \ \sigmah_{j-1}  = \sigmah_{j-1}^\R  \text{ is of type } \R,\\
\sigmah_j^\star \rightarrow \tetA & \text{if } \ \sigmah_{j-1}  = \sigmah_{j-1}^\L  \text{ is of type } \L,
\end{cases}
\qquad
\text{where}
\qquad
\begin{array}{c}
\phi_j^\star\left( \sigmah_j^\star(1) \right) := P_1,\\[0.2cm]
\phi_j^\star\left( \sigmah_j^\star(2) \right) := P_2,\\[0.2cm]
\phi_j^\star\left( \sigmah_j^\star(3) \right) := P_3,\\[0.2cm]
\phi_j^\star\left( \sigmah_j^\star(4) \right) := P_4.
\end{array}
$$

Now let $\sigmah_j^S$ be a slab of $\cell_f$. Let $k_j +4$ be the valence of the edge $\pih \left(\sigmah_j^S(24)\right)$. Then the geometric realisation $\phi_j^S$ of $\sigmah_j^S$ is the combinatorial isomorphism
$$
\phi_j^S : \sigmah_j^S \rightarrow \slab_{k_j} \qquad
\text{where}
\qquad
\begin{array}{c}
\phi_j^S\left( \sigmah_j^S(1) \right) := P_1,\\[0.2cm]
\phi_j^S\left( \sigmah_j^S(2) \right) := P_2,\\[0.2cm]
\phi_j^S\left( \sigmah_j^S(4) \right) := P_4.
\end{array}
$$
More precisely, we require that $\phi_j^S\left( \sigmah_j^S(14) \right) = [P_1,P_4]$ and $\phi_j^S\left( \sigmah_j^S(24) \right) = [P_2,P_4]$. Thus the bigon with endpoints $\left\{ \sigmah_j^S(1),\sigmah_j^S(4)  \right\} $ is developed into
\begin{alignat*}{4}
&B' := &&\left( 1 + t e^{-i\frac{\pi}{6}}, s \right), \qquad &&t \in \RR_{>0} \cup \{\infty\}, \quad s \in \RR \cup \{\infty\},
\end{alignat*}
while the bigon with endpoints $\left\{ \sigmah_j^S(2),\sigmah_j^S(4)  \right\} $ is realised by
\begin{alignat*}{4}
&B_{k_j} := &&\left( -\omega + t e^{-i\frac{\pi}{6}(1-2{k_j})}, s \right), \qquad &&t \in \RR_{>0} \cup \{\infty\}, \quad s \in \RR \cup \{\infty\}.
\end{alignat*}
Both $B'$ and $B_k$ are foliated by vertical $\CC$--circles.\\

Most of the geometric realisations of the face pairings are uniquely determined by Lemma~\ref{lem:cartan_angle}. They are the CR transformations $G_i$ described in~\S\ref{subsec:CR_tetrahedra_slabs}. The remaining ones are either the identity matrix $I$, or products of the $G_i$'s. We describe them in more detail below. Let $\sigmah_j^\star$ be a tetrahedron of $\cell_f$, of type $\star \in \{ \L, \R \}$.

If $\phi_j^\star \left(\sigmah_j^\star\right) = \tetA$ is the standard symmetric tetrahedron of type $A$, then $\sigmah_j^\star$ layers on a tetrahedron $\sigmah_{j-1}^{\L}$ of type $\L$. In particular they share two pairs of faces. Let $\mathfrak{T}_X = \phi_{j-1}^{\L} \left(\sigmah_{j-1}^{\L}\right)$ for some $X \in \{A,B\}$. Then the geometric realisations of the face pairings between $\sigmah_{j-1}^{\L}$ and $\sigmah_j^\star$ are:
\begin{alignat*}{4}
& \sigmah_{j-1}^{\L}(134) \rightarrow \sigmah_j^{\star}(132) \quad && \text{ is realised by } \quad &&G_1 : \mathfrak{T}_X \rightarrow \tetA,\\
& \sigmah_{j-1}^{\L}(124) \rightarrow \sigmah_j^{\star}(324) \quad && \text{ is realised by } \quad &&G_2 : \mathfrak{T}_X \rightarrow \tetA.
\end{alignat*}

Now suppose $\phi_j^\star \left(\sigmah_j^\star\right) = \tetB$ is the standard symmetric tetrahedron of type $B$. In this case $\sigmah_j^\star$ layers on a tetrahedron $\sigmah_{j-1}^{\R}$ of type $\R$ and on a slab $\sigmah_{j-1}^{S}$. Let $\mathfrak{T}_X = \phi_{j-1}^{\R} \left(\sigmah_{j-1}^{\R}\right)$, for some $X \in \{A,B\}$, and let $\slab_{k_{j-1}} = \phi_{j-1}^S \left(\sigmah_{j-1}^S\right)$. Then the geometric realisations of the face pairings between $\sigmah_{j-1}^{\R}, \sigmah_{j-1}^{S}$ and $\sigmah_j^\star$ are
\begin{alignat*}{4}
& \sigmah_{j-1}^{\R}(134) \rightarrow \sigmah_j^{\star}(234) \quad && \text{ is realised by } \quad &&G_3 : \mathfrak{T}_X \rightarrow \tetB,\\
& \sigmah_{j-1}^S(124) \rightarrow \sigmah_j^{\star}(123) \quad && \text{ is realised by } \quad &&G_4 : \slab_{k_{j-1}} \rightarrow \tetB,\\
& \sigmah_{j-1}^{\R}(134) \rightarrow \sigmah_{j-1}^S(124) \quad && \text{ is realised by } \quad && I : \mathfrak{T}_X \rightarrow \slab_{k_{j-1}}.
\end{alignat*}

These cover all cases, except for the gluing maps between the bigonal faces of the slabs. Contrary to marked triangles, bigons in Heisenberg space can be identified via many CR transformations. Earlier in this section we showed that around each edge in $\cell_f$ there is at most one face pairing gluing two slabs along their bigons (cf.~Lemma~\ref{lem:edge_bijection_L} and Lemma~\ref{lem:edge_bijection_R}). Whence we are going to geometrically realise those face pairings so that the geometric holonomy around each edge is trivial. Under this condition, the choices turn out to be unique.

Consider the slab $\sigmah_{j}^S$. By Lemma~\ref{lem:edge_bijection_R}, the equivalence class of the edge $\sigmah_{j}^S(14)$ is
$$
\left\{\sigmah_j^{S}(14), \ \sigmah_j^{\R}(14), \ \left\{ \sigmah_{j+k}^{\L}(24)\right\}_ {k = 1,\dots,n_j}, \ \sigmah_{j+n_j+1}^{\R}(24), \ \sigmah_{j+n_j+1}^{S}(24)\right\}.
$$
Let $A_j\dots,A_{j+n_j+2}$ be the sequence of geometric realisations of the face pairings around $\pih \left(\sigmah_{j}^S(14)\right)$, starting from $\sigmah_j^{S}$ to $\sigmah_{j+n_j+1}^S$, travelling anticlockwise from the point of view of the vertex $\sigmah_{j}^S(4)$. So for example $A_j$ realises the face pairing between $\sigmah_{j}^S$ and $\sigmah_{j}^{\R}$, while $A_{j+n_j+2}$ corresponds to $\sigmah_{j+n_j+1}^{\R}$ and $\sigmah_{j+n_j+1}^S$ (cf.~Figure~\ref{fig:edge_cell_decomp_R} on the right). We remark that $\sigmah_{j+n_j+1}^S$ is geometrically realised by the slab $\slab_{n_j}$, because the edge $\pih\left( \sigmah_{j+n_j+1}^S(24)\right)$ has valence $n_j+4$.

\begin{lem}
	\label{lem:geometric_realisation_between_bigons}
	The matrix product $\prod_{k=0}^{n_j+2} A_{j+n_j+2-k}$ is a geometric realisation of the face pairing between $\sigmah_{j}^S$ and $\sigmah_{j+n_j+1}^S$. In particular, it identifies the bigon $B'$ of $\phi_j^S \left(\sigmah_{j}^S\right)$ with the bigon $B_{n_j}$ of $\phi_{j+n_j+1}^S \left(\sigmah_{j+n_j+1}^S\right)$.
\end{lem}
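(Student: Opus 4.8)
\textbf{Proof plan.} The plan is to unwind the product $P := \prod_{k=0}^{n_j+2} A_{j+n_j+2-k}$ into an explicit word in $G_1,\dots,G_4$ and the identity, simplify it, and then read off its action on $B'$ directly in Heisenberg coordinates.

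First I would make the chain of factors explicit. By Lemma~\ref{lem:edge_bijection_R} the $3$--cells meeting the edge $e := \pih\!\left(\sigmah_j^S(14)\right)$, in the cyclic order induced by $\gamma_e$ seen from the vertex $\sigmah_j^S(4)$, are
$$
\sigmah_j^S,\ \sigmah_j^{\R},\ \sigmah_{j+1}^{\L},\ \sigmah_{j+2}^{\L},\ \dots,\ \sigmah_{j+n_j}^{\L},\ \sigmah_{j+n_j+1}^{\R},\ \sigmah_{j+n_j+1}^S,
$$
and in every one of them $e$ appears as an edge incident to the vertex labelled $4$. Consecutive cells are glued along a $2$--cell containing $e$; matching each such face against the list of geometric realisations of face pairings from the construction (including the ``internal'' identity pairing $\sigmah_i^{\R}(124)\leftrightarrow\sigmah_i^S(124)$) identifies the factors uniquely. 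The pairing $\sigmah_j^S\to\sigmah_j^{\R}$ is the internal $I$; the pairing $\sigmah_j^{\R}\to\sigmah_{j+1}^{\L}$ is realised along $(134)\to(234)$, the unique layering face through $e$, hence equals $G_3$; each pairing $\sigmah_{j+i-1}^{\L}\to\sigmah_{j+i}^{\star}$ for $2\le i\le n_j+1$ is realised along $(124)\to(324)$ (again the unique layering face through $e$, these cells layering on a cell of type $\L$ and hence developing into $\tetA$), hence equals $G_2$; and the last pairing $\sigmah_{j+n_j+1}^{\R}\to\sigmah_{j+n_j+1}^S$ is again the internal $I$. Multiplying in the prescribed order collapses to
$$
P \;=\; I\cdot\underbrace{G_2\cdots G_2}_{n_j}\cdot G_3\cdot I \;=\; G_2^{\,n_j}\,G_3 .
$$

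Second I would verify the ``in particular'' clause by a short coordinate computation with $G_2^{\,n_j}G_3$. The transformation $G_3$ is the $\tfrac{\pi}{3}$ anticlockwise rotation of $\overline{\H}$ about the vertical $\CC$--circle through $P_3=(0,0)$; it fixes $P_4$, sends $P_1\mapsto P_2$, and carries $B'$ (whose $z$--projection is the ray from $1$ in direction $e^{-i\pi/6}$) to the ray from $-\omega$ in direction $e^{i\pi/6}$. The transformation $G_2$ is the $\tfrac{\pi}{3}$ clockwise $z$--plane rotation about $-\omega$ and fixes $P_2$ and $P_4$, so $G_2^{\,n_j}$ sends the last ray to the ray from $-\omega$ in direction $e^{i\frac{\pi}{6}(1-2n_j)}$ --- precisely the $z$--projection of $B_{n_j}$ --- while still fixing $P_4$ and sending $P_1\mapsto P_2$. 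Since $\phi_{j+n_j+1}^S\!\left(\sigmah_{j+n_j+1}^S\right)=\slab_{n_j}$ (the edge $\pih\!\left(\sigmah_{j+n_j+1}^S(24)\right)$ coincides with $e$, of valence $n_j+4$) and since $B'$ and every $B_k$ are foliated by vertical $\CC$--circles, which $P$ preserves because it fixes $\infty$, this shows $P$ identifies the $2$--cell $B'$ with the $2$--cell $B_{n_j}$ in the combinatorially correct way. Finally, monotonicity of $P$ as a face pairing between the two embedded slabs reduces, exactly as in the proof of Lemma~\ref{lem:face_pairings_are_monotone}, to a picture in the $z$--plane: one exhibits angular sectors at the cone points $1$ and $-\omega$, meeting only along an edge, that contain the projections of $P\!\left(\phi_j^S(\sigmah_j^S)\right)$ and of $\phi_{j+n_j+1}^S\!\left(\sigmah_{j+n_j+1}^S\right)$ respectively.

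I expect the main obstacle to be the first step: faithfully matching the abstract edge identifications recorded in Lemma~\ref{lem:edge_bijection_R} against the concrete matrices $G_1,\dots,G_4$ of the construction, i.e.\ tracking the layering directions along the ribbon and determining which of the two layering faces of each tetrahedron actually contains $e$, so that the long product genuinely telescopes to $G_2^{\,n_j}G_3$. Once this word is in hand, the remainder is a direct coordinate computation of the sort already performed for the figure eight knot in \S\ref{subsec:figure_eight_complement}, where $n_j=1$ and $P=G_2G_3$.
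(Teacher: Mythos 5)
Your proposal is correct and takes essentially the same route as the paper: you identify the factors of the product as $I, G_3, G_2,\dots,G_2, I$ from the ribbon around $\pih(\sigmah_j^S(14))$ (Lemma~\ref{lem:edge_bijection_R}), collapse the product to $G_2^{n_j}G_3$, and then read off its action on $B'$ from the explicit Heisenberg descriptions of $G_2$ and $G_3$ as vertical--$\CC$--circle--preserving rotations, exactly as in the paper's proof. Your closing remark on monotonicity also matches the observation the paper records immediately after the lemma.
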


\begin{proof}
	By construction, $A_j$ and $A_{j+n_j+2}$ are the identity matrix. On the other hand, $A_{j+1}~=~G_3$ and $A_{j+k} = G_2$, for all $k \in \{2,\dots,n_j+1\}$. Therefore
	$$
	\prod_{k=0}^{n_j+2} A_{j+n_j+2-k} = G_2^{n_j} G_3.
	$$
	We recall from~\S\ref{subsec:CR_tetrahedra_slabs} that the CR transformations $G_3$ and $G_2$ preserve vertical $\CC$--circles, and restrict to rotations on the $z$--plane. In particular, $G_3$ maps $B'$ to the bigon $B_0$ and $G_2$ maps $B_k$ to $B_{k+1}$. The Lemma follows.
\end{proof}

We remark that the face pairing $\prod_{k=0}^{n_j+2} A_{j+n_j+2-k}$ is monotone, thus this completes the construction of the finite geometric realisation of $\cell_f$. We conclude the section by showing that these geometric realisations are indeed branched CR structures.

\begin{thm}
	\label{thm:geometric_realisation_is_structure}
	The geometric holonomy around each edge in $\cell_f$ is trivial and therefore the geometric realisation defines a branched CR structure on $M_f$.
\end{thm}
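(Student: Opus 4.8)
The plan is to reduce the statement to a finite, completely combinatorial verification, exactly as was done for the figure eight knot complement in \S\ref{subsec:figure_eight_complement}, but now organized around the three families of edge classes classified in Lemma~\ref{lem:edge_bijection_L} and Lemma~\ref{lem:edge_bijection_R}. By Theorem~\ref{thm:edge_bijection_with_cells} every $1$--cell of $\cell_f$ has a unique representative among $\{\sigmah_j^\L(14)\}$, $\{\sigmah_j^\R(14)\}$, and $\{\sigmah_j^S(41)\}$, so it suffices to check triviality of the geometric holonomy around an edge of each of these three types. For each such edge, Lemma~\ref{lem:edge_bijection_L} or Lemma~\ref{lem:edge_bijection_R} already gives the explicit cyclic list of faces crossed by a small loop $\gamma_e$; I would fix the standard orientation and base point as in \S\ref{subsec:finite_geometric_realisations}, write down each face pairing in the list, and read off its geometric realisation from the dictionary established earlier in \S\ref{subsec:general_case} (the maps $G_1,G_2,G_3,G_4$, the identity $I$, and the bigon-gluing products $G_2^{n}G_3$ supplied by Lemma~\ref{lem:geometric_realisation_between_bigons}). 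The geometric holonomy is then the ordered product $\prod_{j} G_{N_e-j}^e$, and the claim is that it collapses to $I$.

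The computation organizes as follows. For an edge of type $\L$, i.e. $\pih(\sigmah_j^\L(14))$, the ribbon is $\L\R^{n_j}\L$ and the loop passes, in order, through: the face separating $\sigmah_j^\L$ from $\sigmah_{j+1}^\R$ (realised by $G_1$), then alternately through the two faces across each intermediate $\sigmah_{j+k}^\R$ and its attached slab $\sigmah_{j+k}^S$ (realised by copies of $I$ and $G_2$, together with the internal $E$-type identity between a tetrahedron of type $A$ and its slab), then through the faces around $\sigmah_{j+n_j+1}^\L$ and closing back at $\sigmah_{j+n_j+2}^\star$. Grouping the contributions, the product telescopes to an expression built from $G_1, G_1^{-1}, G_2, G_2^{-1}, G_3, G_3^{-1}, G_4, G_4^{-1}, I$ in which each generator that is introduced when the loop enters a region is cancelled by its inverse when the loop exits on the other side — this is precisely the phenomenon already exhibited in the three identities $(G_2G_3)^{-1}G_4^{-1}G_3G_1G_4 = I$ etc. in \S\ref{subsec:figure_eight_complement}. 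For an edge of type $\R$, the bigon in $\cell_f$ splits the old ribbon into two half-ribbons distributed around $\sigmah_j^S(14)$ and $\sigmah_j^S(41)$; here Lemma~\ref{lem:geometric_realisation_between_bigons} is the key input, since it guarantees that the bigon-to-bigon face pairing realising $\prod_k A_{j+n_j+2-k}$ is exactly $G_2^{n_j}G_3$, which is precisely what is needed for the products around $\sigmah_j^S(14)$ and $\sigmah_j^S(41)$ to each reduce to $I$ once the contributions $I, G_3, G_2, \dots, G_2$ from the intervening tetrahedra and slabs are cancelled against it.

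The genuinely delicate point is bookkeeping rather than any single algebraic identity: one must be scrupulous about (i) the orientation of $\gamma_e$ and hence whether each face pairing enters as $G$ or as $G^{-1}$, (ii) which representative ($\tetA$ versus $\tetB$, and which slab $\slab_{k}$) each neighbouring cell is developed to, since the dictionary in \S\ref{subsec:general_case} makes the realising transformation depend on whether the cell below is of type $\L$ or $\R$, and (iii) the two distinct conventions $\sigmah_j^S(14),\sigmah_j^S(24)$ versus $\sigmah_j^S(41),\sigmah_j^S(42)$ for the edges of a slab, which control whether a slab face-pairing around a given edge is an $I$, a $G_4^{\pm1}$, or part of a $G_2^{n}G_3$. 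The argument for type $\L$ is effectively the $\sigma_j^\L$ block of the figure-eight computation with the $\R$-tetrahedron replaced by a tetrahedron--slab pair joined by an identity, so no new cancellation is needed there; the argument for type $\R$ is where Lemma~\ref{lem:geometric_realisation_between_bigons} does the real work. Once triviality of the geometric holonomy is established for one representative of each of the three types, Theorem~\ref{thm:edge_bijection_with_cells} promotes it to all edges of $\cell_f$, and then Lemma~\ref{lem:geometric_realisation_CR_structure} (together with the monotonicity of all the face pairings, which is Lemma~\ref{lem:face_pairings_are_monotone} for $G_1,G_2,G_3,G_4$ and is immediate for $I$ and for the vertical--$\CC$--circle--preserving products $G_2^{n}G_3$) upgrades the finite geometric realisation of $\cell_f$ to a branched CR structure on $M_f$, which is the assertion of the theorem.
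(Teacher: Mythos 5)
Your high-level strategy agrees with the paper's: reduce via Theorem~\ref{thm:edge_bijection_with_cells} to one edge of each of the three types, write the ordered product of the realised face pairings around it, use Lemma~\ref{lem:geometric_realisation_between_bigons} to dispose of the $\pih(\sigmah_j^{\R}(14))$ edges for free, and close with Lemma~\ref{lem:geometric_realisation_CR_structure}. However, the argument you give for why the remaining two products equal the identity has a genuine gap.

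You assert that, after organising the contributions, the holonomy word ``telescopes'' because ``each generator that is introduced when the loop enters a region is cancelled by its inverse when the loop exits on the other side,'' and you offer the figure-eight identity $(G_2G_3)^{-1}G_4^{-1}G_3G_1G_4 = I$ as an instance of this phenomenon. That identity does \emph{not} arise from formal cancellation: expanded, it reads $G_3^{-1}G_2^{-1}G_4^{-1}G_3G_1G_4$, in which no two adjacent letters are inverses of one another. More importantly, in the general case the holonomy words around the edges $\pih(\sigmah_j^{S}(41))$ and $\pih(\sigmah_j^{\L}(14))$ take the respective forms
\[
G_3\,G_2^{-n_j}\,G_4^{-1}\,G_3\,G_1^{n_j}\,G_4
\qquad\text{and}\qquad
G_1^{-1}\,G_4^{-n_j}\,G_2^{-1}\,G_1\,G_3^{n_j}\,G_2,
\]
with $n_j\geq 0$ unbounded. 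Nothing telescopes here, and as stated your argument leaves an infinite family of non-trivial relations in $\PU(2,1)$ to be established. The step you are missing is the crucial observation, used in the paper's proof, that $G_1$, $G_2$, $G_3$, $G_4$ all have order six in $\PU(2,1)$. This collapses each infinite family to the six cases $n_j\in\{0,\ldots,5\}$, which are then verified by direct matrix computation. Without that observation (or some other finitary reduction or algebraic identity valid for all $n_j$), the proposal does not actually prove triviality of the holonomy, even though the surrounding bookkeeping and the reduction to three edge types are correct.
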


\begin{proof}
	We recall that by Theorem~\ref{thm:edge_bijection_with_cells} there is a canonical representative for each edge in $\cell_f$.
	
	Let $A \subset \{0,\dots,m-1\}$ be the subset of indexes such that $\sigmah_j^S$ is a slab of $\cell_f$, and let $\overline{A} = \{0,\dots,m-1\} \setminus A$ be its complement. It is a consequence of Lemma~\ref{lem:geometric_realisation_between_bigons} that the geometric holonomy around the edges $\pih \left(\sigmah_{j}^{\R}(14)\right)$, for $j\in A$, is trivial.
	
	Consider an edge $\pih \left(\sigmah_{j}^{S}(41)\right)$, for $j\in A$. Let $A_j\dots,A_{j+n_j+3}$ be the sequence of geometric realisations of all the face pairings around $\pih \left(\sigmah_{j}^{S}(41)\right)$, starting from $\sigmah_j^{S}$ and travelling clockwise from the point of view of the vertex $\sigmah_{j}^S(4)$ (cf.~Figure~\ref{fig:edge_cell_decomp_R} on the left). Then we have
	\begin{alignat*}{2}
	&A_j = G_4, \qquad &&A_{j+k} = G_1 \quad \text{for} \quad k \in \{1,\dots,n_j\},\\ 
	&A_{j+n_j+1} = G_3, \qquad &&A_{j+n_j+2} = G_4^{-1} \  \qquad A_{j+n_j+3} = G_3G_2^{-n_j}. 
	\end{alignat*}
	Thus the geometric holonomy around $\pih \left(\sigmah_{j}^{S}(41)\right)$ is the product $G_3G_2^{-n_j}G_4^{-1}G_3G_1^{n_j}G_4$. Because the matrices $G_1$ and $G_2$ are of order six, one only needs to check that the product is the identity matrix for $n_j \in \{0,\dots,5\}$. Straight forward computation of the six products gives the result.\\
	
	An analogous argument works for the edges $\pih \left(\sigmah_{j}^{\L}(14)\right)$, $j\in \overline{A}$. The geometric holonomy around them is of the form $G_1^{-1}G_4^{-n_j}G_2^{-1}G_1G_3^{n_j}G_2$. The matrices $G_3$ and $G_4$ are also of order six, hence one only needs to check that the cases $n_j \in \{0,\dots,5\}$. The calculation is straightforward.
	
	We apply Lemma~\ref{lem:geometric_realisation_CR_structure} to complete the proof.
\end{proof}


\section{Properties of the Structures}
\label{sec:property_structures}

Consider the branched CR structure on the hyperbolic once-punctured torus bundle $M_f$ described in the previous section~\ref{subsec:general_case}. We conclude by analysing two important features of the structure: the ramification order around each connected component of the branch locus (namely the ideal edges), and the holonomy representation. In~\S\ref{subsec:branch_locus} we show that the ramification order of an edge $e$ has a simple description in terms of the valence of $e$ in the cell decomposition, and therefore its explicitly computable (cf.~Theorem~\ref{thm:edge_bijection_with_cells}). In~\S\ref{subsec:holonomy} we find the holonomy of the generators of $\pi_1(M_f)$ and underline some properties.


\subsection{Branch locus}
\label{subsec:branch_locus}

The branch locus of the CR structure of $M_f$ is set of all ideal edges of the associated cell decompositions $\cell_f$. Here we show that the ramification order around each curve is related to their valence in the simplicial complex. The strategy will be to develop each curve as a vertical line in Heisenberg space, and analyse the projection onto the $\CC$--plane of a neighbourhood. This way we can talk about angles of the projections where otherwise it would not be possible. We remind the reader that CR transformations do not preserve angles, therefore the angles we are going to talk about depend on the chosen realisations.\\

We recall that by Theorem~\ref{thm:edge_bijection_with_cells} there is a canonical representative for each edge in the cell decomposition $\cell_f$, namely $\sigmah_j^{\R}(14),\sigmah_j^S(41)$ and $\sigmah_j^{\L}(14)$. Let $\text{ceiling}(x) = \lceil x \rceil$ be the \emph{ceiling function}, which associates $x$ to the smallest integer greater than or equal to $x$.

\begin{lem}
	\label{lem:branching_around_edge_R}
	Let $n_j+4$ be the valence of $e_j = \pih \left( \sigmah_j^{\R}(14) \right)$ in $\cell_f$. Then the ramification order around $e_j$ is $\left \lceil{ \frac{n_j+5}{6} }\right \rceil$.
\end{lem}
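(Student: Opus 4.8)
The plan is to develop $e_j$ as a vertical $\CC$--circle in $\overline{\H}$, project a neighbourhood of it onto the $\CC$--plane, and recover the ramification order $N$ around $e_j$ as the total angle subtended there by the developed star of $e_j$, divided by $2\pi$. This is meaningful because, by Theorem~\ref{thm:geometric_realisation_is_structure}, the geometric realisation of $\cell_f$ is a branched CR structure, so a well-defined ramification order exists around $e_j$, and by construction of the standard CR branching map $\xi(z,t)=(z^N,t)$ the total angle about a branch curve of order $N$ is $2\pi N$.

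First I would read off the combinatorics of the star of $e_j$ from Lemma~\ref{lem:edge_bijection_R}: the valence is $n_j+4$, and a small loop around $e_j$ meets, in cyclic order, the cells $\sigmah_j^S$, $\sigmah_j^\R$, $\sigmah_{j+1}^\L,\dots,\sigmah_{j+n_j}^\L$, $\sigmah_{j+n_j+1}^\R$, $\sigmah_{j+n_j+1}^S$, along the respective edges $(14),(14),(24),\dots,(24)$. As recorded in the proof of Theorem~\ref{thm:geometric_realisation_is_structure}, the consecutive face pairings are realised by $I$, then $G_3$, then $G_2$ a total of $n_j$ times, then $I$, and finally the bigon pairing $(G_2^{\,n_j}G_3)^{-1}$ closing the loop, and $\sigmah_{j+n_j+1}^S$ is realised by the slab $\slab_{n_j}$. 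Then, after normalising $(\dev,\hol)$ so that $\dev$ carries the chosen lift of $e_j$ to $[P_1,P_4]$ and restricts to $\phi_j^S$ on $\sigmah_j^S$, I would develop the remaining cells of the star by composing the above face pairings. The argument relies on the fact that every pairing occurring here is a word in $I,G_2^{\pm1},G_3^{\pm1}$, and that by \S\ref{subsec:CR_tetrahedra_slabs} both $G_2$ and $G_3$ preserve the foliation of $\overline{\H}$ by vertical $\CC$--circles while acting on the $\CC$--plane as Euclidean rotations through $\tfrac{\pi}{3}$; consequently the developed star projects onto an $N$--fold neighbourhood of the projection of $P_1$, so that angles in this projection are meaningful.

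The heart of the proof is then an angle count. Using the explicit parametrisations in \S\ref{subsec:CR_tetrahedra_slabs} one computes that a developed $\CC$--tetrahedron $\tetA$ or $\tetB$ subtends angle $\tfrac{2\pi}{3}$ at the vertex reached through its edge $(14)$ and angle $\tfrac{\pi}{3}$ at the vertex reached through its edge $(24)$; that a slab subtends $\tfrac{2\pi}{3}$ through its edge $(14)$; and that the slab $\slab_k$ subtends, through its edge $(24)$, an angle depending only on $k\bmod 6$, which works out to $\tfrac{\pi}{3}\bigl((1-k)\bmod 6\bigr)$ — the restriction to $k\bmod 6$ reflecting the $6$--periodicity of the slabs and the fact that $G_2$ cyclically permutes their bigonal faces. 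Adding the contributions of the $n_j+4$ cells of the star, namely $\tfrac{2\pi}{3}$ from $\sigmah_j^S$, $\tfrac{2\pi}{3}$ from $\sigmah_j^\R$, $\tfrac{\pi}{3}$ from each of the $n_j+1$ tetrahedra met through a $(24)$--edge, and $\tfrac{\pi}{3}\bigl((1-n_j)\bmod 6\bigr)$ from $\slab_{n_j}=\sigmah_{j+n_j+1}^S$, gives a total angle of
$$
\frac{(n_j+5)\pi}{3}+\frac{\pi}{3}\bigl((1-n_j)\bmod 6\bigr)=2\pi\left\lceil\frac{n_j+5}{6}\right\rceil ,
$$
the last equality being a routine check on $n_j$ modulo $6$ (it amounts to $(n_j+5)+\bigl((1-n_j)\bmod 6\bigr)=6\lceil(n_j+5)/6\rceil$). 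Dividing by $2\pi$ yields $N=\lceil(n_j+5)/6\rceil$.

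The step I expect to be the main obstacle is this angle count, and within it the angular width of a slab met through its $(24)$--edge: unlike a $\CC$--tetrahedron, a slab carries a bigonal face, so as the loop about $e_j$ drags the developed copies of that bigon past the intervening $\CC$--tetrahedra one must track carefully how the rotation $G_2$ cycles through the six distinct slabs $\slab_0,\dots,\slab_5$ — this is precisely the mechanism by which the number six, and hence the ceiling function, enters. A secondary point is to confirm that the developed star closes up, so that the total angle is indeed an integer multiple of $2\pi$; this is guaranteed by the triviality of the geometric holonomy around $e_j$ proved in Theorem~\ref{thm:geometric_realisation_is_structure}.
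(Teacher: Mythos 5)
Your approach is the same as the paper's: normalise so that the edge develops onto a vertical $\CC$--circle, project the star of the edge to the $\CC$--plane, and add up the angular contributions of the cells, using the facts (verifiable from the parametrisations of \S\ref{subsec:CR_tetrahedra_slabs}) that a developed symmetric tetrahedron subtends $\tfrac{2\pi}{3}$ at the projection of $P_1$ and $\tfrac{\pi}{3}$ at the projection of $P_2$, and that a slab subtends $\tfrac{2\pi}{3}$ through its edge $(14)$. Where you differ is in the treatment of the last slab: the paper does \emph{not} compute that slab's angular width. It observes that the $n_j+3$ cells counted so far contribute exactly $\bigl(\tfrac{n_j+5}{6}\bigr)2\pi$, that the remaining slab contributes a quantity in $[0,2\pi)$ because it embeds, and that the grand total must be an integer multiple of $2\pi$ because the developed star closes up (triviality of the geometric holonomy, Theorem~\ref{thm:geometric_realisation_is_structure}); the ceiling formula then follows without ever identifying which $\slab_k$ appears last or how wide it is. You instead identify the last slab as $\slab_{n_j}$ and derive the explicit width $\tfrac{\pi}{3}\bigl((1-n_j)\bmod 6\bigr)$, and then verify the resulting arithmetic identity. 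That works, and it yields extra information (the exact angular width of each slab), but it is strictly more work than the paper needs, and the slab-width computation is the most delicate step in your version: the two faces of $\slab_k$ meeting along $[P_2,P_4]$ are a vertical half-plane and a non-flat ruled surface, so reading off the dihedral angle from the parametrisations is not entirely routine (you yourself flag this as the main obstacle). Since you also invoke the closure argument at the end, you effectively have both proofs; if pressed, the cleaner route is to drop the slab-width calculation entirely and let the closure argument pin down the ceiling, exactly as the paper does — the explicit value $\tfrac{\pi}{3}\bigl((1-k)\bmod 6\bigr)$ then falls out as a corollary rather than a prerequisite.
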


\begin{proof}
	First, we observe that the geometric realisation $\phi_j^{\R}$ develops the edge $\sigmah_j^{\R}(14)$ into the vertical ray of Heisenberg space going from $P_1 = (1,\sqrt{3})$ to $P_4 = \infty$. Therefore we can understand the ramification order of $e_j$ by looking at the projections of the tetrahedra around $e_j$ on the $\CC$--plane of $\overline{\H}$. 
	
	Let $R \subset \CC$ be the projection of the standard symmetric tetrahedron. It is a triangular region bounded by three arcs of circles (cf.~Figure~\ref{fig:CR_tetrahedra}). We recall from~\S\ref{subsec:general_case} (cf. Figure~\ref{fig:edge_cell_decomp_R}) that the sequence of $3$--cells around $e_j$ in $\cell_f$ is
	$$
	\sigmah_j^S,\sigmah_j^{\R},\sigmah_{j+1}^{\L},\dots,\sigmah_{j+n_j}^{\L},\sigmah_{j+n_j+1}^{\R}, \sigmah_{j+n_j+1}^S.
	$$
	Then $\phi_j^{\R} \left(\sigmah_j^{\R}\right)$ projects onto $R_j := R$. The next simplex glues to $\phi_j^{\R} \left(\sigmah_j^{\R}\right)$ via $G_3^{-1}$, therefore its projection $R_{j+1}$ is a $\frac{\pi}{3}$ clockwise rotation of $R_j$ around the origin. After that, we have $n_j$ simplices each of which is glued to the previous one by $G_2^{-1}$. Whence each of their projections $R_{j+k}$, for $k\in \{1,\dots,n_j+1\}$, is a $\frac{\pi}{3}$ anticlockwise rotation of $R_{k-1}$ about the point $1$. Finally, the projections of the geometric realisations of the two slabs $\sigmah_j^S, \sigmah_{j+n_j+1}^S$ rigidly glue to $R_j$ and $R_{j+n_j+1}$ to fill in the gap. Examples for $n_j = 1$ and $n_j = 3$ are depicted in Figure~\ref{fig:branch_locus_R}.
	
	\begin{figure}[t!]
		\centering
		\includegraphics[height=4.3cm]{./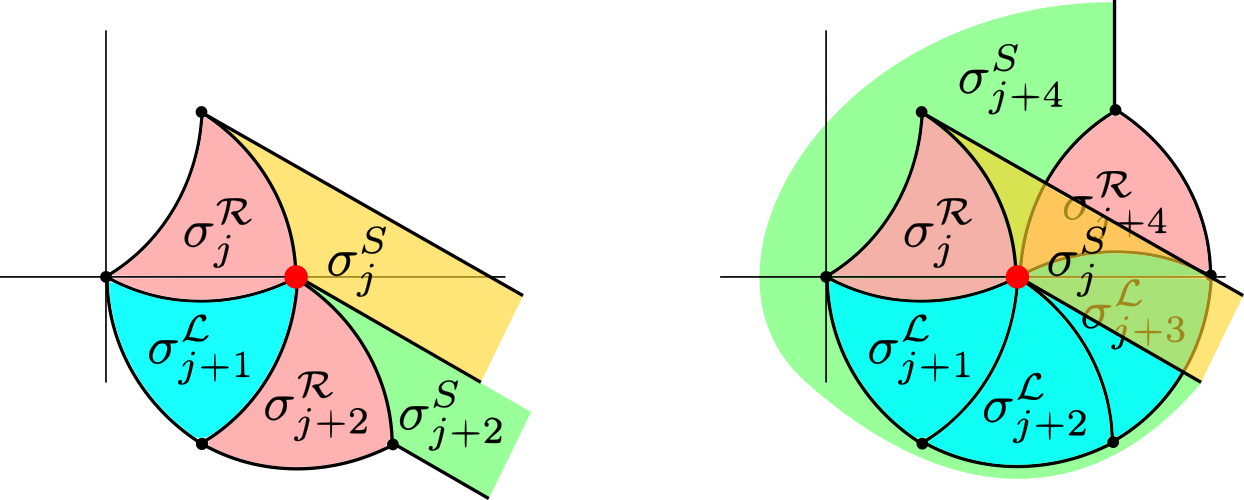}
		\captionof{figure}{The developments around the branch locus for $n_j = 1$ (on the left) and $n_j = 3$ (on the right). Their respective ramification orders are one and two.}
		\label{fig:branch_locus_R}
	\end{figure}
	
	Around $e_j$, the first region $R_j$ contributes with an angle of $\frac{2}{3}\pi$, while every other region $R_{j+k}$, for $k\in \{1,\dots,n_j+1\}$, contributes with an angle of $\frac{\pi}{3}$. The first slab also adds $\frac{2}{3}\pi$. This sums up to $\left(\frac{5+n_j}{6}\right) 2\pi$. The angle of the projection of the last slab around $e_j$ is a non-negative number strictly lower than $2 \pi$, therefore the total angle is the next integer multiple of $2\pi$. That is $\left \lceil{ \frac{n_j+5}{6} }\right \rceil 2\pi$.
\end{proof}

\begin{lem}
	\label{lem:branching_around_edge_bigon}
	Let $n_j+4$ be the valence of $e_j = \pih \left( \sigmah_j^S(41) \right)$ in $\cell_f$. Then the ramification order around $e_j$ is $\left \lceil{ \frac{n_j+5}{6} }\right \rceil$.
\end{lem}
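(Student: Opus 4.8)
The plan is to run, for the bigon-type edge $e_j = \pih\left(\sigmah_j^S(41)\right)$, the same argument used in the proof of Lemma~\ref{lem:branching_around_edge_R}. First I would observe that the chosen representative $\sigmah_j^S(41)$ lies on the bigonal face $B'$ of the geometric slab $\slab_{k_j} = \phi_j^S\left(\sigmah_j^S\right)$, so that $\phi_j^S$ develops it onto a segment of the vertical $\CC$--circle through $P_1 = (1,\sqrt 3)$ and $P_4 = \infty$; this segment projects to the single point $1 \in \CC$. Consequently the ramification order around $e_j$ is $\frac{1}{2\pi}$ times the total angle at $1$ subtended by the union of the $\CC$--projections of the $3$--cells meeting $e_j$, developed in turn along the face pairings around $e_j$.

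Next I would read off the combinatorics of the fan around $e_j$ from results already established. By Lemma~\ref{lem:edge_bijection_R}, the $3$--cells meeting $e_j$, in cyclic order, are: the slab $\sigmah_j^S$; then the $n_j$ tetrahedra $\sigmah_{j+1}^{\L},\dots,\sigmah_{j+n_j}^{\L}$ of type $\L$; then the tetrahedron $\sigmah_{j+n_j+1}^{\R}$; then the tetrahedron $\sigmah_{j+n_j+2}^{\star}$; and finally the closing slab $\sigmah_{j+n_j+1}^S$, which is realised by $\slab_{n_j}$ since $\pih\left(\sigmah_{j+n_j+1}^S(24)\right)$ has valence $n_j+4$. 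The geometric realisations of the successive face pairings around $e_j$ were computed in the proof of Theorem~\ref{thm:geometric_realisation_is_structure}: they are $G_4$, then $G_1$ (repeated $n_j$ times), then $G_3$, then $G_4^{-1}$, and finally $G_3 G_2^{-n_j}$, whose product is the identity (this is the triviality of the geometric holonomy around $e_j$).

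Then I would tally the angles. As in Lemma~\ref{lem:branching_around_edge_R}, developing the cells one after another along these face pairings lays their $\CC$--projections around the point $1$: the slab $\sigmah_j^S$ subtends $\tfrac23\pi$ at $1$ (its projection there is the wedge between the ray $B'$, of direction $e^{-i\pi/6}$, and the face $[P_4^+,P_1,P_2]$, whose projection is the unit--circle arc from $P_1$ to $P_2$ with tangent $i$ at $P_1$); exactly one of the $n_j+2$ tetrahedra has $\CC$--projection the full curvilinear triangle $R$ with vertices $0,1,e^{i\pi/3}$, subtending $\tfrac23\pi$ at $1$; and each of the remaining $n_j+1$ tetrahedra is a $\tfrac{\pi}{3}$--rotation of a neighbour about $1$, subtending $\tfrac{\pi}{3}$ at $1$. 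Summing over every cell except the closing slab gives
\[
\tfrac23\pi + \tfrac23\pi + (n_j+1)\,\tfrac{\pi}{3} = \left(\tfrac{n_j+5}{6}\right)2\pi .
\]
The closing slab $\sigmah_{j+n_j+1}^S$ subtends the remaining angle, a number in $[0,2\pi)$. Since the geometric holonomy around $e_j$ is trivial, the total angle at $1$ is an integer multiple of $2\pi$, hence equals the unique multiple of $2\pi$ in $\big[\left(\tfrac{n_j+5}{6}\right)2\pi,\left(\tfrac{n_j+5}{6}+1\right)2\pi\big)$, namely $\big\lceil \tfrac{n_j+5}{6}\big\rceil 2\pi$; this is the claimed ramification order. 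As a check, for the figure eight knot complement the red edge $e_\R = \pih\left(\sigmah_0^S(41)\right)$ has $n_j=1$, hence valence $5$ and ramification order $\lceil 6/6\rceil = 1$, in agreement with~\S\ref{subsec:figure_eight_complement}.

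The main obstacle is the angle bookkeeping in the third step. In Lemma~\ref{lem:branching_around_edge_R} every face pairing appearing in the fan is a rotation of the $z$--plane, which makes the picture transparent (cf.~Figure~\ref{fig:branch_locus_R}); here two of the pairings, $G_1$ and $G_4$, move the point at infinity and are not $z$--plane rotations, so one must verify by hand---using the explicit matrices and parametrisations as in the proof of Lemma~\ref{lem:face_pairings_are_monotone}---both that precisely one tetrahedron of the fan carries the wide angle $\tfrac23\pi$ and that the gluings fan the remaining tetrahedra out in $\tfrac{\pi}{3}$ increments about $1$. Since $G_1$ and $G_4$ have order six, only the residues $n_j \bmod 6$ need explicit checking, exactly as in the proof of Theorem~\ref{thm:geometric_realisation_is_structure}; once these are dispatched, the trivial-holonomy argument concludes as above.
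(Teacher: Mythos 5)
Your proof takes a genuinely different route from the paper, and the route does not work. You transplant the exact-angle bookkeeping of Lemma~\ref{lem:branching_around_edge_R} wholesale: one cell subtending $\tfrac{2}{3}\pi$ among the tetrahedra, $\tfrac{\pi}{3}$ for each of the rest, then close with the last slab. But around $e_j=\pih\left(\sigmah_j^S(41)\right)$ none of the tetrahedra carries the ``wide'' edge~$(14)$: by Lemma~\ref{lem:edge_bijection_R} the representatives glued to $e_j$ are $(13)$, $(42)$ and $(23)$, and the face pairings that fan the cells out ($G_4$, $G_1$) do \emph{not} fix $\infty$, so there is no reason a tetrahedron's $\CC$--projection should be the curvilinear triangle $R$ with a $\tfrac{2}{3}\pi$ corner at $1$. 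Indeed, one can check directly that $G_4^{-1}(\tetB)$ subtends only $\tfrac{\pi}{3}$ at $1$: $G_4^{-1}$ sends the vertical $\CC$--circle over $1$ to the $\CC$--circle projecting to the circle through $0$ and $1$ centred at $-\bar\omega$, whose tangent at $1$ is $e^{5i\pi/6}$, giving an opening of $\tfrac{5\pi}{6}-\tfrac{\pi}{2}=\tfrac{\pi}{3}$ against the shared face $[P_4^+,P_1,P_2]$. Your tally also fails a sanity check you cite yourself: for the figure eight knot ($n_j=1$) you get a partial sum of exactly $2\pi$ before adding the last slab, which would force the last slab to subtend~$0$, contradicting both that it is an embedded $3$--cell and the paper's explicit statement that this edge has ramification order~$1$. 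You flag the angle bookkeeping as ``the main obstacle'' and say it ``must be verified by hand,'' but the verification would refute the claim rather than confirm it.

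The paper's own proof sidesteps this precisely because the exact-angle picture breaks down. It develops only the \emph{vertices} of the cells around $e_j$ (Table~\ref{table:vertices_around_edge_bigon}), observes they lie on the regular hexagon $1+\zeta_6^k$ marching one step per cell, and uses the crude bound that each embedded $3$--cell subtends an angle strictly in $(0,2\pi)$ to trap the partial sum strictly between consecutive multiples of $2\pi$; triviality of the geometric holonomy and the last-slab bound then pin down the answer. This is a bound argument, not a $\tfrac{\pi}{3}$--increment computation, and the paper explicitly announces the change of method (``we are not going to consider the projections of the entire cells\dots but only the projections of the vertices''). To repair your proposal you would have to abandon the claimed $\tfrac{2}{3}\pi/\tfrac{\pi}{3}$ split and instead establish the paper's two-sided estimate, which is the content you replaced by an unsupported analogy.
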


\begin{proof}
	This proof is similar to the one of Lemma~\ref{lem:branching_around_edge_R}, as the geometric realisation $\phi_j^{S}$ develops the edge $\sigmah_j^S(41)$ into the vertical ray from $P_4 = \infty$ to $P_1 = (1,\sqrt{3})$. The only difference is that we are not going to consider the projections of the entire cells, since they are not as tidy as in the previous case, but only the projections of the vertices. Every $3$--cell around $e_j$ has two vertices at $P_4$ and $P_1$, and its angle about $e_j$ is strictly between zero and $2\pi$. Therefore knowing the positions of the other vertices gives us an estimate of the total angle around $e_j$.
	
	The sequence of $3$--cells around $e_j$ in $\cell_f$ is
	$$
	\sigmah_j^S,\sigmah_{j+1}^{\L},\dots,\sigmah_{j+n_j}^{\L},\sigmah_{j+n_j+1}^{\R},\sigmah_{j+n_j+2}^{\star}, \sigmah_{j+n_j+1}^S, 
	$$
	for some $\star \in \{ \L, \R \}$. We begin by developing $\phi_j^{S} \left(\sigmah_j^S\right)$, then glue every other $3$--cell around $e_j$. The vertices that are not identified with the endpoints of $e_j$ are listed in Table~\ref{table:vertices_around_edge_bigon}. They are all positioned at the vertices of a regular hexagon of edge length $\frac{\sqrt{3}}{2}$. 
	
	\begin{table}[t!]
		{\centering
			\begin{tabular}{|| >{\centering\arraybackslash} m{2.5cm} | >{\centering\arraybackslash} m{3cm} | >{\centering\arraybackslash} m{4cm} | >{\centering\arraybackslash} m{3cm} ||} 
				\hline
				CR face pairing & $3$--cells & Vertices disjoint from $e_j$ & $\CC$--coordinates \\
				\hline\hline
				\multirow{2}{2.7cm}{$G_4$} & \vspace{0.1cm} $\sigmah_j^S$ &  \vspace{0.1cm} $\sigmah_j^S(2)$ & \vspace{0.1cm} $-\omega$ \\  \cline{2-4}
				& \multirow{2}{1.5cm}{ $\sigmah_{j+1}^{\L}$} & \vspace{0.1cm} $\sigmah_{j+1}^{\L}(2)$ & \vspace{0.1cm} $-\omega$ \\ \cline{1-1} \cline{3-4}
				\multirow{2}{2.7cm}{$G_1$} &  & \vspace{0.1cm} $\sigmah_{j+1}^{\L}(4)$ & \vspace{0.1cm} $0$ \\ \cline{2-4}
				& \vspace{0.1cm} $\sigmah_{j+1+k}^{\L}$, &  \vspace{0.1cm} $\sigmah_{j+1+k}^{\L}(2)$ & \vspace{0.1cm} $(-1)^k \omega^{k-1} +1$ \\ \cline{1-1} \cline{3-4}
				\multirow{2}{2.7cm}{$G_1$} &  $k=1,\dots,n_j-1$ &  \vspace{0.1cm} $\sigmah_{j+1+k}^{\L}(4)$ & \vspace{0.1cm} $(-1)^{k+1} \omega^{k} +1$ \\ \cline{2-4}
				& \multirow{2}{1.5cm}{$\sigmah_{j+n_j+1}^{\R}$} &  \vspace{0.1cm} $\sigmah_{j+n_j+1}^{\R}(2)$ & \vspace{0.1cm} $(-1)^n_j \omega^{n_j-1} +1$ \\  \cline{1-1} \cline{3-4}
				\multirow{2}{2.7cm}{$G_3$} & & \vspace{0.1cm} $\sigmah_{j+n_j+1}^{\R}(4)$ & \vspace{0.1cm} $(-1)^{n_j+1} \omega^{n_j} +1$ \\ \cline{2-4}
				& \multirow{2}{1.5cm}{$\sigmah_{j+n_j+2}^{\star}$} &  \vspace{0.1cm} $\sigmah_{j+n_j+2}^{\star}(4)$ & \vspace{0.1cm} $(-1)^{n_j+1} \omega^{n_j} +1$ \\  \cline{1-1} \cline{3-4}
				\multirow{2}{2.7cm}{$G_4^{-1}$} & &  \vspace{0.1cm} $\sigmah_{j+n_j+2}^{\star}(1)$ & \vspace{0.1cm} $(-1)^{n_j+2} \omega^{n_j+1} +1$ \\ \cline{2-4}
				& \vspace{0.1cm} $\sigmah_{j+n_j+1}^S$ &  \vspace{0.1cm} $\sigmah_{j+n_j+1}^S(1)$ & \vspace{0.1cm} $(-1)^{n_j+2} \omega^{n_j+1} +1$ \\  \cline{1-1} \cline{3-4}
				\hline
				\hline
		\end{tabular}}
		\caption{The list of vertices of the $3$--cells around $e_j$ that are not identified with the endpoints of $e_j$. We recall that $\omega= -\frac{1}{2}\left( 1 + i \sqrt{3}\right)$.}
		\label{table:vertices_around_edge_bigon}
	\end{table}
	
	We draw examples of the projections for $n_j = 1$ and $n_j = 3$ in Figure~\ref{fig:branch_locus_L}. We remark that these are projections of the vertices and edges, but not of the $2$--skeletons as faces are generally not foliated by vertical rays anymore.
	
	\begin{figure}[t!]
		\centering
		\includegraphics[height=4.3cm]{./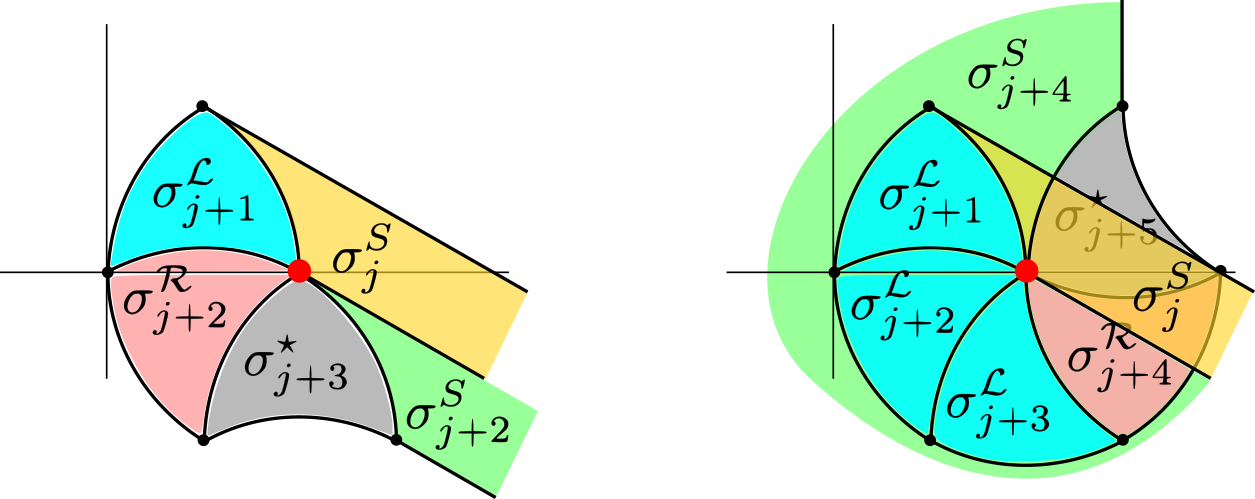}
		\captionof{figure}{The developments around the branch locus for $n_j = 1$ (on the left) and $n_j = 3$ (on the right). Only vertices and edges are projected. The shaded areas are just guidelines to distinguish the different cells, but they are not the actual projections of the $3$--cells. The respective ramification orders are one and two.}
		\label{fig:branch_locus_L}
	\end{figure}
	
	Up to $\sigmah_{j+n_j+2}^{\star}$, the total sum of the angles is strictly between $\left \lceil{ \frac{n_j-1}{6} }\right \rceil$ and $\left \lceil{ \frac{n_j+5}{6} }\right \rceil.$ Because the angle of the projection of the last slab around $e_j$ is a non-negative number strictly lower than $2 \pi$, the ramification order must be $\left \lceil{ \frac{n_j+5}{6} }\right \rceil$.
\end{proof}

\begin{lem}
	\label{lem:branching_around_edge_L}
	Let $3n_j+4$ be the valence of $e_j = \pih \left( \sigmah_j^{\L}(14) \right)$ in $\cell_f$. Then the ramification order around $e_j$ is $n_j+1$.
\end{lem}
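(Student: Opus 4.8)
The plan is to run the same argument as in Lemma~\ref{lem:branching_around_edge_R} and Lemma~\ref{lem:branching_around_edge_bigon}: develop the canonical representative $\sigmah_j^{\L}(14)$ of $e_j$ as a vertical ray in $\Hb$, and then read the ramification order off the total angle that the $\CC$--projections of the $3$--cells around $e_j$ subtend at that ray. First I would note that $\phi_j^{\L}$ carries $\sigmah_j^{\L}(14)$ to the vertical ray from $P_1=(1,\sqrt 3)$ to $P_4=\infty$, independently of whether $\phi_j^{\L}$ realises $\sigmah_j^{\L}$ as $\tetA$ or $\tetB$, since $\tetA$ and $\tetB$ agree on a neighbourhood of the edge $[P_1,P_4]$.

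Then I would record the cyclic order of the cells meeting $e_j$. By Lemma~\ref{lem:edge_ribbon}, in $\tri_f$ the edge $\sigma_j^{\L}(14)$ sits at the bottom of the ribbon $\sigma_j^{\L},\sigma_{j+1}^{\R},\dots,\sigma_{j+n_j}^{\R},\sigma_{j+n_j+1}^{\L},\sigma_{j+n_j+2}^{\star}$, and by Lemma~\ref{lem:edge_bijection_L} passing to $\cell_f$ inserts a slab $\sigmah_{j+k}^S$ along each $\sigma_{j+k}^{\R}(12)$; hence the cyclic list of cells around $e_j$ in $\cell_f$ is
$$
\sigmah_j^{\L},\ \sigmah_{j+1}^{\R},\ \sigmah_{j+1}^S,\ \dots,\ \sigmah_{j+n_j}^{\R},\ \sigmah_{j+n_j}^S,\ \sigmah_{j+n_j+1}^{\L},\ \sigmah_{j+n_j+2}^{\star},\ \sigmah_{j+n_j+1}^{\L},\ \sigmah_{j+n_j}^{\R},\ \dots,\ \sigmah_{j+1}^{\R},
$$
closing back up to $\sigmah_j^{\L}$; this gives the expected total of $3n_j+4$ cells, with each $\sigmah_{j+k}^{\R}$ appearing once on the ascending side of the ribbon (with edge $(12)$) and once on the descending side (with edge $(34)$). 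I would then develop these cells in order using the face pairings fixed in~\S\ref{subsec:general_case}, namely among $G_1,G_2,G_3,G_4$ and the identity. As in the two earlier lemmas, the successive developed pictures are congruent copies of the $\CC$--projections of $\tetA,\tetB$ and $\slab_k$, obtained from one another by $\frac{\pi}{3}$--rotations of the $\CC$--plane (about $z=1$ or about the origin), so they pinwheel around $z=1$. Tallying the fixed angle each cell subtends there — $\frac{2}{3}\pi$ or $\frac{\pi}{3}$ for the tetrahedra, a fixed amount for the slabs — I would check that the $3n_j$ ribbon cells account for exactly $n_j$ full turns and the four boundary tetrahedra $\sigmah_j^{\L},\ \sigmah_{j+n_j+1}^{\L}$ (twice) and $\sigmah_{j+n_j+2}^{\star}$ account for one more full turn, so that the total angle around $e_j$ is exactly $(n_j+1)\cdot 2\pi$ and the ramification order is $n_j+1$.

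What makes this case cleaner than Lemmas~\ref{lem:branching_around_edge_R} and~\ref{lem:branching_around_edge_bigon} — and the reason the ramification order comes out as the sharp value $n_j+1$ rather than a ceiling of a fraction — is that every slab touching $e_j$ does so along its rigid edge $[P_1,P_2]$, never along one of its bigonal faces, and the six slab types $\slab_k$ all coincide near $[P_1,P_2]$; so no slab contributes an undetermined angle and there is no leftover gap to round up. The main obstacle I expect is purely computational bookkeeping: developing $3n_j+4$ cells around the edge in two passes along the ribbon, where each $\sigmah_{j+k}^{\R}$ reappears with a different edge and hence at a different vertex of its $\CC$--projection, and confirming that the descending pass exactly completes the turns begun on the ascending pass. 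As in the proof of Theorem~\ref{thm:geometric_realisation_is_structure}, the order--six relations among $G_1,G_2,G_3,G_4$ should reduce this to checking the residue classes of $n_j$ modulo $6$, after which the angle sum is routine.
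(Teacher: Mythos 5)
Your high-level plan matches the paper's: develop the canonical representative as the vertical ray $z=1$, and show the total angle swept by the $3n_j+4$ cells around it is exactly $(n_j+1)\cdot 2\pi$, with $3n_j$ ``ribbon'' cells contributing $n_j$ full turns and the four boundary cells contributing the last one. You also correctly observe that every slab touching $e_j$ does so along its rigid edge $[P_1,P_2]$, which is common to all six $\slab_k$, so no slab contributes an undetermined angle to round up --- that is indeed what makes this case give an exact count rather than a ceiling.

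There is, however, a genuine gap in the step where you claim the ``successive developed pictures are congruent copies of the $\CC$--projections of $\tetA,\tetB$ and $\slab_k$, obtained from one another by $\frac{\pi}{3}$--rotations of the $\CC$--plane (about $z=1$ or about the origin), so they pinwheel around $z=1$'' and conclude that each cell subtends a fixed $\frac{2}{3}\pi$ or $\frac{\pi}{3}$. That was true in Lemma~\ref{lem:branching_around_edge_R}, where the only face pairings crossed around the edge were $G_2$ and $G_3$, both of which preserve vertical $\CC$--circles and act on the $\CC$--plane by $\frac{\pi}{3}$--rotations. Around $\pih(\sigmah_j^{\L}(14))$ the face pairings one crosses include $G_1$ and $G_4$ (see the geometric holonomy $G_1^{-1}G_4^{-n_j}G_2^{-1}G_1G_3^{n_j}G_2$ computed in Theorem~\ref{thm:geometric_realisation_is_structure}). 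Neither $G_1$ nor $G_4$ fixes $P_4=\infty$, so $G_1^{-1}$, $G_4^{-1}$ do not preserve vertical $\CC$--circles and the developed projections are \emph{not} rotated copies of the original triangular region: concretely, $G_1^{-1}(\tetA)$ has $\CC$--projection with the same three vertices $\{1,-\omega,0\}$ but with different bounding arcs, hence a different angle at $z=1$. This is precisely why the paper abandons the projection--of--cells argument here, switching (via Lemma~\ref{lem:branching_around_edge_bigon}, which this proof follows ``almost verbatim'') to tracking only the $\CC$--coordinates of the developed vertices in Tables~\ref{table:vertices_around_edge_L_1} and~\ref{table:vertices_around_edge_L_2}, noting they fall on a regular hexagon centred at $z=1$, and arguing from that estimate --- rather than from a per-cell angle of $\frac{\pi}{3}$ or $\frac{2}{3}\pi$ --- that each $k$--triplet $\sigmah_{j+k}^{\R},\sigmah_{j+k}^S,\sigmah_{j+k}^{\R}$ closes up to exactly $2\pi$ and the base case $n_j=0$ gives exactly $2\pi$. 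To repair your argument you would need to either do that vertex-tracking calculation, or otherwise justify an angle bound for cells developed through $G_1^{-1}$ and $G_4^{-1}$; the constant-$\frac{\pi}{3}$ pinwheel picture does not hold here.
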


\begin{proof}
	We follow almost verbatim the proof of Lemma~\ref{lem:branching_around_edge_bigon}.
	
	First we consider the development $\phi_j^{\L} \left(\sigmah_j^{\L}\right)$. This geometric realisation maps the edge $\sigmah_j^{\L}(14)$ into the vertical ray from $P_1 = (1,\sqrt{3})$ to $P_4 = \infty$. From the point of view of the vertex $\sigmah_j^{\L}(4)$ (cf.~Figure~\ref{fig:edge_cell_decomp_L}), starting from $\sigmah_j^{\L}$ and travelling anticlockwise around $e_j$ until $\sigma_{j+n_j+2}^{\star}$, we encounter the $3$--cells
	$$
	\sigma_j^{\L} \ \sigma_{j+1}^{\R} \ \sigma_{j+1}^{S} \  \cdots  \ \sigma_{j+n_j}^{\R}  \ \sigma_{j+n_j}^S  \ \sigma_{j+n_j+1}^{\L} \ \sigma_{j+n_j+2}^{\star}.
	$$
	The vertices of these cells that are not identified with the endpoints of $e_j$ are listed in Table~\ref{table:vertices_around_edge_L_1}.
	
	\begin{table}[!t]
		{\centering
			\begin{tabular}{|| >{\centering\arraybackslash} m{2.5cm} | >{\centering\arraybackslash} m{3cm} | >{\centering\arraybackslash} m{4cm} | >{\centering\arraybackslash} m{3cm} ||} 
				\hline
				CR face pairing & $3$--cells & Vertices disjoint from $e_j$ & $\CC$--coordinates \\
				\hline\hline
				\multirow{3}{2.7cm}{$G_1$} & \vspace{0.1cm} \multirow{2}{1.5cm}{ $\sigmah_{j}^{\L}$}  &  \vspace{0.1cm} $\sigmah_j^{\L}(2)$ & \vspace{0.1cm} $-\omega$ \\  \cline{3-4}
				& &  \vspace{0.1cm} $\sigmah_j^{\L}(3)$ & \vspace{0.1cm} $0$ \\  \cline{2-4}
				& \vspace{0.1cm} $\sigmah_{j+k}^{\R}$ & \vspace{0.1cm} $\sigmah_{j+k}^{\R}(3)$ & \vspace{0.1cm} $(-1)^{k}\overline{\omega}^{k-1}+1$ \\ \cline{1-1} \cline{3-4}
				\multirow{2}{2.7cm}{$I$} & $k=1,\dots,n_j$ & \vspace{0.1cm} $\sigmah_{j+k}^{\R}(4)$ & \vspace{0.1cm} $(-1)^{k+1}\overline{\omega}^{k}+1$ \\ \cline{2-4}
				& \vspace{0.1cm} $\sigmah_{j+k}^{S}$, &  \vspace{0.1cm} \multirow{2}{4cm}{\centering $\sigmah_{j+k}^{S}(4)$} & \vspace{0.1cm} \multirow{2}{3cm}{\centering $(-1)^{k+1}\overline{\omega}^{k}+1$}  \\ \cline{1-1}
				\multirow{2}{2.7cm}{$G_4$} &  $k=1,\dots,n_j$ &  & \\ \cline{2-4}
				& \multirow{2}{1.5cm}{$\sigmah_{j+n_j+1}^{\L}$} &  \vspace{0.1cm} $\sigmah_{j+n_j+1}^{\L}(3)$ & \vspace{0.1cm} $(-1)^{n_j+1}\overline{\omega}^{n_j}+1$ \\  \cline{1-1} \cline{3-4}
				\multirow{3}{2.7cm}{$G_2$} & & \vspace{0.1cm} $\sigmah_{j+n_j+1}^{\L}(4)$ & \vspace{0.1cm} $(-1)^{n_j+2}\overline{\omega}^{n_j+1}+1$ \\ \cline{2-4}
				& \multirow{2}{1.5cm}{$\sigmah_{j+n_j+2}^{\star}$} &  \vspace{0.1cm} $\sigmah_{j+n_j+2}^{\star}(4)$ & \vspace{0.1cm} $(-1)^{n_j+2}\overline{\omega}^{n_j+1}+1$ \\  \cline{3-4}
				& &  \vspace{0.1cm} $\sigmah_{j+n_j+2}^{\star}(1)$ & \vspace{0.1cm} $(-1)^{n_j+3}\overline{\omega}^{n_j+2}+1$ \\ \cline{2-4}
				\hline
				\hline
		\end{tabular}}
		\caption{The list of vertices of some of the $3$--cells around $e_j$ that are not identified with the endpoints of $e_j$. We recall that $\omega= -\frac{1}{2}\left( 1 + i \sqrt{3}\right)$.}
		\label{table:vertices_around_edge_L_1}
	\end{table}
	
	Similarly, if we travel clockwise around $e_j$, we have
	$$
	\sigma_j^{\L} \ \sigma_{j+1}^{\R} \cdots  \ \sigma_{j+n_j}^{\R} \ \sigma_{j+n_j+1}^{\L} \ \sigma_{j+n_j+2}^{\star}.
	$$
	The vertices of these cells that are not identified with the endpoints of $e_j$ are summarised in Table~\ref{table:vertices_around_edge_L_2}.
	
	\begin{table}[!t]
		{\centering
			\begin{tabular}{|| >{\centering\arraybackslash} m{2.5cm} | >{\centering\arraybackslash} m{3cm} | >{\centering\arraybackslash} m{4cm} | >{\centering\arraybackslash} m{3cm} ||} 
				\hline
				CR face pairing & $3$--cells & Vertices disjoint from $e_j$ & $\CC$--coordinates \\
				\hline\hline
				\multirow{3}{2.7cm}{$G_2$} & \vspace{0.1cm} \multirow{2}{1.5cm}{ $\sigmah_{j}^{\L}$}  &   \vspace{0.1cm} $\sigmah_j^{\L}(3)$ & \vspace{0.1cm} $0$ \\  \cline{3-4}
				& &\vspace{0.1cm} $\sigmah_j^{\L}(2)$ & \vspace{0.1cm} $-\omega$ \\  \cline{2-4}
				& \vspace{0.1cm} $\sigmah_{j+k}^{\R}$ & \vspace{0.1cm} $\sigmah_{j+k}^{\R}(2)$ & \vspace{0.1cm} $(-1)^{k+1} \overline{\omega}^{k}+1$ \\ \cline{1-1} \cline{3-4}
				\multirow{2}{2.7cm}{$G_3$} & $k=1,\dots,n_j$ & \vspace{0.1cm} $\sigmah_{j+k}^{\R}(1)$ & \vspace{0.1cm} $(-1)^{k+2} \overline{\omega}^{k+1}+1$ \\ \cline{2-4}
				& \multirow{2}{1.5cm}{$\sigmah_{j+n_j+1}^{\L}$} &  \vspace{0.1cm} $\sigmah_{j+n_j+1}^{\L}(2)$ & \vspace{0.1cm} $(-1)^{n_j+2} \overline{\omega}^{n_j+3}+1$ \\  \cline{1-1} \cline{3-4}
				\multirow{3}{2.7cm}{$G_1$} & & \vspace{0.1cm} $\sigmah_{j+n_j+1}^{\L}(1)$ & \vspace{0.1cm} $(-1)^{n_j+3} \overline{\omega}^{n_j+2}+1$ \\ \cline{2-4}
				& \multirow{2}{1.5cm}{$\sigmah_{j+n_j+2}^{\star}$} &  \vspace{0.1cm} $\sigmah_{j+n_j+2}^{\star}(1)$ & \vspace{0.1cm} $(-1)^{n_j+3} \overline{\omega}^{n_j+2}+1$ \\  \cline{3-4}
				& &  \vspace{0.1cm} $\sigmah_{j+n_j+2}^{\star}(4)$ & \vspace{0.1cm} $(-1)^{n_j+2} \overline{\omega}^{n_j+1}+1$ \\ \cline{2-4}
				\hline
				\hline
		\end{tabular}}
		\caption{The list of vertices of the remaining $3$--cells around $e_j$ that are not identified with the endpoints of $e_j$.}
		\label{table:vertices_around_edge_L_2}
	\end{table}
	
	\enlargethispage{\baselineskip}
	
	We remark that for all $k=1,\dots,n_j$, the $3$-cells $\sigmah_{j+k}^{\R}$ and $\sigmah_{j+k}^S$ cover a total angle of $2\pi$ around $e_j$. When $n_j=0$, the total angle around $e_j$ is exactly of $2 \pi$, hence in the general case the ramification order around $e_j$ is $n_j+1$.
\end{proof}


\hfill

\subsection{The holonomy representation}
\label{subsec:holonomy}

It was mentioned in \S\ref{sec:CR_structure_bundles} that every branched CR structure admits a pair of a developing map and holonomy representation, defined up to the action of $\PU(2,1)$. Let $(\dev_f,\hol_f)$ be a representative pair associated to the branched CR structure on $M_f$. Here we summarise few facts about $\hol_f$, referring the reader to the author's PhD thesis for more details and the connection to the work of Fock and Goncharov on positive representations~\cite{Fock-Goncharov-2006}.

The fundamental group of $M_f$ is an HNN extension of the fundamental group of the base once--punctured torus $\ptorus$, namely the free group in two generators $\langle \alpha, \beta \rangle$. It has a standard presentation
$$
\pi_1(M_f) = \langle \alpha,\beta,\tau \ | \ \tau \alpha \tau^{-1} = f_*(\alpha), \ \tau \beta \tau^{-1} = f_*(\beta) \rangle,
$$ 
where $f_* : \langle \alpha, \beta \rangle \rightarrow \langle \alpha, \beta \rangle$ is the automorphism induced by $f$, and $\tau$ is represented by the base circle of the fiber bundle. If $M_f$ has flip sequence $\bw_f = \R^{a_0} \L^{b_0} \dots \R^{a_k} \L^{b_k} \R^c $ (the other case being similar), there is a choice of the class representative $(\dev_f,\hol_f)$ such that
$$
\hol_f(\alpha) = G_4^{-1} G_3, \quad \hol_f(\beta) = G_1^{-1} G_2, \quad \text{ and } \quad \hol_f(\tau) = G_4^{-a_0-c} G_1^{-b_0} \dots G_4^{-a_k} G_1^{-b_k}.
$$
Let $\rho : \pi_1(\ptorus) \rightarrow \PU(2,1)$ be the representation obtained by restricting $\hol_f$ to $\langle \alpha,\beta\rangle$. Then $\rho$ does not depend on $f$, namely it is a representation of $\pi_1(\ptorus)$ that always extends to a representation of $\pi_1(M_f)$. It is irreducible, but not strongly irreducible. Moreover, it is not faithful but it has infinite discrete image. In fact, its image $\rho(\pi_1(\ptorus))$ is a subgroup of the Eisenstein-Picard modular group $\PU(2,1,\ZZ[\omega])$, the subgroup of $\PU(2,1)$ with entries in the set of Eisenstein integers $\ZZ[\omega]$.

The representation $\rho$ was proved to have the above special properties while studying Fock and Goncharov's parametrisation of $\Charx(\ptorus)$, the decorated $\PGL(3,\CC)$--character variety of $\ptorus$. Using the inclusion map $\Hb \hookrightarrow \CP^2 $ together with its first complex jet, one induces a decoration on $\rho$, making its $\PGL(3,\CC)$ conjugacy class $[\rho]$ an element of $\Charx(\ptorus)$. Under this construction, the Fock-Goncharov coordinate of $[\rho]$ is
$$
P = \left( \omega,\omega,\omega,\omega,\omega,\omega,\omega,\omega \right), \quad 
\text{ where } \quad \omega = -\frac{1}{2}\left( 1 + i \sqrt{3}\right).
$$
The point $P$ and its complex conjugate are the only points in Fock-Goncharov moduli space that are fixed by every Fock-Goncharov edge flip.


\makeatletter
\setlength{\@fptop}{0pt}
\makeatother


\subsection*{Acknowledgements}
The material presented here is based on the PhD dissertation of the author, completed at the University of Sydney in $2018$. We heartily thank Stephan Tillmann, the author's PhD supervisor, for introducing him to the topic, the multiple stimulating conversations and unlimited advices. We thank the PhD examiners Jeffrey Danciger, Antonin Guilloux and Joan Porti whose comments helped improve this manuscript. We also thank Sam Ballas and Lorenzo Ruffoni for their useful suggestions on the original draft. Lastly, we acknowledge the support by the Commonwealth of Australia during the author's PhD.


\bibliographystyle{plain}
\bibliography{biblio}
\addcontentsline{toc}{chapter}{References}


\address{Alex Casella,\\ Departement of Mathematics,\\ Florida State University,\\ FL 32303 USA\\
(acasella@fsu.edu)\\https://www.math.fsu.edu/~casella/}

\Addresses


\end{document}